\theoremstyle{plain}
\newtheorem*{conjecture}{Conjecture}
\newtheorem{theorem}{Theorem}
\newtheorem{proposition}{Proposition}
\newtheorem{lemma}{Lemma}
\newtheorem{corollary}{Corollary}
\newtheorem{Definition}{Definition}
\newtheorem{Remark}{Remark}
\numberwithin{equation}{section}
\DeclareMathOperator{\gl}{GL}
\DeclareMathOperator{\so}{SO}
\DeclareMathOperator{\symp}{Sp}
\newcommand{\aaeq}{\underset{\text{\,a.a.}}{=}}
\newcommand{\mA}{\mathbb{A}}
\newcommand{\mC}{\mathbb{C}}
\def\Ddots{\mathinner{\mkern1mu\raise\p@
\vbox{\kern7\p@\hbox{.}}\mkern2mu
\raise4\p@\hbox{.}\mkern2mu\raise7\p@\hbox{.}\mkern1mu}}
\title{Refined global Gross-Prasad conjecture
on special Bessel periods  and
B\"ocherer's conjecture
}
\author{Masaaki Furusawa}
\address[Masaaki Furusawa]{
Department of Mathematics, Graduate School of Science,
              Osaka City University,
         Sugimoto 3-3-138, Sumiyoshi-ku, Osaka 558-8585, Japan
}
\email[Masaaki Furusawa]{furusawa@sci.osaka-cu.ac.jp}
\thanks{The research of the first author was supported in part by 
JSPS KAKENHI Grant Numbers 
JP25400020, 
JP16K05069.}
\author{Kazuki Morimoto}
\address[Kazuki Morimoto]{
Department of Mathematics, Graduate School of Science, 
Kobe University, 1-1 Rokkodai-cho, Nada-ku, Kobe 657-8501, Japan
}
\email[Kazuki Morimoto]{morimoto@math.kobe-u.ac.jp}
\thanks{The research of the second author was supported in part by
JSPS KAKENHI Grant Number JP26800021.}
\date{\today}                                           
\subjclass[2010]{Primary: 11F55, 11F67; Secondary: 11F27, 11F46}
\keywords{B\"ocherer's conjecture, central $L$-values, Gross-Prasad conjecture, periods of automorphic forms}
\dedicatory{To the memory of Joseph~Shalika}
\begin{document}
%
%
%
%
%
%
%
%
%
%
\begin{abstract}
In this paper we pursue the refined global Gross-Prasad conjecture
for  Bessel periods formulated by Yifeng Liu
in the case of special Bessel periods 
for $\so\left(2n+1\right)\times\so\left(2\right)$.
Recall that a Bessel period for $\so\left(2n+1\right)\times\so\left(2\right)$
is called \emph{special}
when the representation of $\so\left(2\right)$ is trivial.
Let $\pi$ be an irreducible cuspidal tempered automorphic 
representation of a special orthogonal group
of an odd dimensional quadratic space
over a totally real number field $F$
whose local component $\pi_v$
at any archimedean place $v$ of $F$ is
a discrete series representation.
Let $E$ be a quadratic extension of $F$ and 
suppose that the special Bessel period corresponding to $E$
does not vanish identically on $\pi$.
Then we prove the Ichino-Ikeda type explicit formula conjectured by Liu
for the central value  $L\left(1/2,\pi\right)L\left(1/2,\pi\times\chi_E\right)$,
where $\chi_E$ denotes the quadratic character corresponding to $E$.
Our result yields a proof of B\"ocherer's conjecture on
holomorphic Siegel cusp forms of degree two which are Hecke eigenforms.
\end{abstract}
%
%
%
\maketitle
%
%
%
%
%
%
\section{Introduction}
Research on special values of arithmetic $L$-functions
is one of the pivotal subjects in number theory.
The central values are of particular interest because of the Birch
and Swinnerton-Dyer conjecture and its natural generalizations.

In the early 1990s, Gross and Prasad~\cite{GP1,GP2} proclaimed 
a conjecture concerning a relationship between 
non-vanishing of certain period integrals on special orthogonal
groups and non-vanishing of central values of certain tensor product
$L$-functions, together with the local counterpart conjecture.
Recently Gan, Gross and Prasad~\cite{GGP} extended the conjecture
to classical groups and metaplectic groups.
On the other hand, Ichino and Ikeda, 
in their very influential paper~\cite{II},  
refined the Gross-Prasad conjecture and formulated a conjectural precise formula 
for the central $L$-value in terms of the period integral 
for tempered cuspidal automorphic representation in
the $\mathrm{SO}\left(n+1\right)\times\mathrm{SO}\left(n\right)$
case, i.e. co-dimension $1$ case.
Inspired by \cite{II}, Harris~\cite{H} formulated a similar conjectural formula
in the  co-dimension $1$ unitary group case.
Recently Liu~\cite{Liu} extended the work of Ichino-Ikeda
and Harris
to  Bessel periods for  orthogonal and unitary groups
and formulated a conjectural  precise formula expressing the central $L$-values
in terms of the Bessel periods in the arbitrary co-dimension case.
%

In our previous paper~\cite{FM0},
we investigated the Gross-Prasad conjecture
for the special Bessel periods on $\mathrm{SO}\left(2n+1\right)\times
\mathrm{SO}\left(2\right)$
and proved that the non-vanishing of the period implies
the non-vanishing of the corresponding central $L$-value.
In this paper, we refine the results in  \cite{FM0}
and prove the Ichino-Ikeda type precise $L$-value formula conjectured
by Liu~\cite{Liu} in the aforementioned case.
As a corollary, we also obtain a proof of the long-standing
conjecture by
B\"ocherer in \cite{Bo}, concerning
central critical values of imaginary quadratic twists
of spinor $L$-functions for holomorphic Siegel cusp forms
of degree two which are Hecke eigenforms, thanks to
the beautiful work by Dickson, Pitale, Saha and Schmidt~\cite{DPSS}.
%
%
%
%

In order to state our main results, 
let us introduce notation.
For the convenience of the reader, we shall 
use as much as possible 
the notation  in \cite{FM0}, to which
this paper is a sequel.
%
%
%
\subsection{Notation}\label{ss: notation}
%
Let $F$ be a number field and $\mathbb A_F$ its ring of adeles.
We shall often abbreviate $\mathbb A_F$ as $\mathbb A$
for simplicity.
Let $\psi$ be a non-trivial  character of $\mA$ which is trivial on $F$.
For $a\in F^\times$,  we denote
by $\psi^a$ the character of $\mA$ 
defined by $\psi^a(x) = \psi(ax)$.
For a place $v$ of $F$, let $F_v$ be the completion of $F$ at $v$
and $\psi_v$ the character of $F_v$ induced by $\psi$.
When $v$ is non-archimedean, we write by $\mathcal O_v$ and
$\varpi_v$,
the ring of integers in $F_v$ and
a prime element of $F_v$, respectively.

Let $E$  be a quadratic extension field 
of $F$ and $\chi_E$ 
 the quadratic character of $\mA_F^\times \slash F^\times$
corresponding to $E$.
Throughout the paper, we fix $E$. 
We simply write $\chi$ for $\chi_E$ when there is no fear of confusion.

For a positive integer $n\ge 2$, let $\mathcal G_n=\mathcal G_{n, E}$
denote a certain set of $F$-isomorphism classes of special
orthogonal groups defined as follows.
Let $\left(V, \left(\,\,,\,\,\right)\right)$ be a quadratic space
over $F$, i.e. a finite dimensional vector space over $F$ equipped with
a non-degenerate symmetric bilinear form $\left(\,\,,\,\,\right)$.
We suppose that $\dim V=2n+1$, the Witt index of $V$ is at least $n-1$
and $V$ has an orthogonal direct sum decomposition
$V=\mathbb H^{\,n-1}\oplus L$ 
where $\mathbb H$ denotes the hyperbolic plane
over $F$ and $L$ is a three dimensional quadratic 
space containing  $\left(E,c \cdot \mathrm{N}_{E\slash F}\right)$
for some $c \in F^\times$.
Then we define $\mathcal G_n$ as the set of $F$-isomorphism classes
of the special orthogonal groups $\mathrm{SO}\left(V\right)$
for such $V$.
Let $\operatorname{disc}\left(V\right)$ 
denote the discriminant of $\left(V,(\,\,,\,\,)\right)$
which takes a value in $F^\times\slash \left(F^\times\right)^2$.
We often denote the
quadratic space $\left(V, b\cdot
\left(\,\,,\,\,\right)\right)$ simply as $b\, V$.
We note that then $\operatorname{disc}\left(b\,V\right)=b\cdot\operatorname{disc}
\left(V\right)\in F^\times\slash \left(F^\times\right)^2$ and 
$\mathrm{SO}\left(b\,V\right)=\mathrm{SO}\left(V\right)$.
Thus from now on we shall assume 
$\operatorname{disc}\left(V\right)=\left(-1\right)^{n}$,
i.e. $\operatorname{disc}\left(L\right)=-1$,
without loss of generality.
We shall often identify the group $\mathrm{SO}\left(V\right)$ with
its isomorphism class in $\mathcal G_n$ by abuse of notation.
Let us denote by $\mathbb V=\mathbb V_n$ 
such a quadratic space with 
$\dim \mathbb{V} = 2n+1$ and the Witt index $n$, which is uniquely determined
up to a scalar multiplication, and
we write  its special orthogonal group
$\mathrm{SO}\left(\mathbb V\right)$
(and its $F$-isomorphism class)
by $\mathbb G=\mathbb G_n$. 
We note that  $\mathbb G$ splits over $F$.

Throughout the paper, for an algebraic group $\mathbf G$ 
defined over $F$, we write $\mathbf G_v$ for $\mathbf G\left(F_v\right)$
and
we always take the measure $dg$ on $\mathbf G\left(\mA\right)$ to
be the Tamagawa measure, 
unless specified otherwise.
For each $v$, we take the self-dual measure with respect to $\psi_v$
on $F_v$.
Then recall that the product measure on $\mA$ is the self-dual measure
with respect to $\psi$ and  is also the Tamagawa measure since $\mathrm{Vol}\left(
\mA\slash F\right)=1$.
For a unipotent algebraic group $\mathbf U$ defined over $F$,
we also specify the local measure $du_v$ on $\mathbf{U}_v$
to be the measure corresponding to the gauge form defined over $F$, 
together with our choice of the measure on $F_v$,
at each place $v$ of $F$.
Then for $du=\prod_v du_v$, we have
$\mathrm{Vol}\left(\mathbf U\left(F\right)\backslash
\mathbf U\left(\mA\right), du\right)=1$
and 
$du$ is the Tamagawa measure on $\mathbf U\left(\mathbb A\right)$.
%
%
%
\subsection{Special Bessel periods}
\label{ss:Special Bessel periods}
Let $G=\mathrm{SO}\left(V\right)\in\mathcal G$.
First we decompose $V$ as a direct sum
$
V=X^+\oplus L\oplus X^-
$
where $X^\pm$ are totally isotropic $\left(n-1\right)$-dimensional
subspaces of $V$ which are dual to each other and orthogonal to $L$.
When $G=\mathbb G$, i.e. $V=\mathbb V$, we extend $X^+$ to $V^+$
and $X^-$ to $V^-$ respectively so that $V^\pm$ are totally isotropic
$n$-dimensional subspaces of $\mathbb V$ which are dual to each other.
We take a basis $\left\{e_1,\cdots, e_{n-1}\right\}$ of $X^+$
and a basis $\left\{e_{-1},\cdots , e_{-n+1}\right\}$ of $X^-$
respectively so that
\begin{equation}\label{e: kronecker}
\left(e_i,e_{-j}\right)=\delta_{i,j}
\quad
\text{for $1\le i,j\le n-1$},
\end{equation}
where $\delta_{i,j}$ denotes Kronecker's delta.
When $V=\mathbb V$, we take $e_n\in V^+$ and
$e_{-n}\in V^-$ respectively so that \eqref{e: kronecker} holds
for $1\le i,j\le n$.
We also  fix a basis of $L$.
When $V=\mathbb V$, 
we take it  to be of the form
$\left\{e_{-n},e,e_{n}\right\}$
where $e$ is a vector in $L$ orthogonal to
$e_{-n}$ and $e_n$ with $\left(e,e\right)=1$.
Then for a matrix representation of elements of $G$, 
as a basis of $V$, we employ
\[
e_{-1},\cdots , e_{-n+1},\,
\text{basis of $L$}, \,e_{n-1},\cdots, e_1.
\]

We denote by  $P^\prime$ the maximal parabolic subgroup of $G$
defined as the stabilizer of
the isotropic subspace $X^-$.
Let 
\begin{equation}\label{e: P prime}
P^\prime=M^\prime S^\prime
\end{equation}
be the Levi decomposition where $M^\prime$ and $S^\prime$ denote the Levi part
and the unipotent part of $P^\prime$ respectively.
Let us take $\lambda\in F^\times$ so that 
$E=F\left(\sqrt{\lambda}\,\right)$.
Since $L$ contains the quadratic space $\left(E,c\cdot \mathrm{N}_{E\slash F}\right)$
and $\mathrm{disc}\left(L\right)=-1$,
we may take $e_\lambda\in L\left(F\right)$ such that 
$\left(e_\lambda,e_\lambda\right)=\lambda$ and 
we fix it once and for all.
Then there is a homomorphism from $S^\prime$ to $\mathbb G_a$ 
defined by
\[
\begin{pmatrix} 1_{n-1}&A&B\\ 0&1_3&A^\prime\\0 &0&1_{n-1}\end{pmatrix}
\mapsto \left(Ae_\lambda, e_{n-1}\right),
\]
where we regard $A$ as an element of $\mathrm{Hom}\left(L,X^-\right)$
and $\left(\,\, ,\,\,\right)$ is the symmetric bilinear form on $V$,
and its stabilizer in the Levi component $M^\prime$
is given by
\[
\left\{ \begin{pmatrix} p&0&0\\ 0&h&0\\ 0&0&p^\ast \end{pmatrix} : p \in \mathcal{P}_{n-1}, \, h \in \mathrm{SO}(L), h e_\lambda = e_\lambda \right\}
\]
where $\mathcal{P}_{n-1}$ denotes the mirabolic subgroup of $\mathrm{GL}_{n-1}$, i.e.
\[
\mathcal{P}_{n-1} = \left\{ \begin{pmatrix}\alpha &u\\0 &1 \end{pmatrix} : \alpha \in \mathrm{GL}_{n-2}, u \in \mathbb{G}_a^{n-2} \right\},
\]
and $p^\ast=J_{n-1}{}^tp^{-1}J_{n-1}$.
Here $J_r$ denotes the $r\times r$ matrix with ones on the sinister diagonal, zeros
elsewhere.
Let $U_{n-1}$ denote the group of upper unipotent matrices in
$\mathrm{GL}_{n-1}$.
We define $\check{u}\in M^\prime$ 
for $u\in U_{n-1}$ by
\begin{equation}\label{e: u check}
\check{u}=\begin{pmatrix}u&0&0\\0&1_3&0\\0&0&u^\ast\end{pmatrix}
\end{equation}
and let $S$ be a unipotent subgroup of $P^\prime$ defined by
\begin{equation}\label{e:s}
S:=S^\prime S^{\prime\prime}\quad
\text{where $S^{\prime\prime}=
\left\{\check{u}: u\in U_{n-1}\right\}$}.
\end{equation}
Let us define a subgroup $D_{\lambda}$ of $M^\prime$ by
\[
D_{\lambda} := \left\{ 
\begin{pmatrix} 1_{n-1}&0&0\\ 0&h&0\\ 0&0&1_{n-1}\end{pmatrix} : h \in \mathrm{SO}(L), \,h e_\lambda = e_\lambda \right\}
\simeq \mathrm{SO}\left(E\right)
\simeq  E^\times \slash F^\times.
\]
\begin{Definition}\label{d: Bessel}
The Bessel subgroup $R_\lambda$ of $G$ is defined by
\[
R_\lambda:=D_\lambda \,S
\]
and we define a character $\chi_\lambda$ of $R_\lambda\left(\mathbb A\right)$
by setting
$\chi_\lambda\left(t\right):=1$ for $t\in D_\lambda\left(\mathbb A\right)$
and
\begin{equation}\label{e: character identity}
\chi_\lambda\left(s^\prime \check{u}\right)
=\psi\left(\left(Ae_\lambda, e_{n-1}\right)\right)
\,\psi\left(u_{1,2}+\cdots+u_{n-2,n-1}\right)
\end{equation}
for  
\[
s^\prime=
\begin{pmatrix} 1_{n-1}&A&B\\ 0&1_3&A^\prime\\ 0&0&1_{n-1}\end{pmatrix}
\in S^\prime\left(\mathbb A\right)\quad\text{and}\quad
u=\left(u_{i,j}\right)\in U_{n-1}\left(\mathbb A\right).
\]

Then for an automorphic form $\phi$ on $G\left(\mathbb A\right)$,
its special Bessel period of type $E$ is defined by
\[
B_{\lambda, \psi}\left(\phi\right)
=\int_{R_\lambda\left(F\right)\backslash R_\lambda\left(\mathbb A\right)}
\phi\left(r\right)\chi_\lambda\left(r\right)^{-1}\, dr.
\]
\end{Definition}
We refer to \cite[(5)]{FM0} for the dependency of
$B_{\lambda, \psi}$ on the choice of $\lambda$ and $e_\lambda$.
%
%
%
%
%
%
%
%
\subsection{Refined Gan-Gross-Prasad conjecture by Liu
in our case}
\label{Refined Gan-Gross-Prasad conjecture by Liu
in our case}
%
Let $\pi$ be an irreducible \emph{tempered} cuspidal automorphic representation
of $G\left(\mathbb A\right)$ for $G\in\mathcal G$
and $V_\pi$ its space of automorphic forms.

Let $\langle\, \,,\,\,\rangle$ denote the $G\left(\mathbb A\right)$-invariant
Hermitian inner product on $V_\pi$
given by the Petersson inner product, i.e.
\[
\langle\phi_1,\phi_2\rangle=
\int_{G\left(F\right)\backslash G\left(\mathbb A\right)}
\phi_1\left(g\right)\overline{\phi_2\left(g\right)}\,
dg
\quad\text{for $\phi_1,\phi_2\in V_\pi$.}
\]
Since $\pi=\otimes_v \,\pi_v$ where $\pi_v$ is unitary,
we may also choose a $G_v$-invariant Hermitian
inner product $\langle\,\,,\,\,\rangle_v$ on the space $V_{\pi_v}$ of $\pi_v$
  for each place $v$
so that 
 \[
 \langle\phi_1 ,\phi_2\rangle=\prod_v\langle\phi_{1,v} ,\phi_{2,v}\rangle_v
 \]
for any decomposable vectors $\phi_1=\otimes_v\,\phi_{1,v}$ and 
$\phi_2=\otimes_v\,\phi_{2,v}\in V_\pi$.

We choose a local Haar measure 
$dg_v$ on $G_v$ for each place $v$ of $F$
so that $\mathrm{Vol}\left(K_{G,v},dg_v\right)=1$
at almost all  $v$, where
$K_{G,v}$ is a maximal compact subgroup of $G_v$.
Let us also choose a local Haar measure $dt_v$
on $D_{\lambda,v}=D_\lambda\left(F_v\right)$ at each place $v$ of $F$
so that $\mathrm{Vol}\left(K_{\lambda,v},dt_v\right)=1$
at almost all $v$, where $K_{\lambda,v}$
is a maximal compact subgroup of $D_{\lambda,v}$.
We define positive constants $C_G$ and $C_\lambda$,
called Haar measure constants in \cite{II},  by
%
%
%
\begin{equation}\label{e: measure comparison}
dg=C_G\cdot\prod_v \,dg_v\quad\text{and}\quad
dt=C_\lambda\cdot \prod_v\, dt_v
\end{equation}
respectively.
Here we recall that $dg$ and $dt$ are the Tamagawa measures
on $G\left(\mA\right)$ and $D_\lambda\left(\mA\right)$,
respectively.
%
%
\subsubsection{Local integral}
At each place $v$ of $F$, a local integral
$\alpha_v\left(\phi_v,\phi_v^\prime\right)$ for 
$\phi_v,\,\phi_v^\prime\in V_{\pi_v}$ is defined as follows.
%

First suppose that $v$ is non-archimedean.
Let us first recall the definition of a \emph{stable integral} by 
Lapid and Mao~\cite[Definition~2.1,Remark~2.2]{LM0}. 
%
\begin{Definition}\label{d: stable integral}
Let $U$ be a unipotent group over $F_v$ and $f$ a locally constant function on $U$.
We say that \emph{$f$ has a stable integral over $U$} if 
there exists a compact open subgroup $N$ of $U$ such that for 
any compact open subgroup $N^\prime$ of $U$ containing  $N$ we have 
\[
\int_{N^\prime} f(u) \, du = \int_{N} f(u) \, du.
\]
Then
we denote this common value by $\displaystyle{\int_{U}^{\mathrm{st}} f(u) \, du}$
and say that the integral  stabilizes at $N$.
\end{Definition}
%
\begin{Remark}\label{r: stable integral}
Note that if $f\in L^1\left(U\right)$ and $f$ has a stable integral over $U$,
then we have 
\[
\displaystyle{\int_{U} f(u) \, du=\int_U^{\mathrm{st}}f\left(u\right)\, du}.
\]
\end{Remark}
%
\begin{Definition}\label{alpha}
For a non-archimedean place  $v$, 
we  define $\alpha_v\left(\phi_v,\phi^\prime_v\right)$ for $\phi_v,\phi_v^\prime\in V_{\pi_v}$
by
\begin{equation}\label{e: local integral 1}
\alpha_v\left(\phi_v,\phi_v^\prime\right)
:=
\int_{D_{\lambda, v}}\int_{S_v}^{\mathrm{st}}
\langle\pi_v\left(s_vt_v\right)\phi_v,\phi_v^\prime\rangle_v\,\,
\chi_\lambda\left(s_v\right)^{-1}\, ds_v\,dt_v.
\end{equation}
\end{Definition}
Indeed it is shown in Liu~\cite{Liu} that for any $t_v\in D_{\lambda, v}$
the inner integral of \eqref{e: local integral 1} stabilizes at a certain open compact subgroup of $S_v$
 \cite[Proposition~3.1]{Liu} and 
the outer integral of \eqref{e: local integral 1} converges \cite[Theorem~2.1]{Liu}.
We note that the well-definedness of \eqref{e: local integral 1} is also shown in Waldspurger~\cite[Section~5.1, Lemme]{Wa2}.
%

Now suppose that $v$ is archimedean.
\begin{Definition}\label{d: archimedean}
For an archimedean place $v$, we define
$\alpha_v\left(\phi_v,\phi^\prime_v\right)$
by  a regularized integral whose regularization is achieved using
the Fourier transform Liu~\cite[3.4]{Liu}.
We refer to \cite[3.4]{Liu} for the details.
\end{Definition}
%
\begin{Remark}\label{r: archimedean}
It is  shown in Liu~\cite[Proposition~3.5]{Liu}
that for \emph{any} place $v$ where $\pi_v$ is square integrable,
the local integral 
\begin{equation}\label{e: local integral 2}
\int_{D_{\lambda,v}}\int_{S_v}
\langle\pi_v\left(s_vt_v\right)\phi_v,\phi_v^\prime\rangle_v\,
\chi_\lambda\left(s_v\right)^{-1}\,ds_v\,dt_v
\end{equation}
does converge absolutely and is equal to $\alpha_v\left(\phi_v,\phi^\prime_v\right)$
defined as above.
We note that later we are only concerned with the case when $\pi_v$ is a discrete series
representation at any archimedean place $v$.
\end{Remark}
%
We recall that the multiplicity one property, i.e.
\begin{equation}
\label{uniqueness}
\dim_{\mathbb C}\operatorname{Hom}_{R_{\lambda,v}}
\left(\pi_v,\chi_{\lambda, v}\right)\le 1,
\end{equation}
holds at \emph{any} place $v$.
As for the proof, we refer to  Gan, Gross and Prasad~\cite[Corollary~15.3]{GGP}
and Jiang, Sun and Zhu~\cite[Theorem~A]{JSZ}
for the non-archimedean case and the archimedean case, respectively.

Moreover when $v$ is non-archimedean, it is shown that 
\begin{multline}\label{e: multiplicity one}
\dim_{\mathbb C}\operatorname{Hom}_{R_{\lambda,v}}
\left(\pi_v,\chi_{\lambda, v}\right)= 1
\\
\Longleftrightarrow
\text{$\alpha_v\left(\phi_v,\phi_v^\prime\right)\ne 0$
for some $\phi_v,\,\phi_v^\prime\in V_v$ which are $K_{G,v}$-finite}
\end{multline}
by 
Waldspurger~\cite[Proposition~5.7]{Wa2}.
It is expected that the equivalence \eqref{e: multiplicity one} holds
also when $v$ is archimedean.
Indeed Beuzzart-Plessis~\cite{BP} proved the corresponding assertion
in the unitary group case
for tempered representations.
We also note that the condition on the right hand side
of \eqref{e: multiplicity one} is equivalent to:
\begin{equation}\label{e: multiplicity one2}
\text{$\alpha_v\left(\phi_v,\phi_v\right)\ne 0$
for some $K_{G,v}$-finite vector
$\phi_v\in V_v$.}
\end{equation}
Indeed $\alpha_v\left(\phi_v,\phi_v^\prime\right)\ne 0$ implies
that the two linear forms $L$ and $L^\prime$ on $V_{\pi_v}$
 defined by
$L\left(\varphi_v\right)=\alpha_v\left(\varphi_v,\phi_v^\prime\right)$
and $L^\prime\left(\varphi_v\right)=
\overline{\alpha_v\left(\phi_v,\varphi_v\right)}$
for $\varphi_v\in V_{\pi_v}$, respectively,
are  non-zero elements of 
$\operatorname{Hom}_{R_{\lambda,v}}
\left(\pi_v,\chi_{\lambda, v}\right)$.
By \eqref{uniqueness}  there exists $c\in \mathbb C^\times$ such that 
$L^\prime=c\cdot L$.
Thus $L^\prime\left(\phi_v\right)=c\cdot L\left(\phi_v\right)=c\cdot
\alpha_v\left(\phi_v,\phi_v^\prime\right)\ne 0$ and hence 
$\alpha_v\left(\phi_v,\phi_v\right)\ne 0$.
%
%
%
%
\subsubsection{Normalization of local integrals}
\label{section: normalization}
We fix maximal compact subgroups $K_G=\prod_v K_{G,v}$ 
of $G\left(\mA\right)$ and $K_\lambda=\prod_v K_{\lambda,v}$
of $D_\lambda\left(\mA\right)$.

A place $v$ is called \emph{good}
(with respect to $\pi$ and a decomposable vector $\phi=\otimes_v\,\phi_v
\in V_\pi=\otimes_v\, V_{\pi_v}$)
if:
%
\begin{subequations}\label{e: good place}
\begin{equation}
\text{$v$ is non-archimedean and is not lying over $2$};
\end{equation}
\begin{equation}
\text{$K_{G,v}$ is a hyperspecial maximal compact subgroup of $G_v$};
\end{equation}
\begin{equation}
\text{$E_v$ is an unramified quadratic extension of $F_v$
or $E_v=F_v\oplus F_v$};
\end{equation}
\begin{equation}
\text{$\pi_v$ is an unramified representation of $G_v$};
\end{equation}
\begin{equation}
\text{$\phi_v$ is a $K_{G,v}$-fixed vector such that $\langle\phi_v,
\phi_v\rangle_v=1$ and $\chi_{\lambda, v}$ is $K_{\lambda, v}$-fixed};
\end{equation}
\begin{equation}
\text{$K_{\lambda, v}\subset K_{G,v}$
and $\mathrm{Vol}\left(K_{G,v},dg_v\right)=\mathrm{Vol}\left(K_{\lambda, v},
dt_v\right)=1$}.
\end{equation}
\end{subequations}
%

Then Liu's theorem~\cite[Theorem~2.2]{Liu} states that when
$v$ is good, one has
\begin{equation}\label{e: liu's theorem}
\alpha_v\left(\phi_v,\phi_v\right)=
\frac{L\left(1/2,\pi_v\right)L\left(1/2,\pi_v\times \chi_{E,v}\right)
\,\prod_{j=1}^n\zeta_{F_v}\left(2j\right)}{
L\left(1,\pi_v, \mathrm{Ad}\right)
L\left(1,\chi_{E,v}\right)}.
\end{equation}
%
\begin{Definition}\label{d: normalized}
We define the normalized  local integral 
$\alpha_v^\natural\left(\phi_v,\phi^\prime_v\right)$
at each place $v$ of $F$
by
\begin{equation}\label{e: normalized}
\alpha_v^\natural\left(\phi_v,\phi^\prime_v\right):=
\frac{L\left(1,\pi_v, \mathrm{Ad}\right)
L\left(1,\chi_{E,v}\right)}{L\left(1/2,\pi_v\right)L\left(1/2,\pi_v\times \chi_{E,v}\right)
\,\prod_{j=1}^n\zeta_{F_v}\left(2j\right)}
\cdot
\alpha_v\left(\phi_v,\phi_v^\prime\right).
\end{equation}

We shall often use the notation
\begin{equation}\label{e: abbreviation}
\alpha_v\left(\phi_v\right):=\alpha_v\left(\phi_v,\phi_v\right)
\quad\text{and}\quad
\alpha_v^\natural\left(\phi_v\right):=\alpha_v^\natural\left(\phi_v,\phi_v\right).
\end{equation}
\end{Definition}
%
\begin{Remark}\label{r: normalized}
Recall that $\zeta_{\mathbb R}\left(s\right)=\pi^{-s\slash 2}\,\Gamma\left(s\slash 2\right)$
and $\zeta_{\mathbb C}\left(s\right)=\left(2\pi\right)^{1-s}\,\Gamma\left(s\right)$.
Here we note that $L\left(s,\pi\right)$ and $L\left(s,\pi\times\chi_{E}\right)$
are defined by the doubling method as in  Lapid and Rallis~\cite{LR}
and  are holomorphic for $\mathrm{Re}\left(s\right)>0$
by Yamana~\cite{Ya}
since $\pi$ is tempered.
It is believed that $L\left(s,\pi,\mathrm{Ad}\right)$
can be analytically continued to the whole $s$-plane,
is holomorphic for $\mathrm{Re}\left(s\right)>0$
and $L\left(1,\pi,\mathrm{Ad}\right)$ is non-zero when
$\pi$ is tempered.
\end{Remark}
%
%
%
%
%
\subsubsection{Refined global Gross-Prasad conjecture on $B_{\lambda, \psi}$}
As in Ichino and Ikeda~\cite{II},
we say that $\pi=\otimes_v\pi_v$ is \emph{almost locally generic}
if the local representation $\pi_v$ is generic at almost all places $v$ of $F$.
Then as explained in \cite[Section~2]{II}, 
such $\pi$ is conjectured to  come from an elliptic Arthur parameter
\[
\Psi\left(\pi\right):\mathcal L_F\to {}^LG:= \hat{G}\rtimes W_F.
\]
Here $\mathcal L_F$ denotes the conjectural Langlands group of $F$
and ${}^LG$ is the Langlands dual group of $G$.
The local representation $\pi_v$ is expected to  be tempered at every $v$
by the generalized Ramanujan conjecture.
Let $\mathcal S\left(\Psi\left(\pi\right)\right)$
 be the centralizer of the image of the Arthur parameter
 $\Psi\left(\pi\right)$ in the complex dual group $\hat{G}$.
For $G\in\mathcal G$, $\mathcal S\left(\Psi\left(\pi\right)\right)$
is a finite elementary $2$-group.
We refer to \cite[2.5]{II} for the details.

The conjecture formulated by Liu~\cite[Conjecture~2.5]{Liu} 
reads as follows, in our case.
%
\begin{conjecture}
Let $\pi=\otimes_v\pi_v$ be an irreducible cuspidal automorphic representation
of $G\left(\mA\right)$ for $G\in\mathcal G$.
Suppose that $\pi$ is 
almost locally generic.
\begin{enumerate}
\item
We have
$\dim_{\mathbb C}\operatorname{Hom}_{R_{\lambda,v}}
\left(\pi_v,\chi_{\lambda, v}\right)= 1$ if and only if
$\alpha_v\left(\phi_v^\prime\right)\ne  0$
for some $K_{G,v}$-finite vector $\phi_v^\prime\in V_{\pi_v}$.
\item
For any non-zero decomposable cusp form
 $\phi=\otimes_v\,\phi_v\in V_\pi$, we have
 \begin{multline}\label{e: Liu's conjecture}
 \frac{\left|B_{\lambda,\psi}\left(\phi\right)\right|^2}{
 \langle\phi,\phi\rangle}
 =\frac{C_\lambda}{\left|\mathcal S\left(\Psi\left(\pi\right)\right)\right|}
 \cdot
 \left(\prod_{j=1}^n\zeta_F\left(2j\right)\right)
 \\
 \times
 \frac{L\left(1/2,\pi\right)L\left(1/2,\pi\times\chi_E\right)
 }{
 L\left(1,\pi,\mathrm{Ad}\right)L\left(1,\chi_E\right)}
\cdot
\prod_v
 \frac{\alpha_v^\natural\left(\phi_v\right)}{
 \langle\phi_v,\phi_v\rangle_v}
 \end{multline}
 where the product is indeed over the finite set of places $v$ of $F$
 which are not good in the sense of \eqref{e: good place}.
 Here   all $L$-functions in \eqref{e: Liu's conjecture}
 denote the completed $L$-functions.
 In particular $\zeta_F\left(s\right)$ denotes the completed Dedekind zeta function of $F$, i.e.
 \begin{equation}\label{e: dedekind}
 \zeta_F\left(s\right)=\prod_{\text{$v:$ place of $F$}}\zeta_{F_v}\left(s\right).
 \end{equation}
  \end{enumerate}
\end{conjecture}
 \begin{Remark}
 When $n=2$ and $G=\mathbb G$, Liu~\cite{Liu},
 inspired by Prasad and Takloo-Bighash~\cite{PT},
  proved
 \eqref{e: Liu's conjecture}
 for endoscopic Yoshida lifts and 
 Corbett~\cite{Co} recently proved it for non-endoscopic Yoshida lifts. 
 We  mention that Qiu~\cite{Qi}  considered a non-tempered case when $n=2$, 
 namely the Saito-Kurokawa lifting case.
 We also mention that 
  Murase and Narita~\cite{MN} proved an explicit formula
 for the central $L$-values in terms of the Bessel periods
 for Arakawa lifts
 when $n=2$ and $G$ is not split.
 \end{Remark}
%
%
%
\subsection{Main Theorem}\label{ss: main theorem}
We say that an irreducible cuspidal tempered automorphic representation $\pi=\otimes_v
\,\pi_v$
 of $G\left(\mA\right)$ for $G\in\mathcal G$
 has a \emph{weak lift}  to $\gl_{2n}\left(\mA\right)$
 if there exists an irreducible automorphic representation
 $\Pi=\otimes_v\,\Pi_v$ of $\gl_{2n}\left(\mA\right)$
 such that $\Pi_v$ is a local Langlands lift of $\pi_v$
at almost all non-archimedean places and
all archimedean places.
 If such $\Pi$ exists,  it is unique by the classification theorem of Jacquet and Shalika~\cite[(4.4)]{JS}
 and  is written as
an isobaric sum
\begin{equation}\label{e: isobaric}
\Pi=\boxplus_{i=1}^l\,\pi_i
\end{equation}
where $\pi_i$ is an irreducible cuspidal automorphic representation of 
$\gl_{2n_i}\left(\mA\right)$ such that:
\begin{equation*}\label{e: weak lift-a}
\text{$L\left(s,\pi_i, \wedge^2\right)$
has a pole at $s=1$},\quad\sum_{i=1}^l n_i=n,
\quad\text{$\pi_i\not\simeq\pi_j$ for $i\ne j$}.
\end{equation*}
%
When $G=\mathbb G$, the existence of 
a weak lift  is guaranteed by 
Arthur~\cite[Theorem~1.5.2]{Ar}. 
 
Our aim in this paper is to prove the following theorem.
%
%
%
%
\begin{theorem}\label{t: main theorem}
Let $F$ be a totally real number field
and $\pi=\otimes_v\,\pi_v$  an irreducible cuspidal tempered automorphic representation
 of $G\left(\mA\right)$ for $G\in\mathcal G$
 such that $\pi_v$ is a discrete series representation
 at any archimedean place $v$ of $F$.
 
 Suppose that the special Bessel period $B_{\lambda, \psi}$
 of type $E$ does not vanish identically on the space of cusp forms
 $V_\pi$ for $\pi$.
 Let $\Pi$ be a weak lift of $\pi$ to $\gl_{2n}\left(\mA\right)$,
which is written 
of the form ~\eqref{e: isobaric}.

Then the following assertions hold.
 %
 \begin{enumerate}
 \item
 At each place $v$, there exists a $K_{G,v}$-finite vector $\phi_v^\prime
 \in V_{\pi_v}$ such that $\alpha_v\left(\phi_v^\prime\right)\ne 0$.
 \item
 For any non-zero decomposable cusp form 
 $\phi=\otimes_v\,\phi_v\in V_\pi$, we have
 %
 \begin{multline}\label{e: main identity}
 \frac{\left|B_{\lambda,\psi}\left(\phi\right)\right|^2}{
 \langle\phi,\phi\rangle}
 =2^{-l}\,C_\lambda
 \cdot
 \left(\prod_{j=1}^n\zeta_F\left(2j\right)\right)
 \\
 \times
 \frac{L\left(1/2,\pi\right)L\left(1/2,\pi\times\chi_E\right)
 }{
 L\left(1,\pi,\mathrm{Ad}\right)L\left(1,\chi_E\right)}
 \cdot\prod_v
 \frac{\alpha_v^\natural\left(\phi_v\right)}{
 \langle\phi_v,\phi_v\rangle_v}.
 \end{multline}
 \end{enumerate}
 %
 %
 Here $L\left(s,\pi,\mathrm{Ad}\right)$ is defined as
 $L\left(s,\pi, \mathrm{Ad}\right)=\prod_v
 L\left(s,\pi_v,\mathrm{Ad}\right)$ where
 \begin{equation}\label{sym=ad}
 L\left(s,\pi_v,\mathrm{Ad}\right):=L\left(s,\Pi_v,\mathrm{Sym}^2\right)
 \end{equation}
 for each place $v$ and $\mathrm{Sym}^2$ denotes
 the symmetric square representation of 
 $\gl_{2n}\left(\mathbb C\right)$.
 \end{theorem}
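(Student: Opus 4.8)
\emph{Overall strategy.} The plan is to prove \eqref{e: main identity} by realizing the central value $L\!\left(1/2,\pi\right)L\!\left(1/2,\pi\times\chi_E\right)=L\!\left(1/2,\pi\times(\mathbf 1\boxplus\chi_E)\right)$ through a global integral representation in which the special Bessel period $B_{\lambda,\psi}$ appears upon unfolding, and then to identify its Euler factorization at the centre with the normalized local integrals $\alpha_v^\natural$ of \eqref{e: normalized}. Concretely I would attach to $\phi$ and a family of Eisenstein sections $f_s$ on an auxiliary group $\widetilde G\supset G$ a global zeta integral $Z(s,\phi,f_s)$ whose unfolding expresses it in terms of $B_{\lambda,\psi}(\phi)$ integrated against a local datum built from $f_s$; by Liu's local computation \eqref{e: liu's theorem} the local zeta integral at every good place equals the ratio of local $L$-factors occurring there, so $Z(s,\phi,f_s)$ equals a ratio of completed $L$-functions times a finite product of ramified and archimedean local integrals. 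On the period side, using the cuspidality of $\phi$ one unfolds $\lvert B_{\lambda,\psi}(\phi)\rvert^2$ into a regularized global integral of the matrix coefficient $g\mapsto\langle\pi(g)\phi,\phi\rangle$ over $R_\lambda(\mathbb A)$ twisted by $\chi_\lambda^{-1}$; the analytic subtleties caused by the character on the unipotent radical $S$ and by non-absolute convergence are precisely what Liu's stable and regularized local integrals \eqref{e: local integral 1}, \eqref{e: local integral 2} are designed to handle. Matching the two sides yields \eqref{e: main identity} once the global constant is pinned down; here the passage from partial to completed $L$-functions is justified by the holomorphy of $L(s,\pi)$ and $L(s,\pi\times\chi_E)$ for $\mathrm{Re}(s)>0$ (Yamana~\cite{Ya}, valid since $\pi$ is tempered) together with the archimedean theory below, and their non-vanishing at $s=1/2$, forced by the hypothesis $B_{\lambda,\psi}\not\equiv 0$, is exactly the input from~\cite{FM0}.

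\emph{The constant $2^{-l}$.} The most delicate point is the precise value of the global constant, and this is where the weak lift $\Pi=\boxplus_{i=1}^l\pi_i$ of \eqref{e: isobaric} enters. Since each $\pi_i$ is symplectic, $L(s,\Pi,\wedge^2)$ has a pole of order exactly $l$ at $s=1$, and the factor $2^{-l}$ should emerge from the corresponding $l$-fold residue in a Siegel--Weil / constant-term analysis of the Eisenstein series $E(\cdot,s;f_s)$ — equivalently, from the normalization of the $l$ theta integrals indexed by the blocks of $\Pi$. In carrying this out it is convenient to establish \eqref{e: main identity} first for the split group $\mathbb G\in\mathcal G$ and then transfer it to an arbitrary $G\in\mathcal G$ carrying the period, using the Gross--Prasad dichotomy — so that exactly one member of the relevant Vogan packet supports the period — together with the stability of the local integrals across the packet; alternatively one sets up the integral representation uniformly over $\mathcal G$. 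A careful matching of Tamagawa and local measures then produces the constant $C_\lambda$ of \eqref{e: measure comparison}, while the analogous constant $C_G$ cancels against the normalization of $\langle\phi,\phi\rangle$.

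\emph{Archimedean local integrals and part (1).} For part (1), at a good place $\alpha_v(\phi_v^\prime)\ne 0$ is immediate from \eqref{e: liu's theorem} and the non-vanishing of unramified tempered local $L$-factors at the relevant points; at a ramified non-archimedean place it is Waldspurger's criterion \eqref{e: multiplicity one} applied to $\pi_v$, which carries a Bessel model of the required type because the non-zero global functional localizes and $\operatorname{Hom}_{R_{\lambda,v}}(\pi_v,\chi_{\lambda,v})$ is at most one-dimensional by \eqref{uniqueness}. The genuinely new case is an archimedean $v$, where $\pi_v$ is a discrete series: by \cite[Proposition~3.5]{Liu} the integral \eqref{e: local integral 2} converges absolutely, and I would evaluate it explicitly on a lowest- (resp. highest-) weight vector, reducing to a Barnes-type integral of Gamma factors — the natural generalization of the $n=2$ computation — which is manifestly non-zero; this simultaneously makes the archimedean normalization $\alpha_v^\natural$ completely explicit, as required above, and circumvents the still-open archimedean analogue of \eqref{e: multiplicity one}. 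Part (2) then follows by specializing the identity of the first paragraph at $s=1/2$.

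\emph{Main obstacle.} I expect the real weight to fall on two points. The first is the archimedean local analysis for a general discrete series of $\mathrm{SO}(2n+1)$: establishing convergence and actually computing the regularized integral \eqref{e: local integral 2}, a substantial extension of the classical $n=2$ case. The second is the global bookkeeping in the second paragraph: justifying the interchange of the regularized global integral with its Euler product, controlling the measure constants, and above all extracting the precise factor $2^{-l}$ from the residue of the Eisenstein series in terms of the number $l$ of cuspidal constituents of $\Pi$.
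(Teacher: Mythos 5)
Your proposal takes a genuinely different route from the paper, and as written it contains a gap that I think is fatal to the approach. The paper does not proceed by constructing a global zeta integral on an auxiliary group whose unfolding produces $B_{\lambda,\psi}$; that kind of direct Rankin--Selberg construction for $L(1/2,\pi)L(1/2,\pi\times\chi_E)$ with the Bessel period appearing in the unfolding is not available here (and, as the authors note in the acknowledgements, the allied relative-trace-formula program of Furusawa--Martin--Shalika has not yet produced it). Instead, the paper's proof goes through the theta correspondence $G\leftrightarrow\widetilde{\mathrm{Sp}}_n$: the key is the pull-back formula \eqref{e: pull-back1} from \cite{Fu}, which converts $B_{\lambda,\psi}(\phi)$ into the $\psi_\lambda$-Whittaker period of $\theta^\varphi_\psi(\phi)$ on the metaplectic group; then the Lapid--Mao formula \eqref{e: LM formula} supplies the exact Ichino--Ikeda-type identity there, with the factor $2^{-l}$ \emph{already built in}; and the Gan--Takeda precise Rallis inner product formula \eqref{e: rallis inner product} closes the loop by converting the metaplectic Petersson norm back to $\langle\phi,\phi\rangle$ times $L(1/2,\pi)$. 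The global identity \eqref{e: preformula} then reduces \eqref{e: main identity} to the local equality \eqref{e: local equality}, which is the content of Section~\ref{s: local identity} and is proved by a careful local pull-back computation (Lemma~\ref{l: main lemma}), not by a residue or Barnes-integral analysis.

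Two specific places where your plan, as stated, would not go through. First, your derivation of $2^{-l}$ from ``the $l$-fold residue'' of an Eisenstein series is speculative and does not track how this factor actually arises: in the established Ichino--Ikeda-type formulas the factor $|\mathcal S(\Psi(\pi))|^{-1}$ is a byproduct of the Arthur/Langlands combinatorics, and in this paper it is simply imported from Lapid--Mao. There is no step in the proposal that would produce it rigorously. Second, your strategy for part~(1) at an archimedean place --- namely, an explicit evaluation of the regularized integral \eqref{e: local integral 2} on a lowest-weight vector of a general discrete series of $\mathrm{SO}(2n+1,\mathbb R)$ --- would be a substantial new local theorem in its own right and goes well beyond the classical $n=2$ computation; it is precisely what the paper is designed to \emph{avoid}. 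The paper instead proves the local non-vanishing indirectly: by \eqref{e: theta lift} the global theta lift $\sigma=\Theta_n(\pi,\psi)$ is $\psi_\lambda$-generic and cuspidal, hence each $\sigma_v$ is $\psi_{\lambda,v}$-generic, and Corollary~\ref{c: genericity}, itself a consequence of the local pull-back identity \eqref{e: weil hermitian13}, shows that $\psi_{\lambda,v}$-genericity of $\theta(\pi_v,\psi_v)$ is equivalent to $\alpha_v\not\equiv 0$. This completely sidesteps the archimedean multiplicity-one problem \eqref{e: multiplicity one} and the explicit archimedean computation you describe. You also appear to conflate Liu's formula \eqref{e: liu's theorem} for the local Bessel integral $\alpha_v$ with a local unramified doubling computation; these are different statements and the passage between them is exactly the nontrivial local equality \eqref{e: local equality} that the paper must prove.

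In short, the missing idea is the three-way comparison via theta correspondence (Furusawa pull-back, Lapid--Mao, Gan--Takeda), and then a local identity matching three distinct local integrals; your Eisenstein-series plan does not have a realization point, your $2^{-l}$ argument is not rigorous, and your archimedean plan, even if feasible, would cost far more than the paper's indirect argument.
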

 %
 %
 %
 \begin{Remark}
 The existence of a weak lift $\Pi$ readily follows from our previous paper \cite[Theorem~1]{FM0}, as explained in
 the beginning of \ref{ss: set up}.
 \end{Remark}
 \begin{Remark}
 When $\pi$ has a weak lift $\Pi$ to $\gl_{2n}\left(\mA\right)$
 of the form \eqref{e: isobaric}, it is clear from the definition
 of the Arthur parameter that
 $2^l=\left|\mathcal S\left(\Psi\left(\pi\right)\right)\right|$.
 \end{Remark}
 %
\begin{Remark}
Suppose that $\pi_v$ is unramified.
Then we may define $L\left(s,\pi_v,\mathrm{Ad}\right)$ 
in terms of the Satake parameter of $\pi_v$.
This coincides with the one defined by \eqref{sym=ad}.
 \end{Remark}
 %
 %
 %
 %
%
%
%
%
The following corollary is proved in Section~\ref{s: maincor}.
\begin{corollary}\label{maincor}
Keep the assumption in Theorem~\ref{t: main theorem}
except for $B_{\lambda,\psi}\not\equiv 0$ on $V_\pi$.
If we assume that 
Arthur's conjectures \cite[Conjecture~9.4.2, Conjecture~9.5.4]{Ar} hold
for any $G^\prime\in\mathcal G$,
the equality \eqref{e: main identity} holds
for any non-zero decomposable cusp form $\phi=\otimes_v\,\phi_v
\in V_\pi$.
\end{corollary}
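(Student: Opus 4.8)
The plan is to divide into two cases according to whether or not $B_{\lambda,\psi}$ vanishes identically on $V_\pi$, the non-vanishing case being covered by Theorem~\ref{t: main theorem}. Since Arthur's conjectures are assumed, the weak lift $\Pi$ of $\pi$ to $\gl_{2n}\left(\mathbb A\right)$ exists and is of the form \eqref{e: isobaric}, so the right hand side of \eqref{e: main identity} is well-defined, exactly as in Theorem~\ref{t: main theorem}. If $B_{\lambda,\psi}\not\equiv 0$ on $V_\pi$, then \eqref{e: main identity} is the content of Theorem~\ref{t: main theorem}; hence from now on I would assume $B_{\lambda,\psi}\equiv 0$ on $V_\pi$, so that the left hand side of \eqref{e: main identity} vanishes for every decomposable $\phi$, and it remains only to show that the right hand side vanishes as well.

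Since $\pi$ is tempered, the finitely many non-good local factors occurring on the right hand side of \eqref{e: main identity} are the same finite non-zero quantities that appear in Theorem~\ref{t: main theorem}; hence the right hand side vanishes identically in $\phi$ as soon as either (i) $L\left(1/2,\pi\right)L\left(1/2,\pi\times\chi_E\right)=0$, or (ii) there is a place $v$ with $\mathrm{Hom}_{R_{\lambda,v}}\left(\pi_v,\chi_{\lambda,v}\right)=0$. Indeed, in case (ii) the reasoning recalled around \eqref{e: multiplicity one2}, combined with \eqref{uniqueness}, shows that $\alpha_v\left(\cdot,\cdot\right)$, and therefore $\alpha_v^\natural\left(\cdot,\cdot\right)$, vanishes identically; moreover such a $v$ is necessarily not good, since at a good place \eqref{e: liu's theorem} and temperedness give $\alpha_v\left(\phi_v\right)\ne 0$ for the spherical vector, so this $v$ does occur in the product over non-good places. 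In case (i) there is nothing more to prove, so I would assume $L\left(1/2,\pi\right)L\left(1/2,\pi\times\chi_E\right)\ne 0$ and try to exhibit a place $v$ with $\mathrm{Hom}_{R_{\lambda,v}}\left(\pi_v,\chi_{\lambda,v}\right)=0$.

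Suppose, for contradiction, that $\mathrm{Hom}_{R_{\lambda,v}}\left(\pi_v,\chi_{\lambda,v}\right)\ne 0$ at every place $v$. By the local Gross--Prasad conjecture for special orthogonal groups --- the multiplicity statement \eqref{uniqueness} of \cite{GGP,JSZ} together with the characterization of the distinguished member of a tempered local Vogan packet in terms of local root numbers, due to Waldspurger~\cite{Wa1,Wa2} (and M\oe glin--Waldspurger) in the non-archimedean case --- each $\pi_v$ would be the distinguished member of its local Vogan packet, so that the collection $\left\{\pi_v\right\}_v$, which is just $\pi$ on $G$, would be the Gross--Prasad distinguished collection. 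Now $\Psi\left(\pi\right)$ is an elliptic tempered Arthur parameter, and from $L\left(1/2,\pi\right)L\left(1/2,\pi\times\chi_E\right)=\prod_{i=1}^{l}L\left(1/2,\pi_i\right)L\left(1/2,\pi_i\times\chi_E\right)\ne 0$ together with the self-duality of each factor, the root numbers $\epsilon\left(1/2,\pi_i\right)$ and $\epsilon\left(1/2,\pi_i\times\chi_E\right)$ are all $+1$. Granting Arthur's conjectures for every $G'\in\mathcal G$, Arthur's multiplicity formula then matches the character of $\mathcal S\left(\Psi\left(\pi\right)\right)$ cut out by the local Gross--Prasad data with the automorphy character, and the ``only if'' direction of the global Gross--Prasad conjecture for special Bessel periods --- which becomes available once Arthur's endoscopic classification is granted for all $G'\in\mathcal G$ --- yields $B_{\lambda,\psi}\not\equiv 0$ on $V_\pi$, contradicting the standing assumption. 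Hence the desired place $v$ exists, and the right hand side of \eqref{e: main identity} vanishes for every $\phi$, which completes the plan. I expect the genuine difficulty to be concentrated entirely in this last step: passing from the local non-vanishing conditions and $L\left(1/2,\pi\right)L\left(1/2,\pi\times\chi_E\right)\ne 0$ to non-vanishing of the \emph{global} Bessel period is precisely what compels the corollary to be conditional on Arthur's conjectures for all the forms in $\mathcal G$, whereas the case $B_{\lambda,\psi}\not\equiv 0$ is immediate from Theorem~\ref{t: main theorem}.
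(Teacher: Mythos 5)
Your high-level structure is in fact the one the paper uses: reduce to showing that the right-hand side of \eqref{e: main identity} vanishes identically when $B_{\lambda,\psi}\equiv 0$ on $V_\pi$, then argue by contradiction that if the right-hand side were not identically zero one could deduce $B_{\lambda,\psi}\not\equiv 0$ on $V_\pi$. Your dichotomy --- either $L\left(1/2,\pi\right)L\left(1/2,\pi\times\chi_E\right)=0$, or $\alpha_v\equiv 0$ (equivalently $\mathrm{Hom}_{R_{\lambda,v}}\left(\pi_v,\chi_{\lambda,v}\right)=0$) at some $v$ --- correctly describes how the right-hand side can vanish, and you are right that any place where $\alpha_v\equiv 0$ cannot be good, so it does appear in the product. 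You also correctly isolate that the genuine content lies in the implication ``${\rm RHS}\ne 0\Rightarrow B_{\lambda,\psi}\not\equiv 0$ on $V_\pi$.''

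The gap is in how you close that implication. You assert that, once Arthur's conjectures \cite[Conjecture~9.4.2, Conjecture~9.5.4]{Ar} are granted for all $G^\prime\in\mathcal G$, ``the `only if' direction of the global Gross--Prasad conjecture for special Bessel periods \dots becomes available,'' and you then invoke it directly. That is not a step you can take: Arthur's local Langlands correspondence and multiplicity formula for $\mathcal G$ do \emph{not} by themselves yield the non-vanishing direction of the global Gross--Prasad conjecture. That direction is precisely a theorem-in-itself that must be established, and doing so here is the whole point of the argument. In fact, the paper proves exactly this step constructively: granting Arthur's Conjecture~9.5.4, $\pi$ has a weak lift to $\mathrm{GL}_{2n}\left(\mA\right)$, and then the descent construction of Ginzburg--Rallis--Soudry~\cite{GRS11} gives a globally generic $\pi^\circ$ on $\mathbb G\left(\mA\right)$ nearly equivalent to $\pi$. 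Applying \cite[Proposition~5]{FM0} (which rests on theta correspondence) to $\pi^\circ$, one obtains some $G^\prime\in\mathcal G$ with $\mathrm{disc}\left(V^\prime\right)=\left(-1\right)^n$ and an irreducible cuspidal $\pi^\prime$ of $G^\prime\left(\mA\right)$ nearly equivalent to $\pi$ with $B_{\lambda,\psi}\not\equiv 0$ on $V_{\pi^\prime}$. One then shows, place by place, that $G_v\simeq G^\prime_v$ and $\pi_v\simeq\pi^\prime_v$: at finite $v$ by comparing $L$-parameters (temperedness $+$ Arthur's local Langlands for $\mathcal G$) and invoking Waldspurger's local Gross--Prasad \cite{Wa1,Wa2} to identify the unique member with a Bessel model of type $E_v$; at real $v$ by comparing theta lifts via Adams--Barbasch~\cite{AB}, Corollary~\ref{c: genericity}, and the uniqueness of the generic member in a tempered $L$-packet. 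Multiplicity one from Conjecture~9.5.4 then gives $\pi=\pi^\prime$, forcing $B_{\lambda,\psi}\not\equiv 0$ on $V_\pi$ and giving the contradiction. Replacing this chain of arguments with a citation to ``the global GP conjecture'' is an unjustified leap; the corollary's conditionality on Arthur's conjectures enters through the descent step and the identification $\pi=\pi^\prime$, not through an assumed form of the global GP conjecture.
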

%
We refer to Ichino and Ikeda~\cite[2.5]{II}
for the relevance of Arthur's conjectures to the Gross-Prasad conjecture.
%
%
%

For the sake of the reader,
here we explain  the skeleton of our proof of 
\eqref{e: main identity}.
As in our previous paper \cite{FM0}, the theta correspondence
between $G\in\mathcal G_n$ and $\widetilde{\operatorname{Sp}}_n$,
i.e. rank $n$ metaplectic group, 
plays a pivotal role.

Suppose that $B_{\lambda,\psi}\left(\phi\right)\ne  0$ for $\phi\in V_\pi$.
The computation in \cite{Fu} of the pull-back of the $\psi_\lambda$-Whittaker period
$W\left(\theta_\psi^\varphi\left(\phi\right);\psi_\lambda\right)$,
which is defined by \eqref{e: metaplectic Whittaker},
of $\theta_\psi^\varphi\left(\phi\right)$, the theta
lift 
of $\phi$ to $\widetilde{\operatorname{Sp}}_n
\left(\mA\right)$ with respect to the additive character $\psi$
and the test function $\varphi$, yields
\begin{equation}\label{e: B=W crude}
B_{\lambda,\psi}\left(\phi\right)
\aaeq 
C_G^{-1}C_\lambda\cdot
W\left(\theta_\psi^\varphi\left(\phi\right);\psi_\lambda\right).
\end{equation}
Here we use the symbol ``$\aaeq$'' to imply that the two sides are
equal up to multiplication by a product of finitely many local factors.
Then  the remarkable formula obtained by Lapid and Mao~\cite{LM}
implies that
we have
\begin{equation}\label{e: LM crude}
\frac{\,\left| W\left(\theta_\psi^\varphi\left(\phi\right);\psi_\lambda\right)\right|^2\,}{
\left(\theta_\psi^\varphi\left(\phi\right),\theta_\psi^\varphi\left(\phi\right)\right)}
\aaeq
2^{-l}\cdot
\frac{L\left(1/2,\pi\times\chi_E\right)\prod_{j=1}^n \zeta_F\left(2j\right)}{
L\left(1,\pi,\operatorname{Ad}\right)}
\end{equation}
where $\left(\theta_\psi^\varphi\left(\phi\right),
\theta_\psi^\varphi\left(\phi\right)\right)$
is the square of the Petersson norm of $\theta_\psi^\varphi\left(\phi\right)$.
On the other hand, 
some proper adjustments to the proof of the \emph{precise Rallis inner product formula}
in Gan and Takeda~\cite{GT1}
yield one in our case, namely
\begin{equation}\label{e: GT crude}
\frac{\left(\theta_\psi^\varphi\left(\phi\right),\theta_\psi^\varphi\left(\phi\right)\right)}{
\langle\phi,\phi\rangle}
\aaeq
C_G\cdot
\frac{L\left(1/2,\pi\right)}{\prod_{j=1}^n\zeta_F\left(2j\right)}.
\end{equation}
Since we have
$C_G\cdot\prod_{j=1}^n\zeta_F\left(2j\right)
= C_\lambda\cdot L\left(1,\chi_E\right)$, 
the combination of 
\eqref{e: B=W crude}, \eqref{e: LM crude}
and \eqref{e: GT crude} yields
%
\begin{equation}\label{e: GP crude}
\frac{\left|B_{\lambda,\psi}\left(\phi\right)\right|^2}{
\langle\phi,\phi\rangle}\aaeq
2^{-l}\cdot C_\lambda\cdot
\frac{L\left(1/2,\pi\right)L\left(1/2,\pi\times\chi_E\right)
\prod_{j=1}^n\zeta_F\left(2j\right)
}{
L\left(1,\pi,\operatorname{Ad}\right)L\left(1,\chi_E\right)}.
\end{equation}
%
Thus our task   is to  elaborate \eqref{e: GP crude} to
the precise equality  \eqref{e: main identity} by executing  the above
idea
rigorously and proving a certain local equality
by some intricate arguments.
It is done in Section~\ref{s: local identity},
which is the heart of the matter of this paper.
%
%
%
%
%
%
%
%
%
%
%
%
%
%
\subsection{B\"ocherer's conjecture}
By considering the case when $n=2$, $F=\mathbb Q$ and $G=\mathbb G_2\simeq 
\operatorname{PGSp}_2$,
Theorem~\ref{t: main theorem} yields a proof of the long-standing conjecture
of B\"ocherer~\cite{Bo} concerning 
central critical values of imaginary quadratic twists of spinor $L$-functions
for holomorphic Siegel cusp forms of degree two which are Hecke eigenforms,
thanks to the recent work of Dickson, Pitale, Saha and Schmidt~\cite{DPSS}.
Namely  B\"ocherer's conjecture holds in the following refined form.
%
%
%
%
\begin{theorem}\label{t: Boecherer}
Let $\varPhi$ be a holomorphic Siegel cusp form of degree two and weight $k$
with respect to $\operatorname{Sp}_2\left(\mathbb Z\right)$
which is a Hecke eigenform and $\pi\left(\varPhi\right)$
the associated automorphic representation 
of $\operatorname{PGSp}_2\left(\mA\right)\simeq
\mathbb G_2\left(\mA\right)$.
Suppose that $\varPhi$ is not a Saito-Kurokawa lift.
Let 
\[
\varPhi\left(Z\right)=\sum_{T>0}a\left(T,\varPhi\right)
\exp\left[2\pi\sqrt{-1}\operatorname{tr}\left(TZ\right)\right],
\,\, Z\in\mathfrak H_2,
\]
be the Fourier expansion where $T$ runs over semi-integral positive definite 
two by two symmetric matrices 
and $\mathfrak H_2$ denotes the Siegel upper half space of degree two.

For an imaginary quadratic field $E$ with discriminant 
$-D_E$, let us define $B\left(\varPhi;E\right)$ by
\[
B\left(\varPhi;E\right):=w\left(E\right)^{-1}
\sum_{\left\{T: \det T=D_E\slash 4\right\}\slash \sim}
a\left(T,\varPhi\right)
\]
where $\sim$ denotes the equivalence relation defined by $T_1\sim T_2$
if there exists an element 
$\gamma$ of $\operatorname{SL}_2\left(\mathbb Z\right)$
such that ${}^t\gamma T_1\gamma = T_2$
and 
$w\left(E\right)$ is the number of roots of unity in $E$.
We recall that when $\det T=D_E\slash 4$, the number of elements
in $\left\{\gamma\in\operatorname{SL}_2\left(\mathbb Z
\right): {}^t\gamma T\gamma =T\right\}$
is equal to $w\left(E\right)$.

Then we have
\begin{equation}\label{e: Boecherer}
\frac{\left|B\left(\varPhi; E\right)\right|^2}{
\langle\varPhi,\varPhi\rangle}
=2^{2k-4}\cdot
 D_E^{k-1}\cdot
\,
\frac{L\left(1/2,\pi\left(\varPhi\right)\right)
L\left(1/2,\pi\left(\varPhi\right)\times\chi_E\right)}{
L\left(1,\pi\left(\varPhi\right),\operatorname{Ad}\right)}.
\end{equation}
Here 
\[
\langle\varPhi,\varPhi\rangle=\int_{\mathrm{Sp}_2\left(\mathbb Z\right)
\backslash \mathfrak H_2}
\left|\varPhi\left(Z\right)\right|^2\det\left(Y\right)^{k-3}\,dX\, dY
\quad\text{where $Z=X+\sqrt{-1}\,Y$.}
\]
\end{theorem}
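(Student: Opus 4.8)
The plan is to deduce Theorem~\ref{t: Boecherer} from Theorem~\ref{t: main theorem} in the special case $n=2$, $F=\mathbb Q$, $G=\mathbb G_2\simeq\operatorname{PGSp}_2$, by translating the adelic identity \eqref{e: main identity} into the classical language of Siegel modular forms. First I would attach to the Hecke eigenform $\varPhi$ of weight $k$ the irreducible cuspidal automorphic representation $\pi=\pi(\varPhi)$ of $\operatorname{PGSp}_2(\mathbb A_{\mathbb Q})\simeq\mathbb G_2(\mathbb A)$; since $\varPhi$ is a holomorphic Siegel cusp form with respect to $\operatorname{Sp}_2(\mathbb Z)$, the archimedean component $\pi_\infty$ is a holomorphic discrete series of weight $k$ (so the discrete-series hypothesis of Theorem~\ref{t: main theorem} is met, for $k\ge 3$), and every finite place is unramified (spherical), so the only "not good" place is $v=\infty$ together with $v=2$; one checks $2$ is in fact good since $\pi_2$ is spherical, $E$ is unramified-or-split at odd places is irrelevant here because we must track the ramification of $E$ at $2$ — this I will need to handle carefully. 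Because $\varPhi$ is not a Saito--Kurokawa lift and is a Hecke eigenform on the full level-one group, its weak lift $\Pi$ to $\operatorname{GL}_4(\mathbb A)$ is either cuspidal ($l=1$, general type) or an isobaric sum of two $\operatorname{GL}_2$-pieces ($l=2$, Yoshida type); but Yoshida lifts do not exist at level one, so $l=1$ and hence $2^{-l}=1/2$.

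Next I would invoke the main theorem of Dickson--Pitale--Saha--Schmidt~\cite{DPSS}, which provides exactly the dictionary between the adelic special Bessel period $B_{\lambda,\psi}(\phi)$ (for a suitable choice of $\lambda$ with $E=\mathbb Q(\sqrt\lambda)$ and a suitable decomposable vector $\phi=\otimes_v\phi_v\in V_\pi$ corresponding to $\varPhi$) and the classical Bessel sum $B(\varPhi;E)$ of Fourier coefficients $a(T,\varPhi)$ over $\operatorname{SL}_2(\mathbb Z)$-classes of $T$ with $\det T=D_E/4$. Concretely, for the vector $\phi_\infty$ a lowest-weight vector in the holomorphic discrete series and $\phi_p$ spherical at all finite $p$, \cite{DPSS} expresses $|B_{\lambda,\psi}(\phi)|^2/\langle\phi,\phi\rangle$ as an explicit constant times $|B(\varPhi;E)|^2/\langle\varPhi,\varPhi\rangle$, with the classical Petersson norm normalized as in the statement. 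Plugging this into \eqref{e: main identity}, the left-hand side becomes (constant)$\cdot|B(\varPhi;E)|^2/\langle\varPhi,\varPhi\rangle$, and the right-hand side is $2^{-1}C_\lambda\,(\prod_{j=1}^2\xi_{\mathbb Q}(2j))\cdot\frac{L(1/2,\pi)L(1/2,\pi\times\chi_E)}{L(1,\pi,\mathrm{Ad})L(1,\chi_E)}\cdot\prod_{v\ \mathrm{bad}}\frac{\alpha_v^\natural(\phi_v)}{\langle\phi_v,\phi_v\rangle_v}$. The completed $L$-functions on the right must then be unfolded into their classical (partial, finite-part) counterparts: the archimedean $\Gamma$-factors of $L(1/2,\pi_\infty)$, $L(1/2,\pi_\infty\times\chi_{E,\infty})$, $L(1,\pi_\infty,\mathrm{Ad})$ and of $\xi_{\mathbb Q}(2)\xi_{\mathbb Q}(4)$, $L(1,\chi_{E,\infty})$ are all explicit products of $\Gamma$-values and powers of $\pi$ depending only on $k$, and they combine with the constant $2^{2k-5}D_E^{k-1}$ asserted in \eqref{e: Boecherer}.

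Thus the core of the argument is an explicit archimedean (and $p=2$, if $E$ ramifies there) local computation: one must evaluate $\alpha_\infty^\natural(\phi_\infty)/\langle\phi_\infty,\phi_\infty\rangle_\infty$ for the holomorphic discrete series of $\operatorname{SO}(5)$-type (equivalently $\operatorname{PGSp}_2$) of weight $k$ with $\phi_\infty$ the lowest weight vector, and similarly $\alpha_2^\natural(\phi_2)/\langle\phi_2,\phi_2\rangle_2$ if $2\mid D_E$; and one must pin down the Haar-measure constant $C_\lambda$ and the \cite{DPSS} proportionality constant, and assemble the $\Gamma$-factors and powers of $2$ and $D_E$. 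The main obstacle I expect is precisely this bookkeeping at the archimedean place — the regularized integral $\alpha_\infty$ is defined via a Fourier-transform regularization (Liu~\cite[3.4]{Liu}), so evaluating it on an explicit vector in a holomorphic discrete series, and doing so with all normalizations matching those of \cite{DPSS}, is delicate; one will likely want to cross-check the final power of $2$ and of $D_E$ against known instances (e.g. the numerical tables or the classical Bessel-period computations in \cite{DPSS} and related work). The split-versus-inert-versus-ramified behaviour of $E$ at $2$, and the resulting factor of $2$ discrepancies (which is where the $2^{2k-5}$ rather than a "cleaner" power comes from), is the subtle point that must be tracked with care. Everything else — choice of $\phi$, temperedness, discrete series at $\infty$, existence and cuspidality of $\Pi$, non-vanishing of $L(1,\pi,\mathrm{Ad})$ for $\pi$ of general type — follows directly from Theorem~\ref{t: main theorem}, its remarks, and \cite{DPSS}.
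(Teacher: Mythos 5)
Your overall plan — specialize Theorem~\ref{t: main theorem} to $n=2$, $F=\mathbb Q$, $G=\mathbb G_2\simeq\operatorname{PGSp}_2$, and translate the adelic identity \eqref{e: main identity} into the classical language of Siegel modular forms via Dickson--Pitale--Saha--Schmidt — is exactly the paper's approach, and your observations on temperedness, $l=1$ (no Yoshida lifts at full level, so $2^{-l}=1/2$), and the weight restriction $k\ge 3$ are all correct. You also underestimate how much \cite{DPSS} already does: the entire archimedean/local bookkeeping you describe as ``the core of the argument'' is precisely what \cite[1.12~Theorem]{DPSS} carries out, and the paper's proof in the case $B(\varPhi;E)\ne 0$ is a one-line citation of that result; you need not, and should not, redo it.

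The genuine gap is that you never address the case $B(\varPhi;E)=0$, which in particular includes every odd $k$. In that case Theorem~\ref{t: main theorem} is simply inapplicable: its running hypothesis is $B_{\lambda,\psi}\not\equiv 0$ on $V_\pi$, and when $B(\varPhi;E)=0$ one cannot verify this hypothesis (indeed for the distinguished decomposable vector attached to $\varPhi$ the relation \eqref{e: uniqueness of special Bessel} shows the two are essentially equivalent). You therefore cannot obtain \eqref{e: main identity}, and hence cannot conclude anything about the right-hand side of \eqref{e: Boecherer} from the refined formula alone. The paper closes this gap using the earlier, non-refined Gross--Prasad result \cite[Theorem~5]{FM0}, which gives the equivalence
\[
B(\varPhi;E)\ne 0 \iff L(1/2,\pi(\varPhi))\,L(1/2,\pi(\varPhi)\times\chi_E)\ne 0,
\]
so that whenever $B(\varPhi;E)=0$ both sides of \eqref{e: Boecherer} vanish. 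For odd $k$ the vanishing of the left side also follows elementarily from the transformation $a({}^t\gamma T\gamma,\varPhi)=(\det\gamma)^k a(T,\varPhi)$ for $\gamma\in\operatorname{GL}_2(\mathbb Z)$, and the vanishing of $L(1/2,\pi(\varPhi))$ is Andrianov's sign-of-functional-equation result; the paper records both. Without this case split, and specifically without invoking the non-refined equivalence from \cite{FM0}, your argument only proves the theorem under the additional hypothesis $B(\varPhi;E)\ne 0$.
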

%
\begin{proof}
First we note that $\pi\left(\Phi\right)$ is tempered
by Weissauer~\cite{We2}
since $\Phi$ is not a Saito-Kurokawa lift,
as explained 
in the proof of \cite[Theorem~4]{FM0}.

Recall that
it is shown in our previous paper~\cite[Theorem~5]{FM0}
that:
\begin{equation}\label{e: weak form}
B\left(\varPhi;E\right)\ne 0 \Longleftrightarrow
L\left(1/2,\pi\left(\varPhi\right)\right)L\left(1/2,\pi\left(\varPhi\right)\times
\chi_E\right)\ne 0.
\end{equation}

When $k$ is odd, we have $L\left(1/2,\pi\left(\varPhi\right)\right)=0$
by Andrianov~\cite[Theorem~3.1.1. (II)]{An}.
Hence \eqref{e: Boecherer} holds by \eqref{e: weak form}.
We mention that $B\left(\varPhi;E\right)=0$ 
also follows in a more elementary way from
${}^t\gamma \left\{T: \det T=D_E\slash 4\right\}\gamma
=\left\{T: \det T=D_E\slash 4\right\}$ and
$a\left({}^t\gamma T\gamma,\varPhi\right)=
\left(\det\gamma\right)^k\cdot a\left( T,\varPhi\right)$
for $\gamma\in\mathrm{GL}_2\left(\mathbb Z\right)$,
as remarked in B\"ocherer~\cite[p.31]{Bo}.

Suppose that $k$ is even.
If $B\left(\varPhi;E\right)=0$,
\eqref{e: Boecherer} holds by \eqref{e: weak form}.
If $B\left(\varPhi;E\right)\ne 0$,
\eqref{e: Boecherer} follows from \eqref{e: main identity}
by Dickson et al. \cite[1.12~Theorem]{DPSS}.
\end{proof}
%
\begin{Remark}
In \cite{Bo},
B\"ocherer conjectured that there exists a constant
$c_\varPhi$ which depends only on $\varPhi$ such that we have
\[
\left|B\left(\varPhi;E\right)\right|^2=
c_\varPhi\cdot  D_E^{k-1}\cdot 
L\left(1/2,\pi\left(\varPhi\right)\times\chi_E\right)
\]
for any imaginary quadratic  field $E$.
As far as we know, B\"ocherer did not speculate on the constant
$c_\varPhi$ except when $\varPhi$ is a Saito-Kurokawa lift.
For the exact formula for the left hand side of
\eqref{e: Boecherer}
when $\varPhi$ is a Saito-Kurokawa lift, we refer to Dickson et al.~\cite[3.12~Theorem]{DPSS}.
\end{Remark}
%
\begin{Remark}
For brevitiy, only the full modular case is stated in 
Theorem~\ref{t: Boecherer}.
In fact, Theorem~\ref{t: main theorem} yields 
\cite[1.12~Theorem]{DPSS} unconditionally when $\Lambda=1$, i.e.
the refined form of B\"ocherer's conjecture has an extension
to   the odd
square free  level case.

We expect Theorem~\ref{t: Boecherer} and its extension
to have a broad spectrum of interesting consequences, e.g.
\cite[Section~3.6]{DPSS}. We also mention 
Blomer~\cite{Blomer} and Kowalski, Saha and Tsimerman~\cite{KST}.
It is expected that the extension of Theorem~\ref{t: Boecherer} holds
also in the case when $k=2$, which we wish to consider in the near future.
\end{Remark}
%
%
%
\begin{Remark}
In the sequel, we shall pursue the generalization of 
Theorem~\ref{t: Boecherer} and its extension
to the case when the character of the ideal class group  is  not necessarily trivial.
We shall also pursue the case when the Siegel cusp form in question
is vector valued.
\end{Remark}
%
%
%
%
%
%
%
\subsection{Organization of the paper}
This paper is organized as follows.
In Section~\ref{s: reduction},  first we  review
the precise Rallis inner product formula by
Gan and Takeda~\cite{GT1} and the
explicit formula for the Whittaker periods on
the metaplectic group by Lapid and Mao~\cite{LM}.
Both formulas play decisive roles as explained in \ref{ss: main theorem}.
After the review, we turn to the proof of Theorem~\ref{t: main theorem}.
By combining these two formulas with the pull-back formula for the
Whittaker periods on the metaplectic group in \cite{Fu},
the proof of Theorem~\ref{t: main theorem}  is reduced to verifying
a certain local equality, which we prove in
Section~\ref{s: local identity}.
Then in Section~\ref{s: maincor}
we deduce Corollary~\ref{maincor}
from Theorem~\ref{t: main theorem}, 
assuming Arthur's conjectures.
%
%
%
\subsection*{Acknowledgements}
It is evident that this paper owes its existence to the Ichino-Ikeda type formula
for Whittaker periods on metaplectic groups
proved by Lapid and Mao, as the culmination of their
profound  papers~\cite{LM-1,LM0,LM1,LM}.
The authors  would like to express their  admiration
for Lapid and Mao.
Indeed in \cite{LM0} they  formulate the Ichino-Ikeda type conjectural
formula for Whittaker periods on general quasi-split reductive groups,
besides metaplectic groups.
We expect their conjectural formula to have broad and deep 
influence on future research in the field of automorphic forms.
It is also evident that, as in our previous paper~\cite{FM0},
 we take full advantage of the recent
significant
contributions to theta correspondence.
 The most notable ones in this paper are \cite{GT,GT1}
by Gan and Takeda.
The former~\cite{GT}  allows us to use freely the global-local machinery 
and the latter~\cite{GT1}  is one of the main ingredients for the proof of 
\eqref{e: main identity}, as explained above.
We also mention that
not only the formulation of the conjecture by Liu~\cite{Liu} 
gave us the starting point of this work
but also our arguments in Section~\ref{s: local identity}
are very much inspired by his arguments in \cite[3.5]{Liu}.
%

The first author has been pursuing the relative trace formula approach towards
B\"ocherer's conjecture with Martin and Shalika \cite{FS,FM-1,FMS,FM-2}.
Although some substantial progress has been made, 
one must admit that there still lies ahead
 a long way to go by this approach.
The remarkable result by Lapid and Mao enabled us to present a proof 
of B\"ocherer's conjecture here
in the special Bessel period case.
The first author would like to thank Kimball Martin for
prompting him to reexamine the theta correspondence approach.

The authors are very grateful to  Atsushi Ichino for  important suggestions
and encouragements.
They would like to heartily thank Martin Dickson, Ameya Pitale, Abhishek
Saha and Ralf Schmidt for helpful  and illuminating correspondences
concerning their paper~\cite{DPSS}.
Thanks are also due to Shunsuke Yamana for kindly pointing out an  oversight
concerning  the Rallis inner product formula
in an earlier version of this paper.

Last but not least, the authors would like to express their deep gratitudes
to the anonymous referee for carefully reading the manuscript and offering many invaluable 
suggestions.
%
%
%
%
%
%
%
%
%
%
%
%
%
%
%
%
\section{Reduction to a local equality}\label{s: reduction}
%
%
%
%
\subsection{Set up}\label{ss: set up}
Throughout this section, $\pi=\otimes_v\pi_v$ is an irreducible cuspidal tempered automorphic representation of $G\left(\mathbb A\right)$
for $G=\so\left(V\right)\in\mathcal G_n$
over a totally real number field $F$ such that:
\begin{subequations}\label{e: conditions on pi}
\begin{equation}\label{e: discreteness}
\text{$\pi_v$ is a discrete series representation at any real place $v$};
\end{equation}
%
\begin{equation}\label{e: non-zero Bessel}
\text{$B_{\lambda, \psi}$, the Bessel model 
of type $E$, does not vanish identically on $V_\pi$}.
\end{equation}

Since $\pi$ is tempered, 
by Remark~2 in \cite{FM0},
Theorem~1 in \cite{FM0} and the arguments in the course
of its proof  are all applicable to $\pi$.
Hence we have
\begin{equation}\label{e: non-vanishing}
L\left(1/2,\pi\right)\cdot L\left(1/2,\pi\times\chi_E\right)\ne 0
\end{equation}
and there exists a globally generic irreducible cuspidal automorphic
representation $\pi^\circ$ of $\mathbb G\left(\mA\right)$
which is nearly equivalent to $\pi$.
Thus
\begin{equation}\label{e: locally almost generic}
\text{$\pi$ is almost locally generic.}
\end{equation}
We note that when 
$n=2$,
$F=\mathbb Q$ and $G=\mathbb G$,
the existence of such $\pi^\circ$ also follows
from Weissauer~\cite{We1}.

Since $\pi^\circ$ has a weak lift to $\mathrm{GL}_{2n}\left(\mA\right)$
by Arthur~\cite[Theorem~1.5.2]{Ar},
we may say:
\begin{equation}\label{e: weak lift}
\text{$\pi$ has a weak lift $\Pi$ to $\gl_{2n}\left(\mA\right)$
of the form \eqref{e: isobaric}}.
\end{equation}
We note that the existence of a weak lift of $\pi^\circ$
to $\mathrm{GL}_{2n}\left(\mA\right)$
also follows from Cogdell, Kim, Piatetski-Shapiro and Shahidi~\cite{CKPSS}
since $\pi^\circ$ is generic.
%

For a positive integer $n$, let $Y_n$ be the space of $2n$-dimensional
row vectors equipped with the alternating form
\[
\langle w_1,w_2\rangle=w_1 \begin{pmatrix}0&1_n\\-1_n&0\end{pmatrix}
{}^tw_2\quad\text{for $w_1,w_2\in Y_n$}.
\]
Let $Y_n=Y_n^+\oplus Y^-_n$ be the polarization
where
\[
Y_n^+:=\left\{\left(y_1,\cdots , y_{2n}\right):
y_i=0\,\left(1\le i\le n\right)\right\}
\]
and
\[
Y_n^-:=\left\{\left(y_1,\cdots , y_{2n}\right):
y_i=0\,\left(n+1\le i\le 2n\right)\right\}.
\]
Let $\symp_n$ denote the rank $n$ symplectic group defined by
\[
\symp_n:=\left\{g\in\gl_{2n}:
g\begin{pmatrix}0&1_n\\-1_n&0\end{pmatrix}{}^tg=
\begin{pmatrix}0&1_n\\-1_n&0\end{pmatrix}\right\}
\]
which acts on $Y_n$ from the right.
We recall that $\widetilde{\symp}_n\left(\mA\right)$,
the rank $n$ metaplectic group over $\mA$,  is a certain twofold central extension
of $\symp_n\left(\mA\right)$.
The theta correspondence of automorphic forms
between $\widetilde{\symp}_n\left(\mA\right)$
and $G\left(\mA\right)$
for $G\in\mathcal G_n$ plays the essential role  as in
our previous paper~\cite{FM0}.
%
%
%

Let us realize $\omega_\psi=\omega_{\psi, V, Y_n}$,
the Weil representation of 
$G\left(\mA\right)\times \widetilde{\symp}_n\left(\mA\right)$
with respect  to $\psi$,
 on $\mathcal S\left(\left(V\otimes Y_n^+\right)
\left(\mA\right)\right)$,
the Schwartz-Bruhat space on $\left(V\otimes Y_n^+\right)
\left(\mA\right)$,
by taking $V\otimes Y_n=\left(V\otimes Y_n^+\right)\oplus
\left(V\otimes Y_n^-\right)$ 
as a polarization of the symplectic space $V\otimes Y_n$.
For $\phi\in V_\pi$ and 
$\varphi\in\mathcal S\left(\left(V\otimes Y_n^+\right)
\left(\mA\right)\right)$, 
the theta lift $\theta_\psi^\varphi\left(\phi\right)$
of $\phi$ to $\widetilde{\symp}_n\left(\mA\right)$
with respect to the additive character $\psi$
and the test function $\varphi$ is defined by
\[
\theta_\psi^\varphi\left(\phi\right)\left(h\right):=
\int_{G\left(F\right)\backslash G\left(\mA\right)}
\left(\sum_{z\in \left(V\otimes Y_n^+\right)
\left(F\right)}
\left(\omega_\psi\left(g,h\right)\varphi\right)\left(z\right)\right)
\phi\left(g\right)dg
\quad\text{for $h\in\widetilde{\symp}_n\left(\mA\right)$.}
\]
Let $\Theta_n\left(\pi,\psi\right)$ denote
the automorphic representation of $\widetilde{\symp}_n\left(\mA\right)$
generated by $\theta_\psi^\varphi\left(\phi\right)$
where $\phi$ and $\varphi$ vary in $V_\pi$ and 
$\mathcal S\left(\left(V\otimes Y_n^+\right)
\left(\mA\right)\right)$, respectively.
For the sake of simplicity, we shall write $\sigma$ for $\Theta_n\left(\pi,\psi\right)$
and $V_\sigma$ for its space of automorphic forms.

Then by the  proof of Theorem~1 in \cite{FM0}, we have:
%
\begin{equation}\label{e: theta lift}
\text{$\sigma=\Theta_n\left(\pi,\psi\right)$ is $\psi_\lambda$-generic,
irreducible and cuspidal.}
\end{equation}
%
\end{subequations}
We refer to \ref{ss: lapid mao}
for the definition of $\psi_\lambda$-genericity.
%
%
%
%
\subsection{Rallis inner product formula}
Gan and Takeda~\cite{GT1} proved the precise
Rallis inner product formula in the 
$(\mathrm{O}_{2n+1},\widetilde{\mathrm{Sp}}_n)$ setting.
On the other hand, the one we  need for our purpose is in the 
$(\mathrm{SO}_{2n+1},\widetilde{\mathrm{Sp}}_n)$ setting.
Here we recall the Rallis inner product formula, 
with some explanations of  adjustments to the proof
in \cite{GT1} necessary to deduce the one in our setting.

Let $\mathbb W$ be the quadratic space $V\oplus \left(-V\right)$,
i.e. as a vector space $\mathbb W$ is a direct sum $V\oplus V$
and its symmetric bilinear form $\left(\, \,, \,\,\right)_{\mathbb W}$
on $\mathbb W$
is defined by
\[
\left(v_1\oplus v_2,v_1^\prime\oplus v_2^\prime\right)_{\mathbb W}
:=\left(v_1,v_1^\prime\right)_{V}-\left(v_2,v_2^\prime\right)_V.
\]
Let $\mathbb W^+$ be a maximal isotropic subspace of $\mathbb W$
defined by
\[
\mathbb W^+:=\left\{v\oplus v\in\mathbb W: v\in V\right\}.
\]
We note that there is a natural embedding
\[
\iota:\mathrm{SO}\left(V\right)\times\mathrm{SO}\left(-V\right)
\hookrightarrow
\mathrm{SO}\left(\mathbb W\right)
\quad
\text{such that $\iota\left(g_1,g_2\right)\left(v_1\oplus v_2\right)=
g_1v_1\oplus g_2 v_2$}.
\]
Also there exists an $\mathrm{SO}(V, \mA) \times \mathrm{SO}(-V, \mA)$-intertwining map
\[
\tau: \mathcal{S}\left(\left(V \otimes Y^+_n\right)\left(\mA\right)\right) \hat{\otimes}\,
 \mathcal{S}\left(\left(\left(-V\right) \otimes Y^+_n\right)\left(\mA\right)\right)
 \rightarrow 
 \mathcal{S}\left(\left(\mathbb{W}^+ \otimes Y_n\right)\left(\mA\right)\right)
\]
with respect to the Weil representations,
obtained by composing the natural map
\[
\mathcal{S}\left(\left(V \otimes Y^+_n\right)\left(\mA\right)\right) \hat{\otimes}\,
 \mathcal{S}\left(\left(\left(-V\right) \otimes Y^+_n\right)\left(\mA\right)\right)
 \rightarrow 
 \mathcal{S}\left(\left(\mathbb{W} \otimes Y_n^+\right)\left(\mA\right)\right)
 \]
 with the partial Fourier transform
\[
\mathcal{S}\left(\left(\mathbb{W} \otimes Y_n^+\right)\left(\mA\right)\right)
\xrightarrow{\sim}
\mathcal{S}\left(\left(\mathbb{W}^+ \otimes Y_n\right)\left(\mA\right)\right).
\]
 Namely we have 
 \[
 \tau\left(\omega_{\psi, V, Y_n}\left(g_1\right)\varphi_{+}
 \otimes\omega_{\psi, -V, Y_n}\left(g_2\right)\varphi_{-}\right)
 =\omega_{\psi, \mathbb W, Y_n}\left(\iota\left(g_1,g_2\right)\right)
 \tau\left(\varphi_{+}\otimes\varphi_{-}\right)
 \]
 for $\left(g_1,g_2\right)\in\mathrm{SO}(V, \mA) \times \mathrm{SO}(-V, \mA)$
 and
 $\varphi_{\pm}\in\mathcal{S}\left(\left(\left(\pm
 V\right) \otimes Y^+_n\right)\left(\mA\right)\right)$
 by the double sign.
 
 We also consider local counterparts of $\tau$.
 %
 %
 %
 \subsubsection{Local doubling zeta integrals}
 Let $P$ be the maximal parabolic subgroup of
 $\so\left(\mathbb W\right)$
 defined as the stabilizer of the isotropic subspace
 $V\oplus\left\{0\right\}$.
 Then the Levi subgroup of $P$
 is isomorphic to $\gl\left(V\right)$.

 At each place $v$ of $F$, we consider the degenerate principal 
 representation 
 \[
 I_v\left(s\right):=\mathrm{Ind}_{P\left(F_v\right)}^{
 \so\left(\mathbb W, F_v\right)}\,|\,\,\,\,|_v^s
 \quad\text{for $s\in\mathbb C$.}
 \]
 Here the induction is normalized and
  $|\,\,\,\,|_v^s$ denotes the character of $P\left(F_v\right)$
  which is given by $|\det |_v^s$ on its Levi subgroup $\gl\left(V, F_v\right)$
  and is trivial on its unipotent radical.
  
  For $\phi_v,\phi_v^\prime\in V_{\pi_v}$
  and $\Phi_v\in I_v\left(s\right)$, the local doubling zeta integral
    is defined by
  \begin{equation}\label{e: local doubling}
  Z_v\left(s,\phi_v,\phi_v^\prime,\Phi_v,\pi_v\right):=
  \int_{G_v}\langle\pi_v\left(g_v\right)\phi_v,\phi_v^\prime\rangle_v
  \,\Phi_v\left(\iota\left(g_v,e_v\right)\right)\,dg_v
 \end{equation}
 where $e_v$ denotes the unit element of $G_v$.
 We recall that the integral~\eqref{e: local doubling} converges absolutely
 when $\mathrm{Re}\left(s\right)> -\frac{1}{2}$ by
 Yamana~\cite[Lemma~7.2]{Ya}
 since $\pi_v$ is tempered.
 
 For $\varphi_v\in\mathcal{S}\left(\left(\mathbb{W}^+ \otimes Y_n\right)
 \left(F_v\right)\right)$, we define $\Phi_{\varphi_v}
 \in I_v\left(0\right)$
 by
 \begin{equation}
 \label{local section}
 \Phi_{\varphi_v}\left(g_v\right)=
 \left(\omega_{\psi_v}\left(g_v\right)\varphi_v\right)
 \left(0\right)
 \quad\text{for $g_v\in\so\left(\mathbb W, F_v\right)$}.
 \end{equation}
 %
 \begin{Definition}\label{d: inner product formula ingredient}
 We define $Z_v^\circ\left(\phi_v, \varphi_v,\pi_v\right)$
 for $\phi_v\in V_{\pi_v}$ and $\varphi_v \in
 \mathcal{S}\left(\left(V \otimes Y_n^+\right)
 \left(F_v\right)\right)$ by
 \begin{equation}\label{e: normalized doubling}
Z_v^\circ\left(\phi_v, \varphi_v,\pi_v\right):
=
\frac{\prod_{j=1}^n\zeta_{F_v}\left(2j\right)}{L\left(1\slash 2, \pi_v\right)}
\cdot \frac{1}{\langle\phi_v,\phi_v\rangle_v}\cdot
Z_v\left(0, \phi_v,\phi_v,\Phi_{\tau_v\left(\varphi_v\otimes\varphi_v\right)},
\pi_v\right).
\end{equation}
\end{Definition}
%
%
%
\subsubsection{Rallis inner product formula}   
%
For automorphic forms $\tilde{\varphi}_1$ and $\tilde{\varphi}_2$
on $\widetilde{\mathrm{Sp}}_n\left(\mA\right)$, 
we define the Petersson inner product 
$\left(\tilde{\varphi}_1,\tilde{\varphi}_2\right)$ by
\[
\left(\tilde{\varphi}_1,\tilde{\varphi}_2\right):=
\int_{\symp_n\left(F\right)\backslash \symp_n\left(\mA\right)}
\tilde{\varphi}_1\left(g\right)
\overline{\tilde{\varphi}_2\left(g\right)}\, dg
\]
when the integral converges absolutely.
We recall that $dg$ is the Tamagawa measure.

Let us first recall the Siegel-Weil formula.
Let $\Phi$ be a standard section of 
$\mathrm{Ind}_{P(\mA)}^{\mathrm{SO}(\mathbb W)(\mA)} |\,\,\,|^s$.
Then we form the Siegel Eisenstein series by
\[
E(g, s; \Phi) = \sum_{\gamma \in P(F) \backslash \mathrm{SO}(\mathbb{W})(F)} \Phi(\gamma g, s).
\]
This sum converges absolutely when $\mathrm{Re}(s) > n$ and it has a meromorphic continuation to $\mC$.
We note that 
our Eisenstein series slightly differs from  the Eisenstein series
$E^{\left(m,m\right)}$ in \cite[p.183]{GT1} for $m=2n+1$
since $P$ is also the Siegel parabolic subgroup of $\mathrm{O}(\mathbb W)$
and $P \backslash \mathrm{O}(\mathbb W) \ne P \backslash \mathrm{SO}(\mathbb W)$.

Let 
\[
E(g, s; \Phi)  = \sum_{d \geq -1} A_d(\Phi)(g)s^d
\]
be the Laurent expansion of $E(g, s; \Phi)$ at $s=0$
where $A_d(\Phi)$ is an automorphic form on $\mathrm{SO}(\mathbb W)(\mA)$.
For $\varphi \in\mathcal{S}\left(\left(\mathbb{W}^+ \otimes Y_n\right)
 \left(\mA \right)\right)$, we define the section $\Phi_{\varphi} \in \mathrm{Ind}_{P(\mA)}^{\mathrm{SO}(\mathbb W)(\mA)} |\,\,|^s$ as in the local situation \eqref{local section}
 and 
we simply write $A_d\left(\varphi\right)$ for $A_d\left(\Phi_\varphi\right)$.
 
On the other hand, let $I^{(2n+1, n)}(g, s; \varphi)$ be the regularized theta integral defined in \cite[p.185]{GT1}.
Then we have an equality
\[
I^{(2n+1, n)}(g, s; \omega(z)\varphi) = P_z(s) \mathcal{E}^{(2n+1, n)}(g, s; \varphi)
\]
as in \cite[p.186]{GT1}.
Here $z$ is a regularizing element given in \cite[p.185]{GT1},
$P_z(s)$ is a certain holomorphic function in $s$ depending on $z$
and $\mathcal{E}^{(2n+1, n)}(g, s; \varphi)$ is a certain Eisenstein series 
defined in \cite[p.186]{GT1}.
Let us write the Laurant expansion of $\mathcal{E}^{(2n+1, n)}(g, s; \varphi)$ at 
$s=\frac{n+1}{2}$
as 
\[
\mathcal{E}^{(2n+1, n)}(g, s; \varphi) = \sum_{d \geq -2} B_d^{(2n+1, n)}(\varphi) \left(s-\frac{n+1}{2} \right)^d.
\]

Let $\varphi^0$ denote the spherical Schwartz function defined in \cite[p.181]{GT1}.
We denote by 
$\mathcal{S}^\circ\left(\left(\mathbb{W}^+ \otimes Y_n\right) \left(\mA\right)\right)$ 
the $\mathrm{SO}(\mathbb{W})(\mA)$-span of $\varphi^0$.
Then we have the following Siegel-Weil formula.
%
\begin{proposition}
\label{SW}
With the above notation, we have 
\[
A_0(\varphi) = B_{-1}^{(2n+1, n)}(\varphi)
\]
for any $\varphi \in \mathcal{S}^\circ\left(\left(\mathbb{W}^+ \otimes Y_n\right) \left(\mA\right)\right).$
\end{proposition}
%
\begin{proof}
The argument for the proof of  \cite[Proposition~5.3]{GT1} for
$\left(\mathrm{O}_{2n+1}, \widetilde{\mathrm{Sp}}_n\right)$
works, mutatis mutandis, for 
$\left(\mathrm{SO}_{2n+1}, \widetilde{\mathrm{Sp}}_n\right)$.
It is easily seen that the constant $\lambda_2$ which appears in the proof of
\cite[Lemma~4.3]{GT1} is given by $\displaystyle{\frac{\xi_F\left(1\right)}{ 2\,\xi_F\left(2\right)}}$
in our case.
Here $\xi_F\left(s\right)$ denotes the completed \emph{normalized} zeta function of $F$
given by 
\[
\xi_F\left(s\right) = |D_F|^{s \slash 2}\, \zeta_F\left(s\right),
\]
where $D_F$ is the discriminant of $F$, 
$\zeta_F$ is the completed Dedekind zeta function of $F$
defined by \eqref{e: dedekind} and we define $\xi_F\left(1\right) := \mathrm{Res}_{s=1} \,\xi_F\left(s\right)$ as in \cite[p.180]{GT1}.
Thus the constant $2$ does not appear in the Siegel-Weil formula above
unlike \cite[Proposition~5.3]{GT1}.
\end{proof}
We obtain the following Rallis inner product formula
from Proposition~\ref{SW} as in \cite{GT1}.
%
 \begin{theorem}\label{Rallis inner}
 For any non-zero decomposable vectors $\phi=\otimes_v\, \phi_v\in V_\pi$
 and $\varphi=\otimes_v\,\varphi_v\in 
 \mathcal{S}\left(\left(V \otimes Y_n^+\right)\left(\mA\right)\right)$,
 we have
 \begin{equation}\label{e: rallis inner product}
 \frac{\left(\theta_\psi^\varphi\left(\phi\right),\theta_\psi^\varphi\left(\phi\right)
 \right)}{\langle\phi,\phi\rangle}
 =C_G\cdot
 \frac{L\left(1/2,\pi\right)}{\prod_{j=1}^n\,\zeta_F\left(2j\right)}
 \cdot\prod_v Z_v^\circ\left(\phi_v, \varphi_v,\pi_v\right)
 \end{equation}
 where $Z_v^\circ\left(\phi_v, \varphi_v,\pi_v\right)=1$
 for almost all $v$.
\end{theorem}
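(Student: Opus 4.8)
The plan is to reduce the identity to the Rallis inner product formula of Gan and Takeda~\cite[Theorem~6.6]{GT1} for the pair $\left(\mathrm{O}\left(2n+1\right),\widetilde{\symp}_n\right)$. Since $\dim V=2n+1$ is odd, the element $-1_V$ is a central improper isometry and $\mathrm{O}\left(V\right)=\so\left(V\right)\times\left\langle -1_V\right\rangle$; accordingly $\pi$ extends to an irreducible cuspidal automorphic representation $\tilde\pi$ of $\mathrm{O}\left(V\right)\left(\mA\right)$, cuspidality being preserved because the unipotent radical of any parabolic subgroup of $\mathrm{O}\left(V\right)$ already lies in $\so\left(V\right)$, and for $\phi\in V_\pi$ we let $\tilde\phi\in\tilde\pi$ denote the extension of $\phi$ to $\mathrm{O}\left(V\right)\left(F\right)\backslash\mathrm{O}\left(V\right)\left(\mA\right)$. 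First I would record that the global theta kernel on $\mathrm{O}\left(V\right)\times\widetilde{\symp}_n$ is left-invariant under $-1_V\in\mathrm{O}\left(V\right)\left(F\right)$, which acts on $\mathcal S\left(\left(V\otimes Y_n^+\right)\left(\mA\right)\right)$ essentially by $\varphi\mapsto\varphi\left(-\,\cdot\,\right)$. Comparing the defining integral of $\theta_\psi^\varphi\left(\phi\right)$ over $\so\left(V\right)\left(F\right)\backslash\so\left(V\right)\left(\mA\right)$ with the $\mathrm{O}\left(V\right)$-theta lift of $\tilde\phi$ then shows that the two automorphic forms on $\widetilde{\symp}_n\left(\mA\right)$ coincide up to an explicit constant depending only on the volume of $\mathrm{O}\left(V\right)\left(\mA\right)$ relative to $\so\left(V\right)\left(\mA\right)$, and the very same constant relates the Petersson norms $\langle\tilde\phi,\tilde\phi\rangle$ on $\mathrm{O}\left(V\right)$ and $\langle\phi,\phi\rangle$ on $\so\left(V\right)$. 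Hence the left-hand side of \eqref{e: rallis inner product} is insensitive to whether it is computed on $\so\left(V\right)$ or on $\mathrm{O}\left(V\right)$; in particular \eqref{e: theta lift} ensures that the $\mathrm{O}\left(V\right)$-theta lift of $\tilde\pi$ is non-zero, so that~\cite[Theorem~6.6]{GT1} applies.

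On the doubling side the two settings are directly compatible. The doubled space $\mathbb W=V\oplus\left(-V\right)$ has even dimension $4n+2$; the embedding $\iota$ factors through $\so\left(\mathbb W\right)$, and the local zeta integral $Z_v\left(s,\phi_v,\phi_v,\Phi_{\tau_v\left(\varphi_v\otimes\varphi_v\right)},\pi_v\right)$ involves $\Phi_v$ only along $\iota\left(\so\left(V,F_v\right),e_v\right)$, so that restricting the degenerate principal series of $\mathrm{O}\left(\mathbb W\right)$ to $\so\left(\mathbb W\right)$ changes nothing, and the $L$-factor produced by the doubling method is the standard $L$-function $L\left(s,\pi\right)$ already appearing in \eqref{e: normalized doubling}. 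The assertion that $Z^\circ\left(\phi_v,\varphi_v,\pi_v\right)=1$ for almost all $v$ is the unramified doubling computation of Piatetski-Shapiro and Rallis, in the form used by Yamana~\cite{Ya} and by Gan and Takeda~\cite{GT1}; I would simply cite it.

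Assembling these ingredients, I would substitute the $\mathrm{O}\left(V\right)$-formula of~\cite[Theorem~6.6]{GT1} for $\tilde\pi$ and translate it back through the two comparisons above, thereby obtaining \eqref{e: rallis inner product}. The step requiring the most care is the bookkeeping of measure normalizations: one must reconcile the Haar measures used in~\cite{GT1} with the Tamagawa measures fixed here, verify that the auxiliary volume constants introduced in passing between $\mathrm{O}\left(V\right)$ and $\so\left(V\right)$ cancel between numerator and denominator, and check that the constant left over is precisely $C_G$ as normalized by $dg=C_G\cdot\prod_v dg_v$. Since both $\so\left(V\right)$ and $\mathrm{O}\left(V\right)$ have Tamagawa number $2$, I expect this cancellation to be clean, so that the constant in \eqref{e: rallis inner product} comes out exactly as stated.
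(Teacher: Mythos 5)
Your strategy of extending $\pi$ to $\mathrm{O}\left(V\right)$ and then quoting the $\left(\mathrm{O}\left(2n+1\right),\widetilde{\symp}_n\right)$ formula of Gan--Takeda as a black box is a genuinely different route from what the paper does. The paper does not pass through the extension to $\mathrm{O}\left(V\right)$; rather, as recorded in its remarks following the theorem, it re-runs the argument for the regularized Siegel--Weil formula of \cite[Proposition~5.3]{GT1} directly for the pair $\left(\so\left(2n+1\right),\widetilde{\symp}_n\right)$, observes that the auxiliary constant $\lambda_2$ of \cite[Lemma~4.3]{GT1} then becomes $\xi\left(1\right)/2\,\xi\left(2\right)$, and concludes that the extraneous factor of $2$ disappears. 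Your reduction buys you the chance to use \cite[Theorem~6.6]{GT1} literally, at the cost of a comparison between the $\mathrm{O}$- and $\so$-sides that the paper's route circumvents.

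The gap in your proposal is precisely in that comparison, and it is not a cosmetic one. First, the extension of $\pi$ to $\mathrm{O}\left(V\right)\left(\mA\right)$ is not unique: there are two, differing by the sign character of the component group, and they behave differently under theta lifting. You implicitly treat ``the extension'' as canonical. Second, and more seriously, the $\mathrm{O}\left(V\right)$-theta lift of $\tilde\phi$ against a test function $\varphi$ is \emph{not} a volume multiple of the $\so\left(V\right)$-theta lift of $\phi$ against the same $\varphi$; unwinding the integral over $\left(\mathrm{O}/\so\right)\left(F\right)\backslash\left(\mathrm{O}/\so\right)\left(\mA\right)$ shows it equals the $\so\left(V\right)$-lift against the projection of $\varphi$ onto the appropriate $\pm$-eigenspace of $-1_V$ in the Weil representation. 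So the two lifts coincide only for even (or odd) $\varphi$; for a general decomposable $\varphi$ one must split into parities and track which parity matches which extension of $\pi$, and one must argue that the resulting pieces do not contaminate the final identity. Your proposal simply asserts that the two automorphic forms ``coincide up to an explicit constant,'' which is false as stated. Third, the assertion that the left side of \eqref{e: rallis inner product} is ``insensitive'' to computing on $\so$ versus $\mathrm{O}$ does not hold as written: the numerator $\left(\theta,\theta\right)$ scales by the square of the volume factor while the denominator $\langle\phi,\phi\rangle$ scales only linearly, so the ratio is sensitive, and the value of the residual volume factor has to be pinned down. Given that the paper's own Remark~(3) records a factor-of-$2$ typo in \cite[Theorem~6.6]{GT1} and the acknowledgements record that an earlier version of this very formula contained an oversight caught by Yamana, the concluding sentence ``I expect this cancellation to be clean'' is not adequate: this is exactly the place where the factor of $2$ lives, and it must be verified, not expected.
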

%
\begin{proof}
Since $\sigma=\Theta_n\left(\pi,\psi\right)$ is cuspidal, by a similar computation as in
the proof of \cite[Proposition~6.1]{GT1},
it is shown that 
\begin{multline*}
\sum_i
\left( \theta_\psi^{\varphi_{1,i}}\left(\phi_1\right),\theta_\psi^{\varphi_{2, i}}\left(\phi_2\right) \right)=
\int_{(G \times G)(F) \backslash (G \times G)(\mA)} \phi_1(g_1) \overline{\phi_2(g_2)} \\
\cdot B_{-1}^{(2n+1, n)} \left(\sum_i \tau \left(\varphi_{1,i} \otimes \overline{\varphi_{2,i}} \right) \right)(\iota(g_1, g_2))\, dg_1 \, dg_2
\end{multline*}
for $\phi_i \in V_\pi$ and $\varphi = \sum_i \tau \left(\varphi_{1,i} 
\otimes \overline{\varphi_{2,i}}\right) \in 
\mathcal{S}^\circ\left(\left(\mathbb{W}^+ \otimes Y_n\right) \left(\mA\right)\right)$ 
such that $\Phi_\varphi = \otimes\, \Phi_v$ is factorizable,.
Then by Proposition~\ref{SW} and the doubling method, we obtain
\[
 \frac{\sum_i \left( \theta_\psi^{\varphi_{1,i}}\left(\phi\right),\theta_\psi^{\varphi_{2, i}}\left(\phi\right) \right)}{\langle\phi_1,\phi_2\rangle}
 =C_G\cdot
 \frac{L\left(1/2,\pi\right)}{\prod_{j=1}^n\,\zeta_F\left(2j\right)}
 \cdot\prod_v Z_v^\natural \left(0, \phi_{1, v}, \phi_{2, v}, \Phi_v,\pi_v\right)
\]
where
we define $Z_v^\natural \left(0, \phi_{1, v}, \phi_{2, v}, \Phi_v,\pi_v\right)$ by
\[
Z_v^\natural \left(0, \phi_{1, v}, \phi_{2, v}, \Phi_v,\pi_v\right)
=
\frac{\prod_{j=1}^n\zeta_{F_v}\left(2j\right)}{L\left(1\slash 2, \pi_v\right)}
\cdot \frac{1}{\langle\phi_{1, v},\phi_{2, v}\rangle_v}\cdot
Z_v\left(0, \phi_{1, v}, \phi_{2, v},\Phi_v,
\pi_v\right),
\]
in the same manner as \eqref{e: normalized doubling}.
Further, we may extend the  formula above to the whole space $\mathcal{S}\left(\left(V \otimes Y_n^+\right)\left(\mA\right)\right)$
by a simple argument as remarked in \cite[p.~243]{GT1}
and  \eqref{e: rallis inner product} holds.
\end{proof}
%
\begin{Remark}
 We note that there is a typo in
 the Rallis inner product formula stated in \cite[Theorem~6.6]{GT1}.
 It needs to be remedied as follows.
 There the Petersson inner product 
 of the theta lifts is essentially equal to
 $2$ times a certain  $L$-value.
 However the Siegel-Weil formula 
 \cite[Proposition~5.3]{GT1} implies that it is  essentially equal to
 $2^{-1}$ times the  $L$-value instead.
\end{Remark}
%
%
%
%
%
%
%
%
%
%
%
\subsection{Whittaker periods of
cusp forms on the metaplectic groups}
\label{ss: lapid mao}
We recall the Ichino-Ikeda type formula proved by  Lapid and Mao~\cite{LM}
for the Whittaker periods of cusp forms on the metaplectic groups.

Since $\widetilde{\mathrm{Sp}}_n$ splits trivially
over unipotent subgroups of $\mathrm{Sp}_n$ both locally and globally,
we regard these subgroups as subgroups of $\widetilde{\mathrm{Sp}}_n$.
For $a\in\gl_n$ and a symmetric $n\times n$ matrix $S$,
we denote by  $m\left(a\right)$ and $v\left(S\right)$
the elements of $\symp_n$ given by
\[
m\left(a\right)=\begin{pmatrix}a&0\\0&{}^ta^{-1}\end{pmatrix},
\quad
v\left(S\right)=\begin{pmatrix}1_n&S\\0&1_n\end{pmatrix}
\]
respectively.
Let $U_{\mathrm{Sp}}=\left\{v\left(S\right): {}^tS=S\right\}$
and $U_n$ the group of upper unipotent matrices in $\gl_n$.
Then the standard maximal unipotent subgroup $N=N_n$ of $\symp_n$
is given by 
\begin{equation}\label{e: unipotent decomposition}
N=m\left(U_n\right)U_{\mathrm{Sp}}.
\end{equation}
We define a character $\psi_\lambda$ of $N\left(\mathbb A\right)$
by
\begin{equation}\label{e: psi_lambda}
\psi_\lambda\left(m\left(u\right)v\left(S\right)\right)
=\psi\left(u_{1,2}+\cdots +u_{n-1,n}+\frac{\lambda}{2}s_{n,n}\right)
\end{equation}
where $u_{i,j}$ denotes the $\left(i,j\right)$-entry of $u$
and $s_{n,n}$ the $\left(n,n\right)$-entry of $S$.
%
\begin{Definition}\label{d: whittaker period}
For an automorphic form $\tilde{\phi}$
on $\widetilde{\mathrm{Sp}}_n\left(\mA\right)$,
its 
$\psi_\lambda$-Whittaker period $W\left(\tilde{\phi};\psi_\lambda\right)$
is defined by
\begin{equation}\label{e: metaplectic Whittaker}
W\left(\tilde{\phi};\psi_\lambda\right):=
\int_{N\left(F\right)\backslash N\left(\mA\right)}\tilde{\phi}\left(n\right)
\psi_\lambda\left(n\right)^{-1}\, dn.
\end{equation}

An automorphic representation of 
$\widetilde{\mathrm{Sp}}_n\left(\mA\right)$ is called  \emph{$\psi_\lambda$-generic}
when  $W\left(\,\,\,;\psi_\lambda\right)$
does not vanish identically
on its space of automorphic forms.
\end{Definition}

As we noted \eqref{e: theta lift}, 
\begin{equation}\label{e: genericiy}
\text{$\sigma=\Theta_n\left(\pi,\psi\right)$
is $\psi_\lambda$-generic, irreducible and cuspidal.}
\end{equation}
Let $\sigma=\otimes_v\,\sigma_v$.
Then by Adams and Barbasch~\cite{AB},
\begin{equation}\label{e: discrete series}
\text{
$\sigma_v$ is a discrete series representation
at any archimedean place $v$}
\end{equation}
 since so is $\pi_v$.
Let $\pi^\circ$ be the theta lift of  $\sigma$ to $\mathbb{G} \left(\mA \right)$ with respect to $\psi^{-\lambda}$,
which is globally generic by \cite[Proposition~1, 3]{FM0}.
Let $\Sigma=\Pi\otimes\chi_E$ where $\Pi$ is a weak lift of $\pi$
to $\gl_{2n}\left(\mA\right)$.
Then by \cite[Lemma~1]{FM0} and its proof, we have
\[
L\left(s,\Sigma_v\right)=L\left(s,\pi_v^\circ \right)
=L\left(s,\pi_v\times\chi_{E,v}\right)
\]
at every place $v$.
Thus we may say that $\Sigma$ is a weak lift of $\pi^\circ$
to $\gl_{2n}\left(\mA\right)$.

At each place $v$, 
we choose  a $\widetilde{\mathrm{Sp}}_n\left(F_v\right)$-invariant
Hermitian inner product $\left(\,\, , \,\,\right)_v$  on $V_{\sigma_v}$
so that
we have $\left(\tilde{\varphi}_1,\tilde{\varphi}_2\right)=
\prod_v\left(\tilde{\varphi}_{1,v},\tilde{\varphi}_{2,v}\right)_v$
for any decomposable vectors $\tilde{\varphi}_1=\otimes_v
\,\tilde{\varphi}_{1,v}$, $\tilde{\varphi}_2=\otimes_v
\,\tilde{\varphi}_{2,v}\in V_\sigma$.

Then by Lapid and Mao~\cite[Corollary~1.4]{LM},
we have the following theorem.
%
\begin{theorem}\label{t: LM formula}
For any non-zero decomposable cusp form
 $\tilde{\varphi}=\otimes_v\,
\tilde{\varphi}_v\in V_\sigma$, we have
\begin{equation}\label{e: LM formula}
\frac{\left| W\left(\tilde{\varphi};\psi_\lambda\right)\right|^2}{
\left(\tilde{\varphi},\tilde{\varphi}\right)}
=
2^{-l}\cdot \frac{L\left(1\slash 2, \pi\times\chi_E\right)
\prod_{j=1}^n\zeta_F\left(2j\right)}{L\left(1,\pi,\mathrm{Ad}\right)}
\cdot\prod_v I_v\left(\tilde{\varphi}_v\right)
\end{equation}
where $I_v\left(\tilde{\varphi}_v\right)$ is the stable integral
defined by
\begin{equation}\label{e: LM local integral}
I_v\left(\tilde{\varphi}_v\right):=
\frac{L\left(1,\pi_v, \mathrm{Ad}\right)}{
L\left(1\slash 2,\pi_v\times\chi_{E,v}\right)
\prod_{j=1}^n\zeta_{F_v}\left(2j\right)}
\int_{N_v}^{\mathrm{st}}
\frac{\left(\sigma_v\left(n_v\right)\tilde{\varphi}_v,
\tilde{\varphi}_v\right)_v}{\left(\tilde{\varphi}_v,
\tilde{\varphi}_v\right)_v}
\cdot \psi_\lambda^{-1}\left(n_v\right)\, dn_v
\end{equation}
and we have 
\begin{equation}\label{e: almost all}
I_v\left(\tilde{\varphi}_v\right)=1
\quad\text{for almost all places $v$ of $F$.}
\end{equation}
\end{theorem}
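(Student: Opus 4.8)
The plan is to derive Theorem~\ref{t: LM formula} from the Ichino--Ikeda type formula for Whittaker--Fourier coefficients of cusp forms on metaplectic groups proved by Lapid and Mao~\cite[Corollary~1.4]{LM}; what remains is to check that its hypotheses are met by $\sigma=\Theta_n\left(\pi,\psi\right)$ and to translate its conclusion from the $\sigma$-side to the $\pi$-side. First I would record the general Lapid--Mao formula in our notation: for an irreducible genuine cuspidal automorphic representation $\sigma$ of $\widetilde{\symp}_n\left(\mA\right)$ that is generic with respect to a given generic character, the square of the associated Whittaker--Fourier coefficient divided by the Petersson norm equals an explicit positive rational constant, times the central value of the standard $L$-function of $\sigma$ divided by its symmetric-square $L$-function, times the archimedean factor $\prod_{j=1}^n\xi_F\left(2j\right)$, times a product over all places of normalized local integrals, almost all of which are equal to $1$. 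By \eqref{e: theta lift}, $\sigma$ is irreducible, cuspidal and $\psi_\lambda$-generic, so the formula applies with the generic character $\psi_\lambda$; and by Adams and Barbasch~\cite{AB}, $\sigma_v$ is a discrete series representation at every real place $v$, which ensures the good behaviour of the local integral $I_v$ at the archimedean places, the stable integral in \eqref{e: LM local integral} being in fact absolutely convergent there, in analogy with Liu~\cite[Proposition~3.5]{Liu}.

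Next I would carry out the identification of the $L$-functions. By \cite[Proposition~1, 3]{FM0} the theta lift $\pi^\circ=\Theta\left(\sigma,\psi^{-\lambda}\right)$ to $\mathbb G\left(\mA\right)$ is globally generic, and by \cite[Lemma~1]{FM0} and its proof its weak lift to $\gl_{2n}\left(\mA\right)$ is $\Sigma=\Pi\otimes\chi_E$ with $L\left(s,\Sigma_v\right)=L\left(s,\pi_v\times\chi_{E,v}\right)$ at every place $v$; thus the standard $L$-function of $\sigma$ entering the Lapid--Mao formula has central value $L\left(1/2,\pi\times\chi_E\right)$. Since $\chi_E^2=\mathbf 1$, one has $\mathrm{Sym}^2\left(\Pi\otimes\chi_E\right)\simeq\mathrm{Sym}^2\Pi$, so the symmetric-square $L$-function of the lift is $L\left(s,\Pi,\mathrm{Sym}^2\right)$, which equals $L\left(s,\pi,\mathrm{Ad}\right)$ by \eqref{sym=ad}; the same identification shows that the normalizing factors $L\left(1,\pi_v,\mathrm{Ad}\right)$ and $\prod_{j=1}^n\zeta_{F_v}\left(2j\right)$ in \eqref{e: LM local integral}, together with $\prod_{j=1}^n\xi_F\left(2j\right)$, are precisely those appearing in the metaplectic Lapid--Mao formula, so that $I_v\left(\tilde\varphi_v\right)$ coincides with its normalized local term and \eqref{e: almost all} is the corresponding unramified computation (a Casselman--Shalika type evaluation at good places).

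It then remains to pin down the constant as $2^{-l}$. In the Lapid--Mao formula this constant is the reciprocal of the order of the component group attached to the (Arthur) parameter of $\sigma$, which corresponds to the isobaric sum $\Sigma=\boxplus_{i=1}^l\left(\pi_i\otimes\chi_E\right)$. Each $\pi_i\otimes\chi_E$ is a cuspidal representation of $\gl_{2n_i}\left(\mA\right)$ of symplectic type, since $L\left(s,\pi_i\otimes\chi_E,\wedge^2\right)=L\left(s,\pi_i,\wedge^2\right)$ (again because $\chi_E^2=\mathbf 1$) has a pole at $s=1$, and these summands are pairwise inequivalent; hence the component group is an elementary $2$-group of order $2^l$ and the constant is $2^{-l}$, in agreement with the remark following Theorem~\ref{t: main theorem} that $2^l=\left|\mathcal S\left(\Psi\left(\pi\right)\right)\right|$.

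I expect the main obstacle to lie in the careful bookkeeping of normalizations rather than in any conceptual point: one must reconcile the Tamagawa measure on $\symp_n\left(\mA\right)$ and the chosen measure on $N_v$, the factorization $\left(\tilde\varphi,\tilde\varphi\right)=\prod_v\left(\tilde\varphi_v,\tilde\varphi_v\right)_v$, the conventions for the metaplectic two-fold cover, and the use of the generic character $\psi_\lambda$ with its half-integral entry $\frac{\lambda}{2}s_{n,n}$ in place of a standard one, with the normalizations adopted in \cite{LM}. In particular one has to verify that no spurious power of $2$ is introduced --- from the metaplectic cover, from the choice of $\psi_\lambda$, or from the passage between $\mathrm{O}$ and $\mathrm{SO}$ that enters when identifying the functorial lift of $\sigma$ through the theta correspondence --- so that the final constant is exactly $2^{-l}$ as stated.
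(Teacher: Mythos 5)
Your proposal matches the paper's own argument: the theorem is obtained exactly as you describe, by citing Lapid--Mao~\cite[Corollary~1.4]{LM} for $\sigma=\Theta_n\left(\pi,\psi\right)$, using \eqref{e: theta lift} together with Adams--Barbasch~\cite{AB} for the discrete series condition at real places, and identifying the $L$-functions through the $\psi^{-\lambda}$-theta lift $\pi^\circ$ and its weak lift $\Sigma=\Pi\otimes\chi_E$ from \cite{FM0}, so that the constant is $2^{-l}$ and the normalized local factors are \eqref{e: LM local integral}. The only small difference is that the paper attributes the unramified evaluation \eqref{e: almost all} to Ginzburg--Rallis--Soudry~\cite{GRS11} rather than redoing a Casselman--Shalika type computation.
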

%
\begin{Remark}\label{r: stability}
When $v$ is non-archimedean, 
the integrand of \eqref{e: LM local integral} does have a stable integral over $N_v$
by \cite[Proposition~2.3]{LM0}.
When $v$ is archimedean, by \eqref{e: discrete series},
the integrand of \eqref{e: LM local integral}
is integrable over $N_v$ as explained in \cite[p.~455]{LM0}.
Thus $I_v\left(\tilde{\varphi}_v\right)$ at an archimedean place $v$ is
indeed given by the absolutely convergent integral over $N_v$.
\end{Remark}
The assertion \eqref{e: almost all} 
was actually proved by Ginzburg, Rallis and 
Soudry~\cite{GRS11} prior to \cite{LM0}.
Also 
we recall that for  any place $v$, we have
\begin{equation}\label{e: metaplectic mult one}
\dim_{\mathbb C}\mathrm{Hom}_{N_v}
\left(\sigma_v,\psi_{\lambda,v}\right)\le 1
\end{equation}
by Szpruch~\cite[Theorem~3.1]{Sz}
and Liu and Sun~\cite[Theorem~A]{LS}
for the non-archimedean case and the archimedean case,
respectively.
%
%
%
%
%
%
%
%
%
%
%
%
\subsection{Pull-back of the $\psi_\lambda$-Whittaker period}
\label{sect;whittaker}
Since $B_{\lambda,\psi}\not\equiv 0$ on $V_\pi$,
we have
$\operatorname{Hom}_{R_{\lambda,v}}
\left(\pi_v,\chi_{\lambda, v}\right)\ne\left\{0\right\}$ for any place $v$
of $F$.
Hence when $v$ is non-archimedean, $\alpha_v\not\equiv 0$
by \eqref{e: multiplicity one}.
\emph{Here we proceed further assuming the statement (1)
of Theorem~\ref{t: main theorem},
i.e. $\alpha_v\not\equiv 0$ at \emph{any} place $v$ of $F$, which we shall
prove later in 
\ref{ss: local GGP real}.}
%

By the multiplicity one property \eqref{uniqueness}  of the special Bessel model,
there exists $C\in\mathbb C^\times$ such that
\begin{equation}\label{e: uniqueness of special Bessel}
B_{\lambda,\psi}\left(\phi\right)
\cdot\overline{B_{\lambda,\psi}\left(\phi^\prime\right)}
=C\cdot\prod_v\,\alpha_v^{\natural}\left(\phi_{v},\phi_{v}^\prime\right)
\end{equation}
for any non-zero decomposable cusp forms $\phi=\otimes_v\, \phi_{v}$,
$\phi^\prime=\otimes_{v}\, \phi_{v}^\prime\in V_\pi$.
Here we note that $\alpha_v^\natural\left(\phi_{v} ,\phi_{v}^\prime \right)=1$
for almost all $v$.
In particular when $\phi=\phi^\prime$, 
we have
\begin{equation}\label{e: uniqueness of special Bessel3}
B_{\lambda,\psi}\left(\phi\right)
\cdot\overline{B_{\lambda,\psi}\left(\phi\right)}
=C\cdot\prod_v\,\alpha_v^\natural\left(\phi_v,\phi_v\right).
\end{equation}
When $B_{\lambda,\psi}\left(\phi\right)=0$,
we have $\alpha_v^\natural\left(\phi_v,\phi_v\right)=0$ at some place $v$
by \eqref{e: uniqueness of special Bessel3}.
Hence both  sides of \eqref{e: main identity} vanish
and the equality \eqref{e: main identity} holds.
Thus from now on we suppose that $B_{\lambda,\psi}\left(\phi\right)\ne 0$.
Since $\alpha_v^\natural(\phi_v, \phi_v) \ne 0$ for any $v$ by 
\eqref{e: uniqueness of special Bessel3}, we may define
\begin{equation}\label{e: alpha-knot}
\alpha_v^\circ\left(g_v;\phi_v \right):
=\alpha_v^\natural\left(\pi_v\left(g_v\right)\phi_{v},\phi_{ v} \right)\slash \alpha_v^\natural\left(
\phi_v,\phi_v \right)\quad\text{for $g_v\in G_v$}
\end{equation}
 for every place $v$ of $F$.
Then from \eqref{e: uniqueness of special Bessel}, for $g=\left(g_v\right)\in G\left(\mA\right)$,
we have
\begin{align*}
B_{\lambda,\psi}\left(\pi\left(g\right)\phi\right)\cdot
\overline{B_{\lambda,\psi}\left(\phi\right)}&=
C\cdot\prod_{v}
\alpha_v^\natural\left(\pi_v\left(g_v\right)\phi_v,\phi_v\right)
\\
&=B_{\lambda,\psi}\left(\phi\right)\cdot\overline{
B_{\lambda,\psi}\left(\phi\right)}
\cdot\prod_v \alpha_v^\circ\left(g_v;\phi_v\right)
\end{align*}
where $\alpha_v^\circ\left(g_v;\phi_v\right)=1$  for almost all $v$.
Hence
\begin{equation}\label{e: uniqueness of special Bessel2}
B_{\lambda,\psi}\left(\pi\left(g\right)\phi\right)=
B_{\lambda,\psi}\left(\phi\right)\cdot
\prod_v\,\alpha_v^\circ\left(g_v;\phi_v\right).
\end{equation}
%

Now we recall the pull-back formula for the $\psi_\lambda$-Whittaker
period. 
We identify $V\otimes Y_n^+$ with $V^n$.
The group $G$ acts on $V^n$ from the left by
\begin{equation}
\label{G act V}
g \cdot x=\left(gv_1,\cdots ,gv_n\right)\quad
\text{for $x=\left(v_1,\cdots, v_n\right)\in V^n$.}
\end{equation}
Then
by \cite[(9)]{Fu}, for $\varphi\in\mathcal S\left(V\left(\mA\right)^n\right)$,
we have
\begin{equation}\label{e: pull-back0}
W\left(\theta_\psi^\varphi\left(\phi\right);\psi_\lambda\right)=
\int_{R_\lambda^\prime\left(\mA\right)\backslash G\left(\mA\right)}
\varphi\left(g^{-1} \cdot \left(e_{-1}, \cdots , e_{-n+1},e_\lambda\right) \right)
B_{\lambda,\psi}\left(\pi\left(g\right)\phi\right)\, dg
\end{equation}
where 
\begin{equation}\label{e: stabilizer}
R_\lambda^\prime:=
\left\{g\in G:
\text{$ge_{-j}=e_{-j}$ for $1\le j\le n-1$,
$ge_\lambda=e_\lambda$}
\right\}.
\end{equation}
The  integral~\eqref{e: pull-back0} is well-defined since $\chi_\lambda$ is trivial on $R_\lambda^\prime(\mA)$. 
Here we note that
\begin{equation}\label{e: stabilizer1}
R_\lambda^\prime=D_\lambda S^\prime_\lambda
\quad
\text{where 
$S^\prime_\lambda=\left\{
\begin{pmatrix}1_{n-1}&A&B\\0&1_3&A^\prime\\
0&0&1_{n-1}\end{pmatrix}\in S^\prime
: Ae_\lambda=0\right\}$.}
\end{equation}

For each $j$ $\left(1\le j\le n-1\right)$, let $L_j$ be the subspace of $L$ spanned
by $e_{-j}$, $e_\lambda$ and $e_j$.
Then for $a\in F$, let $s_j\left(a\right)$ denote the element of $G$ such that
\begin{equation}\label{e: s_j}
s_j\left(a\right)\mid_{L_j^\perp}=1_{L_j^\perp}
\quad\text{and}\quad
s_j\left(a\right)\mid_{L_j}=\begin{pmatrix}1&a&-\lambda^{-1}a^2/2\\
0&1&-\lambda^{-1}a\\0&0&1\end{pmatrix}
\end{equation}
with respect to the basis $\left\{e_{-j},e_\lambda, e_j\right\}$ of $L_j$.
Also for each $j$ $\left(1\le j\le n-1\right)$, let us define a subgroup $S_j^\prime$  
of $S^\prime$ by
\[
S_j^\prime:=
\left\{\begin{pmatrix}1_{n-1}&A&B\\0&1_3&A^\prime\\0&0&1_{n-1}\end{pmatrix}
: Ae_\lambda\in  Fe_{-1}+\cdots +Fe_{-j}
\right\}
\]
and $S_0^\prime:= S_\lambda^\prime$.
We recall that  $S^\prime$ has a filtration
\begin{equation}\label{e: filtration of S prime}
S_\lambda^\prime=S_0^\prime\lhd S_1^\prime\lhd \cdots \lhd S_{n-1}^\prime=S^\prime
\end{equation}
 and we have 
\begin{equation}\label{e: S quotient}
S_{j-1}^\prime\backslash S_{j}^\prime \simeq\left\{s_j\left(a\right): a\in F\right\}.
\end{equation}
We also  note the induced filtration of $R_\lambda$, namely
\[
R_\lambda^\prime = D_\lambda S_\lambda^\prime=
D_\lambda S_0^\prime\lhd D_\lambda S_1^\prime\lhd \cdots \lhd D_\lambda S_{n-1}^\prime = D_\lambda S^\prime \lhd D_\lambda S^\prime S^{\prime \prime}
 =R_\lambda.
\]
%

\emph{
Let  $\varphi\in\mathcal S\left(V\left(\mathbb A\right)^n\right)$ be of the form $\varphi=\otimes_v\,\varphi_v$
where $\varphi_v\in\mathcal S\left(V\left(F_v\right)^n\right)$
and suppose
that
the local integral 
\begin{equation}\label{e: lambda}
\mathcal{L}_v\left(\varphi_v;\phi_v\right):=
\int_{R^\prime_{\lambda,v}\backslash G_v}
\varphi_v\left(g_v^{-1} \cdot \left(e_{-1}, \cdots , e_{-n+1},e_\lambda\right) \right)
\alpha_v^\circ\left(g_v;\phi_v\right)
\, dg_v
\end{equation}
converges absolutely and $\mathcal{L}_v\left(\varphi_v;\phi_v\right)\ne 0$
at each place $v$.}
Then  $\mathcal{L}_v\left(\varphi_v;\phi_v\right)=1$ for almost all $v$
and we may rewirite
\eqref{e: pull-back0} as
\begin{equation}\label{e: pull-back1}
W\left(\theta_\psi^\varphi\left(\phi\right);\psi_\lambda\right)=
\left(C_G\,C_\lambda^{-1}\right)\cdot
B_{\lambda,\psi}\left(\phi\right)\cdot
\prod_v\, \mathcal{L}_v\left(\varphi_v;\phi_v\right)
\end{equation}
and we have
 $W\left(\theta_\psi^\varphi\left(\phi\right);\psi_\lambda\right)\ne 0$.

Let $\Theta\left(\pi_v,\psi_v\right):=\operatorname{Hom}_{G_v}
\left(\omega_{\psi_v},\bar{\pi}_v\right)$
where $\omega_{\psi_v}$ is the local Weil representation
of $G_v\times\widetilde{\mathrm{Sp}}_n\left(F_v\right)$
realized on $\mathcal S\left(V\left(F_v\right)^n\right)$,
the space of Schwartz-Bruhat functions on $V\left(F_v\right)^n$.
As in the global case (e.g. see \cite[p.94]{Fu}), the action of $G_v\times\widetilde{\mathrm{Sp}}_n\left(F_v\right)$
via $\omega_{\psi_v}$  on $\varphi\in \mathcal S\left(V\left(F_v\right)^n\right)$
is given by the following formulas:
\begin{subequations}
\label{weil action}
%
\begin{equation}\label{weil action-a}
\omega_{\psi_v}(g, 1) \varphi(x) = \varphi(g^{-1} \cdot x), \quad g \in G_v,
\end{equation}
%
\begin{equation}\label{weil action-b}
 \omega_{\psi_v}(1, (m(a), \varepsilon)) \varphi(x)
= \varepsilon\, \frac{\gamma_\psi(1)}{\gamma_\psi((\det a)^{2n+1})} |\det a|^{n+\frac{1}{2}} \varphi(xa),\quad a \in \mathrm{GL}_n(F_v),
\end{equation}
%
\begin{equation}\label{weil action-c}
 \omega_{\psi_v}(1, v(S)) \varphi(x)
= \psi_v \left(\frac{1}{2} \mathrm{tr} \left(\mathrm{Gr}(x)S \right) \right) \varphi(x),
\end{equation}
%
\end{subequations}
where $\gamma_\psi$ denotes the Weil constant and
$\mathrm{Gr}\left(x\right)$ denotes the Gram matrix
$\left(\left(x_i,x_j\right)\right)$ 
for
$x  = (x_1, \dots, x_n) \in V(F_v)^n$.
We recall that for $\sigma=\Theta_n\left(\pi,\psi\right)$,
we have $\sigma=\otimes_v\,\sigma_v$
where $\sigma_v=\theta\left(\pi_v,\psi_v\right)$,
the unique
irreducible quotient of $\Theta\left(\pi_v,\psi_v\right)$
determined by the Howe duality.
The Howe duality was proved by
 Howe~\cite{Ho1} at archimedean places, by Waldspurger~\cite{Wa}
 at odd non-archimedean places and finally by Gan and Takeda~\cite{GT}
 at all non-archimedean places, respectively.
Let 
\[
\theta_v:\mathcal S\left(V\left(F_v\right)^n\right)\otimes V_{\pi_v}\to
V_{\sigma_v}
\]
be the $G_v\times\widetilde{\mathrm{Sp}}_n\left(F_v\right)$-equivariant
linear map, which is unique up to multiplication by a scalar.
Since the mapping
\[
\mathcal S\left(V\left(\mA\right)^n\right)\otimes V_\pi
\ni\left(\varphi^\prime,\phi^\prime\right)\mapsto
\theta_\psi^{\varphi^\prime}\left(\phi^\prime\right)\in V_\sigma
\]
is $G_v\times\widetilde{\mathrm{Sp}}_n\left(F_v\right)$-equivariant  at any place $v$,
by the uniqueness of $\theta_v$, we may adjust $\left\{\theta_v\right\}_v$ so that 
\[
\theta_\psi^{\varphi^\prime}\left(\phi^\prime\right)=\otimes_v\,
\theta_v\left(\varphi_v^\prime\otimes\phi_v^\prime\right)
\quad\text{for $\phi^\prime=\otimes_v\,\phi_v^\prime\in V_\pi$
and $\varphi^\prime=\otimes_v\, \varphi_v^\prime
\in\mathcal S\left(V\left(\mA\right)^n\right)$.}
\]
%
%
%
Hence by combining \eqref{e: rallis inner product}, \eqref{e: LM formula}
and \eqref{e: pull-back1},
we have
\begin{multline}\label{e: preformula}
\frac{\left|B_{\lambda,\psi}\left(\phi\right)\right|^2}{
\langle\phi,\phi\rangle}=C_G^{-1}\,C_\lambda^2\cdot 2^{-l}
\cdot
\frac{L\left(1/2,\pi\right)\,L\left(1/2,\pi\times\chi_E\right)}{
L\left(1,\pi,\operatorname{Ad}\right)}
\\
\times
\prod_v
\,
\frac{Z_v^\circ\left(\phi_v,\varphi_v,\pi_v\right)\, I_v\left(
\theta_v\left(\varphi_v\otimes\phi_v\right)\right)}{
\left| \mathcal{L}_v\left(\varphi_v;\phi_v\right)\right|^2}
\end{multline}
where
\[
\frac{Z_v^\circ\left(\phi_v,\varphi_v,\pi_v\right)\, I_v\left(
\theta_v\left(\varphi_v\otimes\phi_v\right)\right)}{
\left| \mathcal{L}_v\left(\varphi_v;\phi_v\right)\right|^2}=1
\]
for almost all $v$.
%
%
%

Since the right hand side of \eqref{e: main identity}
does not depend on the decompositions of the global
Tamagawa measures 
\eqref{e: measure comparison}, we may 
take specific local measures
which are suitable for our further considerations on the local
integrals appearing on the right hand side of \eqref{e: preformula}.
In Section~\ref{s: local identity}, we shall specify local measures
and show that we have
\begin{equation}\label{e: explicit constant}
C_G^{-1}\, C_\lambda=
\frac{\prod_{j=1}^n\zeta_F\left(2j\right)}{L\left(1,\chi_E\right)}
\end{equation}
and the following proposition holds.
%
\begin{proposition}
\label{prp: local equality}
Let $v$ be an arbitrary place of $F$.
For a given $\phi_v \in V_{\pi_v}$ satisfying $\alpha_v\left(\phi_v,\phi_v\right)\ne 0$, there exists $\varphi_v \in \mathcal{S}(V\left(F_v\right)^n)$
such that the local integral $\mathcal{L}_v\left(\varphi_v;\phi_v\right)$
converges absolutely, $\mathcal{L}_v\left(\varphi_v;\phi_v\right) \ne 0$
 and the  equality
\begin{equation}\label{e: local equality}
\frac{Z_v^\circ\left(\phi_v,\varphi_v,\pi_v\right)\, I_v\left(
\theta_v\left(\varphi_v\otimes\phi_v\right)\right)}{
\left| \mathcal{L}_v\left(\varphi_v;\phi_v\right)\right|^2}
=\frac{\alpha_v^\natural\left(\phi_v,\phi_v\right)}{
\langle\phi_v,\phi_v\rangle_v}
\end{equation}
holds
 with respect to the specified local measures.
\end{proposition}
%
Then for  $\phi=\otimes_v\,\phi_v\in V_\pi$ such that $B_{\lambda,\psi}\left(\phi\right)
\ne 0$, 
it is clearly seen  that the main identity \eqref{e: main identity} holds
by taking $\varphi_v \in \mathcal{S}(V\left(F_v\right)^n)$
as in Proposition~\ref{prp: local equality} for each place $v$
and by combining \eqref{e: preformula}, \eqref{e: explicit constant}
and \eqref{e: local equality}.
%
%
%
%
%
%
%
%
\section{Proof of the local equality}\label{s: local identity}
%
%
%
%
\subsection{Specification of local measures}\label{ss: Haar measure constants}
Recall that the group $G$ acts on $V^n$ from the left 
 by \eqref{G act V}.
Let 
\begin{equation}\label{e: x_0}
x_0:=\left(e_{-1},\cdots, e_{-n+1},e_\lambda\right)\in V^n
\end{equation}
and $\mathcal X_\lambda:=G \cdot x_0\subset V^n$.
Since $R^\prime_\lambda$ defined by \eqref{e: stabilizer} is the stabilizer of $x_0$, 
$R_\lambda^\prime\backslash G\ni g\mapsto g^{-1}\cdot x_0\in \mathcal X_\lambda$
is a $G$-homogeneous space isomorphism with the right action of $G$ on $\mathcal X_\lambda$
given by $\mathcal X_\lambda\ni x\mapsto g^{-1}\cdot x\in \mathcal X_\lambda$.
We note that $\mathcal X_\lambda$ is a locally closed subvariety of  $V^n$
since $\mathcal X_\lambda$ is a set of $x=\left(x_1,x_2,\cdots, x_n\right)\in V^n$
such that $\mathrm{Gr}\left(x\right)=\mathrm{Gr}\left(x_0\right)$
and $x_1,x_2,\cdots , x_n$ are linearly independent,
 by Witt's theorem.

Let $\omega$ and $\omega_G$
be non-zero gauge forms on $V^n$ and $G$, respectively.
Let $\omega_0$ be the gauge form on $\mathcal X_\lambda$
given by pulling back $\omega$ via the inclusion $\mathcal{X}_\lambda \hookrightarrow V^n$.
We choose a gauge form $\omega_\lambda$ on $R^\prime_\lambda$
such that $\omega_G$, $\omega_0$ and $\omega_\lambda$
match algebraically in the sense of Weil~\cite[p.24]{Weil},
i.e. $\omega_G=\omega_0\,\omega_\lambda$.
Also, we denote by $\omega_D$ the gauge form given by pulling back $\omega_\lambda$ via $D \hookrightarrow R_\lambda^\prime$.

In \cite{Gr}, Gross associated a motive of Artin-Tate type to a connected reductive
algebraic group over $F$.
Thus let $M_G$ be the motive associated to $G$ and $M_G^\vee\left(1\right)$ its twisted
dual motive.
The local Tamagawa measure $dg_v$ on $G_v$ corresponding to $\omega_G$
is given by $dg_v = L_v(M_{G}^\vee(1)) \cdot \left|\omega_G\right|_v$
at each place $v$ of $F$.
We refer to Gross~\cite{Gr} and Rogawski~\cite[1.7]{Rogawski} 
for the details concerning the definition of local Tamagawa measures.
Then the Haar measure constant $C_G$ defined by 
$dg=C_G\prod_v \,dg_v$,
where $dg$ is the Tamagawa measure, is given by
\begin{equation}\label{e: Haar measure constant 1}
C_G=\left(\prod_{j=1}^n\,\zeta_F\left(2j\right)\right)^{-1}.
\end{equation}

Similarly we specify the measure $dt_v$ on $D_{\lambda,v}$
to be the local Tamagawa measure corresponding to $\omega_D$
at each place $v$.
Then the Haar measure constant $C_\lambda$ defined by
$dt=C_\lambda\prod_v\, dt_v$, where
$dt$ is the Tamagawa measure, is given by
\begin{equation}\label{e: Haar measure constant 2}
C_\lambda=\frac{1}{L\left(1,\chi_E\right)}.
\end{equation}

For the unipotent group $S^\prime_{\lambda}$,
let the measure $ds^\prime_v$ on $S^\prime_{\lambda,v}$ at each place $v$
to be the measure specified in \ref{ss: notation} for unipotent groups.
We define the measure $dr^\prime_v$ on $R^\prime_{\lambda, v}=
D_{\lambda,v}S^\prime_{\lambda, v}$ by $dr^\prime_v=dt_v\, ds^\prime_{\lambda,v}$.
Finally we take the quotient measure $dh_v$
on $R_{\lambda, v}^\prime \backslash G_v$ so that 
\begin{equation}\label{e: quotient measure}
dg_v=dh_v\,dr^\prime_v
\end{equation}
holds.
It is clear from \eqref{e: Haar measure constant 1} and
\eqref{e: Haar measure constant 2} that 
the equality \eqref{e: explicit constant} holds
with these choices of the local measures.
%
%
%
%
%
%
%
%
%
%
%
%
%
%
%
%
%
%
%
%
%
%
%
%
%
\subsection{Two sesquilinear forms on $V_\sigma$}
\emph{Since our consideration is purely local from now
on till the end of \ref{ss: proposition 4}, 
we suppress the subscript $v$ expressing a place from the notation,
e.g. $F$ now denotes a local field.}

We construct two sesquilinear forms on $V_\sigma$ which satisfy the same
transformation property with respect to the subgroup $N$
of $\widetilde{\symp}_n\left(F\right)$.
%
\subsubsection{Sesquilinear form $\mathcal W$}\label{sss: W}
First we define a Hermitian inner product 
$\mathcal B_\omega$ on $\mathcal S\left(V^n\right)$ by
%
\begin{equation}\label{e: weil hermitian1}
\mathcal B_\omega\left(\varphi,\varphi^\prime\right):=
\int_{V^n}
\varphi\left(x\right)\,\overline{\varphi^\prime\left(x\right)}\,
dx
\quad\text{for $\varphi,\varphi^\prime\in \mathcal S\left(V^n\right)$}
\end{equation}
where $dx$ denotes the measure
corresponding to the gauge form $\omega$ on $V^n$
in \ref{ss: Haar measure constants}.
Then Liu~\cite[Lemma~3.19]{Liu} proved that the integral
\begin{equation}\label{e: weil hermitian2}
Z^\flat\left(\phi, \phi^\prime; \varphi, \varphi^\prime\right) 
= \int_{G} 
\langle\pi\left(g\right)\phi,\phi^\prime\rangle\,
\mathcal B_\omega\left(\omega_\psi\left(g\right)\varphi,\varphi^\prime\right)
\, dg
\end{equation}
converges absolutely for $\phi,\phi^\prime\in V_\pi$ and 
$\varphi,\varphi^\prime\in\mathcal S\left(V^n\right)$.
We note that our setting belongs to Case 2 
in the proof of \cite[Lemma~3.19]{Liu}.

As in Gan and Ichino~\cite[16.5]{GI}, 
there exists uniquely
an $\widetilde{\symp}_n\left(F\right)$-invariant
Hermitian inner product  
$\mathcal B_\sigma:V_\sigma\times
V_\sigma\to\mathbb C$ satisfying
\[
\mathcal B_\sigma\left(\theta\left(\varphi\otimes\phi\right),
\theta\left(\varphi^\prime\otimes\phi^\prime\right)\right):=Z^\flat\left(\phi, \phi^\prime; \varphi, \varphi^\prime\right)
\]
for $\phi,\phi^\prime\in V_\pi$ and 
$\varphi,\varphi^\prime\in\mathcal S\left(V^n\right)$.
Here we note that for $h\in\widetilde{\symp}_n\left(F\right)$ we have
\begin{equation}\label{e: weil hermitian3}
\mathcal B_\sigma\left(\sigma\left(h\right)\theta\left(\varphi\otimes\phi\right),
\theta\left(\varphi^\prime\otimes\phi^\prime\right)\right)=
\mathcal B_\sigma\left(\theta\left(\omega_\psi\left(h\right)\varphi\otimes\phi\right),
\theta\left(\varphi^\prime\otimes\phi^\prime\right)\right).
\end{equation}
%
\begin{Definition}\label{d: sesquilinear 1}
We define a sesquilinear form $\mathcal W=\mathcal W_{\varphi,\varphi^\prime}:V_\sigma\times V_\sigma\to\mathbb C$ by
\begin{equation}\label{e: weil hermitian4}
\mathcal W\left(\tilde{\phi}_1,\tilde{\phi}_2\right):=
\int_{N}^{\mathrm{st}}
\mathcal B_\sigma\left(\sigma\left(n\right)\tilde{\phi}_1,
\tilde{\phi}_2\right)\,\psi_\lambda\left(n\right)^{-1}\,dn
\end{equation}
for $\tilde{\phi}_1,\tilde{\phi}_2\in V_\sigma$.
\end{Definition}
%
Recall that, by Remark~\ref{r: stability}, the integrand of
\eqref{e: weil hermitian4} has a stable integral over $N$ when $F$ is non-archimedean
and it is integrable when $F$ is archimedean. 

We note that for $n_1,n_2\in N$ and $\tilde{\phi}_1,\tilde{\phi}_2\in V_\sigma$, we have
\begin{equation}\label{e: weil hermitian5}
\mathcal W\left(\sigma\left(n_1\right)\tilde{\phi}_1,
\sigma\left(n_2\right)\tilde{\phi}_2\right)=
\psi_\lambda\left(n_1\right)\psi_\lambda\left(n_2\right)^{-1}
\cdot
\mathcal W\left(\tilde{\phi}_1,\tilde{\phi}_2\right).
\end{equation}
%
\subsubsection{Sesquilinear form $\mathcal W^\circ$}
For $\phi,\phi^\prime\in V_\pi$ and 
$\varphi \in C_c^\infty \left(V^n \right)$,
let
\begin{equation}\label{e: weil hermitian6}
\mathcal V\left(\phi,\phi^\prime;\varphi\right):=
\int_{R_\lambda^\prime\backslash G}
\left(\omega_\psi\left(g,1\right)\varphi\right)\left(x_0\right)
\cdot \alpha\left(\pi\left(g\right)\phi,\phi^\prime\right)\,
dg.
\end{equation}
Recall that 
\[
\alpha\left(\phi,\phi^\prime\right)
=
\int_{D_{\lambda}}\int_{S}^{\mathrm{st}}
\langle\pi\left(st\right)\phi,\phi^\prime\rangle\,
\chi_\lambda\left(s\right)^{-1}\, ds\,dt.
\]
For $\varphi \in C_c^\infty \left(V^n \right)$, the support of $R_\lambda^\prime \backslash G \ni g \mapsto \varphi(g^{-1} \cdot x_0)$ is compact
since $\mathcal{X}_\lambda$ is locally closed
 in $V^n$.
 Hence  the  integral \eqref{e: weil hermitian6} indeed converges absolutely.
 
 We note that when $\alpha\left(\phi,\phi\right)\ne 0$, we have
 \begin{equation}\label{e: V and L}
 \mathcal V\left(\phi,\phi;\varphi\right)=
 \alpha\left(\phi,\phi\right)\cdot \mathcal L\left(\varphi;\phi\right).
 \end{equation}
 Recall that $\mathcal L\left(\varphi;\phi\right)$ is defined by
 \eqref{e: lambda}.
 We also note that
\begin{equation}\label{e: weil hermitian7}
\mathcal V\left(\pi\left(g\right)\phi,\phi^\prime;\omega_\psi\left(g,1\right)\varphi\right)=
\mathcal V\left(\phi,\phi^\prime;\varphi\right)
\quad\text{for $g\in G$.}
\end{equation}
By \eqref{e: weil hermitian7}  there exists uniquely 
a linear form $\ell_{\phi^\prime,\varphi}:V_\sigma\to\mathbb C$
such that
\begin{equation}\label{e: weil hermitian8}
\ell_{\phi^\prime,\varphi}\left(\theta\left(\varphi\otimes\phi\right)\right)=
\mathcal V\left(\phi,\phi^\prime;\varphi\right)
\quad\text{for $\phi\in V_\pi$ and 
$\varphi \in C_c^\infty \left(V^n \right)$.}
\end{equation}
Then,  for $n\in N$ and $\tilde{\phi}\in V_\sigma$, we have
\begin{equation}\label{e: weil hermitian9}
\ell_{\phi^\prime,\varphi}\left(\sigma\left(n\right)\tilde{\phi}\right)=
\psi_\lambda\left(n\right)
\ell_{\phi^\prime,\varphi}\left(\tilde{\phi}\right).
\end{equation}
%
%
\begin{Definition}\label{d: another W}
For $\phi,\phi^\prime\in V_\pi$ and $\varphi,\varphi^\prime\in C_c^\infty \left(V^n \right)$,
we define a sesquilinear form $\mathcal W^\circ=\mathcal W^\circ_{\phi,\phi^\prime,\varphi,\varphi^\prime}:V_\sigma\times V_\sigma\to\mathbb C$
by
\begin{equation}\label{e: another W}
\mathcal W^\circ\left(\tilde{\phi}_1,\tilde{\phi}_2\right):=
\ell_{\phi^\prime,\varphi}\left(\tilde{\phi}_1\right)\cdot
\overline{\ell_{\phi, \varphi^\prime}\left(\tilde{\phi}_2\right)}
\end{equation}
for $\tilde{\phi}_1,\tilde{\phi}_2\in V_\sigma$.
\end{Definition}
%
It is clear from \eqref{e: weil hermitian9} that
for $n_1,n_2\in N$ and $\tilde{\phi}_1,\tilde{\phi}_2\in V_\sigma$,
\begin{equation}\label{e: property of another W}
\mathcal W^\circ\left(\sigma\left(n_1\right)\tilde{\phi}_1,
\sigma\left(n_2\right)\tilde{\phi}_2\right)=
\psi_\lambda\left(n_1\right)\psi_\lambda\left(n_2\right)^{-1}
\cdot
\mathcal W^\circ\left(\tilde{\phi}_1,\tilde{\phi}_2\right).
\end{equation}
%
\subsubsection{Comparison between $\mathcal W$ and $\mathcal W^\circ$}
%
First we note the following lemma whose proof is clear since
$\mathcal X_\lambda$
is locally closed in $V^n$.
\begin{lemma}\label{l: non-vanishing}
Suppose that $\alpha\left(\phi,\phi^\prime\right)\ne 0$.
Then for any open neighborhood $O_{x_0}$ of $x_0$
in $V^n$, there exists $\varphi\in C_c^\infty \left(V^n \right)$ such that 
$\mathrm{Supp}\left(\varphi\right)$, the support of $\varphi$,
is contained in 
$O_{x_0}$ and
$\mathcal V\left(\phi,\phi^\prime;\varphi
\right)\ne 0$.
In particular the linear form $\ell_{\phi^\prime,\varphi}$ on $V_\sigma$
defined by \eqref{e: weil hermitian8} is non-zero for such $\varphi$.
\end{lemma}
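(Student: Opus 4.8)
The plan is to rewrite $\mathcal V\left(\phi,\phi^\prime;\varphi\right)$ as an integral over the orbit $\mathcal X_\lambda=x_0\cdot G\subset V^n$ and then to exhibit $\varphi$ concentrated in an arbitrarily small neighbourhood of $x_0$. First I would observe that $\chi_\lambda$ is trivial on $R_\lambda^\prime=D_\lambda S_\lambda^\prime$: it is trivial on $D_\lambda$ by definition, and on $S_\lambda^\prime$ because, by \eqref{e: stabilizer1}, the matrix block $A$ of an element of $S_\lambda^\prime$ satisfies $Ae_\lambda=0$. Using this together with the equivariance $\alpha\left(\pi\left(r\right)\phi,\phi^\prime\right)=\chi_\lambda\left(r\right)\alpha\left(\phi,\phi^\prime\right)$ for $r\in R_\lambda\supset R_\lambda^\prime$, and with $\left(\omega_\psi\left(r\right)\varphi\right)\left(x_0\right)=\varphi\left(x_0\cdot r\right)=\varphi\left(x_0\right)$ for $r\in R_\lambda^\prime$ (the orthogonal group acting on $\mathcal S\left(V^n\right)$ through the geometric action on $V^n$), one sees that $h\mapsto\left(\omega_\psi\left(h\right)\varphi\right)\left(x_0\right)\,\alpha\left(\pi\left(h\right)\phi,\phi^\prime\right)$ is left $R_\lambda^\prime$-invariant on $G$; in particular
\[
f\left(x_0\cdot h\right):=\alpha\left(\pi\left(h\right)\phi,\phi^\prime\right),\qquad h\in G,
\]
is a well-defined function on $\mathcal X_\lambda$, continuous there — locally constant when $F$ is non-archimedean, smooth when $F$ is archimedean — since $h\mapsto\alpha\left(\pi\left(h\right)\phi,\phi^\prime\right)$ is so on $G$, and $f\left(x_0\right)=\alpha\left(\phi,\phi^\prime\right)\neq0$ by hypothesis. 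Because $\mathcal X_\lambda$ is locally closed in $V^n$ and $R_\lambda^\prime$ is exactly the stabiliser of $x_0$, the orbit map $R_\lambda^\prime\backslash G\to\mathcal X_\lambda$, $R_\lambda^\prime h\mapsto x_0\cdot h$, is a homeomorphism; transporting $dh$ through it yields a measure $d\mu$ on $\mathcal X_\lambda$ that is strictly positive on every non-empty open subset, and for $\varphi$ of compact support one gets $\mathcal V\left(\phi,\phi^\prime;\varphi\right)=\int_{\mathcal X_\lambda}\varphi\left(y\right)f\left(y\right)\,d\mu\left(y\right)$, the integral running over the compact set $\mathrm{Supp}\left(\varphi\right)\cap\mathcal X_\lambda$.

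Granting this, I would finish as follows. If $F$ is non-archimedean, pick a compact open neighbourhood $O$ of $x_0$ with $O\subset O_{x_0}$, small enough that $f\equiv f\left(x_0\right)$ on $O\cap\mathcal X_\lambda$, and take $\varphi=\mathbf{1}_O$; then $\mathcal V\left(\phi,\phi^\prime;\varphi\right)=f\left(x_0\right)\,\mu\left(O\cap\mathcal X_\lambda\right)\neq0$ because $O\cap\mathcal X_\lambda$ is a non-empty open subset of $\mathcal X_\lambda$. If $F$ is archimedean, pick an open ball $B\ni x_0$ with $\overline{B}\subset O_{x_0}$, small enough that $\mathrm{Re}\left(\overline{f\left(x_0\right)}\,f\left(y\right)\right)>0$ for all $y\in\overline{B}\cap\mathcal X_\lambda$, and take $\varphi\in C_c^\infty\left(V^n\right)$ with $\varphi\ge0$, $\mathrm{Supp}\left(\varphi\right)\subset B$ and $\varphi>0$ on a neighbourhood of $x_0$; then $\mathrm{Re}\left(\overline{f\left(x_0\right)}\,\mathcal V\left(\phi,\phi^\prime;\varphi\right)\right)=\int_{\mathcal X_\lambda}\varphi\left(y\right)\,\mathrm{Re}\left(\overline{f\left(x_0\right)}\,f\left(y\right)\right)d\mu\left(y\right)>0$, hence $\mathcal V\left(\phi,\phi^\prime;\varphi\right)\neq0$. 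In either case $\varphi\in\mathcal S\left(V^n\right)$ has compact support contained in $O_{x_0}$, as required; and then $\ell_{\phi^\prime}\left(\theta\left(\varphi\otimes\phi\right)\right)=\mathcal V\left(\phi,\phi^\prime;\varphi\right)\neq0$ by \eqref{e: weil hermitian8}, so $\ell_{\phi^\prime}$ is a non-zero linear form on $V_\sigma$.

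The main—and essentially only—point that needs care is the first paragraph: that the integrand of \eqref{e: weil hermitian6} descends to $\mathcal X_\lambda$ (for which the triviality of $\chi_\lambda$ on $R_\lambda^\prime$ is used) and that the measure it is integrated against is positive on non-empty open subsets of $\mathcal X_\lambda$ (for which the local closedness of $\mathcal X_\lambda$ in $V^n$, making the orbit map a homeomorphism, is used). Once the problem is reduced to finding $\varphi$ with $\int_{\mathcal X_\lambda}\varphi f\,d\mu\neq0$ for a continuous $f$ with $f\left(x_0\right)\neq0$, the construction above is routine, which is why the lemma holds at once.
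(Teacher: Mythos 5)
Your proposal is correct and follows exactly the line the paper intends: the paper dismisses the lemma as ``clearly'' following from the local closedness of $\mathcal X_\lambda$ in $V^n$, and you are simply unpacking that remark --- identifying $\mathcal V$ as an integral of $\varphi$ against a continuous function $f$ with $f(x_0)=\alpha(\phi,\phi')\ne0$ over the orbit (via the homeomorphism $R_\lambda'\backslash G\to\mathcal X_\lambda$, which holds precisely because the orbit is locally closed), and then choosing $\varphi$ concentrated near $x_0$. The details you supply (triviality of $\chi_\lambda$ on $R_\lambda'$, positivity of the transported quotient measure on nonempty open subsets, and the case split non-archimedean/archimedean) are all sound.
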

%
By the uniqueness of the $\psi_\lambda$-Whittaker model \eqref{e: metaplectic mult one},
the equalities \eqref{e: weil hermitian5} and \eqref{e: property of another W} 
imply that $\mathcal W$ is a scalar multiple of $\mathcal W^\circ$ when 
$\mathcal W^\circ$ is non-zero.
The following proposition states that the constant of proportionality is given explicitly.
%
\begin{proposition}\label{p: local equality}
Suppose that 
$\phi,\phi^\prime\in V_\pi$ satisfy
$\alpha\left(\phi,\phi^\prime\right)\ne 0$.

Then for any $\varphi,\varphi^\prime\in C_c^\infty\left(V^n\right)$  
satisfying $\ell_{\phi^\prime,\varphi}\left(\phi^\prime\right)\ne 0$
and $\ell_{\phi,\varphi^\prime}\left(\phi\right)\ne 0$, we have
\begin{equation}\label{e: proportionality}
\mathcal W_{\varphi,\varphi^\prime}=
\frac{C_{E\slash F}}{\alpha\left(\phi,\phi^\prime\right)}\cdot
\mathcal W^\circ_{\phi,\phi^\prime,\varphi,\varphi^\prime}
\end{equation}
where 
\begin{equation}\label{e: constant of prop}
C_{E\slash F}=\frac{L\left(1,\chi_{E\slash F}\right)}{\prod_{j=1}^n\,\zeta_F\left(2j\right)}.
\end{equation}
\end{proposition}
%
Let us  show that Proposition~\ref{prp: local equality} follows from
Proposition~\ref{p: local equality}
before proceeding to a proof of Proposition~\ref{p: local equality}.
%
%
%
\begin{proof}[Proof of Proposition~\ref{prp: local equality}]
Suppose that $\alpha(\phi, \phi) \ne 0$.
By Lemma~\ref{l: non-vanishing}, we may
 take $\varphi \in C_c^\infty(V^n)$ so that
$\mathcal V\left(\phi,\phi;\varphi\right)=\alpha\left(\phi,\phi\right)
\cdot\mathcal L\left(\varphi;\phi\right) \ne 0$. 
Then $\mathcal W^\circ=\mathcal W^\circ_{\phi,\phi,\varphi,\varphi}$ is non-zero.
Hence by \eqref{e: proportionality}, we have
\[
\mathcal W\left(\theta\left(\varphi\otimes\phi\right),\theta\left(\varphi\otimes\phi\right)\right)
=\frac{C_{E\slash F}}{\alpha\left(\phi,\phi\right)}\cdot
\mathcal W^\circ\left(\theta\left(\varphi\otimes\phi\right),\theta\left(\varphi\otimes\phi\right)\right),
\]
i.e.
\begin{equation}\label{e: weil hermitian11}
\mathcal W\left(\theta\left(\varphi\otimes\phi\right),
\theta\left(\varphi\otimes\phi\right)\right)
=C_{E \slash F}\cdot \alpha\left(\phi,\phi\right)\cdot
\left| \mathcal L\left(\varphi;\phi\right)\right|^2
\end{equation}
by \eqref{e: V and L} and \eqref{e: weil hermitian8}.

On the other hand, by using 
the $\widetilde{\mathrm{Sp}}_n\left(F\right)$-invariant Hermitian inner product
$\mathcal{B}_\sigma(\,, \,)$ in the definition \eqref{e: LM local integral}
for $I\left(\theta(\varphi\otimes \phi)\right)$, we have
\begin{multline}\label{rem eq 1}
\mathcal B_\sigma\left(\theta\left(\varphi\otimes\phi\right), 
\theta\left(\varphi\otimes\phi\right)
\right) \cdot I\left(\theta(\varphi\otimes \phi)\right)
\\
=
\frac{L\left(1,\pi, \mathrm{Ad}\right)}{
L\left(1\slash 2,\pi \times\chi_{E}\right)
\prod_{j=1}^n\zeta_{F}\left(2j\right)}
 \cdot 
\mathcal W\left(\theta\left(\varphi\otimes\phi\right),
\theta\left(\varphi\otimes\phi\right)\right)
\end{multline}
by \eqref{e: weil hermitian4}.
Here by Gan and Ichino~\cite[16.3]{GI}, we have
\begin{equation}\label{e: one of Gan-Ichino}
\mathcal B_\sigma\left(\theta\left(\varphi\otimes\phi\right), 
\theta\left(\varphi\otimes\phi\right)\right)
=
Z\left(0, \phi,\phi,\Phi_{\tau\left(\varphi\otimes\varphi\right)}, \pi \right)
\end{equation}
where the right hand side is the local doubling integral
defined by \eqref{e: local doubling}.
Hence by rewriting \eqref{e: one of Gan-Ichino} in terms of 
$Z^\circ\left(\phi, \varphi, \pi\right)$ defined by \eqref{e: normalized doubling}, we have
\begin{equation}
\label{rem eq 2}
 \frac{\prod_{j=1}^n\zeta_{F}\left(2j\right)}{L\left(1\slash 2, \pi\right)} 
\cdot 
\mathcal B_\sigma\left(\theta\left(\varphi\otimes\phi\right), 
\theta\left(\varphi\otimes \phi\right)\right)
=
\langle \phi, \phi \rangle \cdot Z^\circ\left(\phi, \varphi, \pi\right).
\end{equation}
Thus by combining  \eqref{rem eq 1} and \eqref{rem eq 2}, we have
\begin{multline}\label{e: W-I}
\frac{L\left(1,\pi, \mathrm{Ad}\right)}{
L\left(1\slash 2,\pi\times\chi_{E}\right)
L\left(1\slash 2, \pi\right)}
 \cdot 
\mathcal W\left(\theta\left(\varphi\otimes\phi\right),
\theta\left(\varphi\otimes\phi\right)\right) \\
=\langle \phi, \phi \rangle \cdot Z^\circ\left(\phi, \varphi, \pi\right) \cdot I\left(\theta\left(\varphi\otimes\phi\right)\right).
\end{multline}

Substituting \eqref{e: weil hermitian11} into \eqref{e: W-I} yields
\eqref{e: local equality} and thus Proposition~\ref{prp: local equality} holds.
\end{proof}
%

\subsection{Reduction to another local equality}
%
%
Here we shall observe that Proposition~\ref{p: local equality} follows
from a local equality \eqref{e: weil hermitian13} below.
%

Since 
\[
\mathcal W^\circ\left(\theta\left(\varphi\otimes\phi\right),
\theta\left(\varphi^\prime\otimes\phi^\prime\right)\right)=
\mathcal V\left(\phi,\phi^\prime;\varphi\right)\cdot
\overline{\mathcal V\left(\phi^\prime,\phi;\varphi^\prime\right)}\ne 0
\]
by \eqref{e: weil hermitian8} and \eqref{e: another W},
the equality \eqref{e: proportionality} follows from
\begin{equation}\label{e: weil hermitian10-1}
\mathcal W\left(
\theta\left(\varphi\otimes\phi
\right),
\theta\left(\varphi^\prime\otimes\phi^\prime\right)
\right)
=
\frac{C_{E \slash F} }{\alpha\left(\phi,\phi^\prime\right)}\cdot
\mathcal V\left(\phi,\phi^\prime;\varphi\right)
\cdot
\overline{\mathcal V\left(\phi^\prime,\phi;\varphi^\prime\right)}.
\end{equation}
Here
\begin{multline}\label{e: def of V-V}
\mathcal V\left(\phi,\phi^\prime;\varphi\right)
\cdot
\overline{\mathcal V\left(\phi^\prime,\phi;\varphi^\prime\right)}
=
\int_{R_\lambda^\prime\backslash G}
\int_{R_\lambda^\prime\backslash G}
\alpha\left(\pi\left(h\right)\phi,\phi^\prime\right)\,
\overline{\alpha\left(\pi\left(h^\prime\right)\phi^\prime,\phi\right)}
\\
\times
\left(\omega_\psi\left(h,1\right)\varphi\right)\left(x_0\right)\,
\overline{\left(\omega_\psi\left(h^\prime,1\right)\varphi^\prime\right)\left(x_0\right)}\,
dh\, dh^\prime.
\end{multline}
We observe that a sesquilinear form $\mathcal A$ on $V_\pi$ defined by
\[
\mathcal A\left(\phi_1,\phi_1^\prime\right):=
\alpha\left(\phi_1,\phi^\prime\right)\overline{\alpha\left(\phi_1^\prime,\phi\right)}
\]
satisfies 
\[
\mathcal A\left(\pi\left(r\right)\phi_1,\pi\left(r^\prime\right)\phi_1^\prime\right)=
\chi_\lambda\left(r\right)\,\chi_\lambda\left(r^\prime\right)^{-1}\cdot
\mathcal A\left(\phi_1,\phi_1^\prime\right)
\]
for $r,r^\prime\in R_\lambda$.
Hence  the uniqueness of the special Bessel model
\eqref{uniqueness} implies that
there exists a constant $c^\prime$ such that $\mathcal A=c^\prime\cdot\alpha$.
Since $\alpha\left(\phi,\phi^\prime\right)\ne 0$, we have
\[
c^\prime=\mathcal A\left(\phi,\phi^\prime\right)\slash 
\alpha\left(\phi,\phi^\prime\right)=\alpha\left(\phi,\phi^\prime\right).
\]
Hence in the integrand of \eqref{e: def of V-V}, we have
\[
\alpha\left(\pi\left(h\right)\phi,\phi^\prime\right)\,
\overline{\alpha\left(\pi\left(h^\prime\right)\phi^\prime,\phi\right)}=
\mathcal A\left(\pi\left(h\right)\phi,\pi\left(h^\prime\right)\phi^\prime\right)
=\alpha\left(\phi,\phi^\prime\right)
\alpha\left(\pi\left(h\right)\phi,\pi\left(h^\prime\right)\phi^\prime\right).
\]

Thus the equality \eqref{e: weil hermitian10-1} follows from the following 
proposition.
%
%
%
%
%
\begin{proposition}\label{l: main lemma}
For any $\phi,\phi^\prime\in V_\pi$ and any $\varphi,\varphi^\prime\in C_c^\infty\left(V^n\right)$,
we have
\begin{multline}\label{e: weil hermitian13}
\mathcal W\left(
\theta\left(\varphi\otimes\phi
\right),
\theta\left(\varphi^\prime\otimes\phi^\prime\right)
\right)
=
C_{E\slash F} 
\\
\times
\int_{R_\lambda^\prime\backslash G}
\int_{R_\lambda^\prime\backslash G}
\alpha\left(\pi\left(h\right)\phi,\pi\left(h^\prime\right)\phi^\prime\right)\,
\left(\omega_\psi\left(h,1\right)\varphi\right)\left(x_0\right)\,
\overline{\left(\omega_\psi\left(h^\prime,1\right)\varphi^\prime\right)\left(x_0\right)}\,
dh\, dh^\prime.
\end{multline}
\end{proposition}
%
%
%
%
\begin{Remark}
Note that  we shall show \eqref{e: weil hermitian13} in more generality 
than just necessary to prove \eqref{e: weil hermitian10-1}
because of its later use in the proof of Corollary~\ref{c: genericity}.
In particular,
we do not assume  $\alpha\left(\phi,\phi^\prime\right)\ne 0$
in Proposition~\ref{l: main lemma}.
\end{Remark}
%
\begin{Remark}\label{r: local pull-back}
The equality \eqref{e: weil hermitian13} may be naturally regarded as a
local pull-back formula for the $\psi_\lambda$-Whittaker pairing.
\end{Remark}
%
%
%
%
\subsection{Proof of Proposition~\ref{l: main lemma}}
\label{ss: proposition 4}
%
%
%
By the definition in \ref{sss: W}, we have
\begin{multline}\label{e: weil hermitian14}
\mathcal W\left(
\theta\left(\varphi\otimes\phi
\right),
\theta\left(\varphi^\prime\otimes\phi^\prime\right)
\right)
=\int_{U_n}^{\mathrm{st}}
\int_{U_{\mathrm{Sp}}}^{\mathrm{st}}\int_G\int_{V^n}
\\
\left(\omega_\psi\left(1,m\left(u\right)v\right)\varphi\right)\left(g^{-1} \cdot x\right)
\, \overline{\varphi^\prime\left(x\right)}
\, \langle\pi\left(g\right)\phi,\phi^\prime\rangle
\,\psi_{\lambda}\left(m\left(u\right)v\right)^{-1}\, dx\, dg\, dv\, du.
\end{multline}
Here we use the decomposition \eqref{e: unipotent decomposition} of  $N$.
We shall show \eqref{e: weil hermitian13} by modifying the right hand side of 
\eqref{e: weil hermitian14} in steps.
%
\subsubsection{Inner triple integral}
We shall take care of the inner triple integral of  \eqref{e: weil hermitian14} by adapting 
Liu's computations in \cite[3.5]{Liu} to our setting.

Let $V^n_\circ$ be a subset of $V^n$ consisting of 
$\left(v_1,\cdots , v_n\right)\in V^n$ such that
$v_1,\cdots , v_n$ are linearly independent
and the inner product $\left(v_n,v_n\right)\ne 0$.
Then $V^n_\circ$ is open in $V^n$ and
$\mathrm{Vol}\left(V^n\setminus V^n_\circ,dx\right)=0$.
%

Let $\mathrm{Sym}^n$ denote the set of $n\times n$ symmetric matrices with
entries in $F$ and
\[
\mathrm{Sym}^n_\circ:=\left\{
S=\left(s_{i,j}\right)\in \mathrm{Sym}^n\mid
s_{n,n}\ne 0\right\}.
\]
We consider a mapping $\mathrm{Gr}:V_\circ^n\to \mathrm{Sym}^n_\circ$
given by the Gram matrix $\mathrm{Gr}\left(x\right)$
for $x\in V^n_\circ$.
It is clear that $\mathrm{Gr}$ is surjective.
For each $S\in\mathrm{Sym}^n_\circ$, 
we fix  $x_S\in V^n_\circ$ such that $\mathrm{Gr}\left(x_S\right)=S$.
Then by Witt's theorem, the fiber $\mathrm{Gr}^{-1}\left(S\right)$
of $S$ is given by
\[
\mathrm{Gr}^{-1}\left(S\right)=\left\{g^{-1} \cdot x_S \mid g\in G\right\}.
\]
Let $R_S^\prime$ denotes the stabilizer of 
$x_S$
 in $G$.
Then we may identify $\mathrm{Gr}^{-1}\left(S\right)$
with $R_S^\prime\backslash G$ as $G$-homogeneous spaces.
We have the following integration formula.
%
\begin{lemma}\label{l: measure}
For each $S\in\mathrm{Sym}^n_\circ$, there exists a
Haar measure $dr_S^\prime$ on $R_S^\prime$ such that
\begin{equation}\label{e: integration formula}
\int_{V^n}\Phi\left(x\right)\, dx=
\int_{\mathrm{Sym}^n_\circ}\int_{R_S^\prime\backslash G}
\Phi\left(h^{-1} \cdot x_S\right)\, dh_S\, dS
\end{equation}
for any $\Phi\in L^1\left(V^n\right)$.
Here $dh_S$ denotes the quotient measure
$dr^\prime_S\backslash dg$ on $R_S^\prime\backslash G$.
\end{lemma}
\begin{proof}
Since $\mathrm{Vol}\left(V^n\setminus V^n_\circ,dx\right)=0$,
we have
\[
\int_{V^n}\Phi\left(x\right)\, dx=
\int_{V_\circ^n}\Phi\left(x\right)\, dx
\]
for $\Phi\in L^1\left(V^n\right)$.
Then the lemma readily follows from  the observation above.
\end{proof}
%
\begin{Remark}
Let  $S_\circ:= \mathrm{Gr}\left(x_0\right)
\in\mathrm{Sym}^n_\circ$ where $x_0$ is given by \eqref{e: x_0}.
We take $x_S$ to be $x_0$
when $S=S_0$.
Then
 $R_{S_\circ}^\prime= R_\lambda^\prime$ 
 defined by  \eqref{e: stabilizer}, which has a decomposition
 $R_\lambda^\prime = D_\lambda S^\prime_\lambda$
as \eqref{e: stabilizer1}.
Recall that we take the local Tamagawa measure  
as explained in \ref{ss: Haar measure constants}. Recall also that the measure on the 
unipotent group $S_\lambda^\prime$ is taken as explained in  \ref{ss: notation}.
On the other hand, the quotient measure $dh_{S_\circ}$ on $R_\lambda^\prime \backslash G$ used in \eqref{e: integration formula} is the 
quotient measure of 
the local measures corresponding to the gauge forms $\omega_\lambda$ and $\omega_G$,
which are not normalized as local Tamagawa measures by 
the
 local $L$-factors. 
Hence the relationship between
the two quotient measures on $R_\lambda^\prime\backslash G$,
$dh_{S_\circ}$  in \eqref{e: integration formula} 
and $dh$ defined by \eqref{e: quotient measure},
is given by
\begin{equation}\label{e: two measures}
dh_{S_\circ} =  C_{E \slash F} \cdot  dh
\end{equation}
where $C_{E\slash F}$ is as \eqref{e: proportionality}.
\end{Remark}
%

Before proceeding further, we note the following lemma,
which is proved by an argument
similar to the one for \cite[Lemma~3.20]{Liu}
when $F$ is non-archimedean
and to the one for \cite[Proposition~3.22]{Liu} when $F$ is archimedean, respectively.
%
\begin{lemma}\label{l: integral formula1}
For $\varphi_1,\varphi_2\in C_c^\infty\left(V^n\right)$
and $\phi_1,\phi_2\in V_\pi$, let 
\[
\mathcal G_{\varphi_1,\varphi_2,\phi_1,\phi_2}
\left(S\right)=\int_G
\int_{R_S^\prime\backslash G}
\varphi_1\left((h g^\prime)^{-1} \cdot x_S\right)\,
\varphi_2\left(h^{-1} \cdot x_S\right)\,
\langle\pi\left(g^\prime\right)\phi_1,\phi_2\rangle\,
dh\, dg^\prime
\]
for $S\in \mathrm{Sym}_\circ^n$.
\begin{enumerate}
\item
When $F$ is non-archimedean,
the integral is absolutely convergent and is 
locally constant.
\item
When $F$ is archimedean,
the integral is absolutely convergent and 
is a function in $L^1\left(\mathrm{Sym}^n\right)$
which is continuous on $\mathrm{Sym}^n_\circ$.
\end{enumerate}
\end{lemma}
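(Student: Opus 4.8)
The plan is to transcribe the reduction-theoretic estimates of Liu~\cite[Lemma~3.19, Lemma~3.20, Proposition~3.20]{Liu} to the present Bessel situation, and in particular to deduce the $L^1$-bound in the real case directly from the absolute convergence of the integral $Z^\flat$ in \eqref{e: weil hermitian2}. First, working locally in $S$, I would fix a base point $x_S\in V^n_\circ$ depending smoothly (resp.\ locally constantly, in the non-archimedean case) on $S$, a measurable section of $R_S^\prime\backslash G\to G$, and the invariant measure $dh$ on $R_S^\prime\backslash G$ supplied by Lemma~\ref{l: measure}. Recall that $\mathrm{Gr}^{-1}(S)=x_S\cdot G$ is a single $G$-orbit, closed inside the open subset $V^n_\circ$ but in general not closed in $V^n$ when $\det S=0$ (already for $S=S_\circ$). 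Fixing a generalized Cartan or Iwasawa decomposition of $G$ adapted to $R_S^\prime$ gives polynomial coordinates on $R_S^\prime\backslash G$ in which the orbit map $h\mapsto x_S\cdot h$ and the measure $dh$ are both explicit, and in these coordinates I would carry out the estimates below.

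\emph{Absolute convergence.} For fixed $g^\prime\in G$, the compact supports of $\varphi_1,\varphi_2$ force $x_S\cdot h\in\mathrm{Supp}(\varphi_2)$ and $x_S\cdot hg^\prime\in\mathrm{Supp}(\varphi_1)$; in the coordinates above one bounds $\abs{\varphi_1(x_S\cdot hg^\prime)\varphi_2(x_S\cdot h)}$ by an explicit function of $h$ whose decay towards the non-closed part of the boundary of $x_S\cdot G$ makes the inner integral $F(g^\prime):=\int_{R_S^\prime\backslash G}\varphi_1(x_S\cdot hg^\prime)\varphi_2(x_S\cdot h)\,dh$ absolutely convergent, with $\abs{F(g^\prime)}\le c(S)\,\Xi_0(g^\prime)$ for a fixed function $\Xi_0$ on $G$ and $c(S)$ locally bounded on $\mathrm{Sym}^n_\circ$. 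Since $\pi$ is tempered, $\abs{\langle\pi(g^\prime)\phi_1,\phi_2\rangle}$ is bounded by a constant times Harish-Chandra's function $\Xi_G(g^\prime)$; choosing $\Xi_0$ so that $\int_G\Xi_G(g^\prime)\Xi_0(g^\prime)\,dg^\prime<\infty$ (which the estimates allow, exactly as in the proof of \cite[Lemma~3.19]{Liu}) yields the absolute convergence of $\mathcal G_{\varphi_1,\varphi_2,\phi_1,\phi_2}(S)$ for every $S\in\mathrm{Sym}^n_\circ$.

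\emph{Regularity and the $L^1$-bound.} Because the majorant of the previous step is uniform for $S$ in a compact subset (resp.\ a small neighbourhood) of $\mathrm{Sym}^n_\circ$, continuity of $S\mapsto\mathcal G(S)$ on $\mathrm{Sym}^n_\circ$ in the real case follows from dominated convergence, whereas in the non-archimedean case $\varphi_1,\varphi_2$ are invariant under a compact open subgroup, so the coordinatised integral depends on $S$ only through finitely many of its coordinates and is therefore locally constant — this is the point where I would follow \cite[Lemma~3.20]{Liu} verbatim. For the $L^1$-bound in the real case I would integrate $\abs{\mathcal G(S)}$ over $\mathrm{Sym}^n$, interchange the $S$- and $g^\prime$-integrations by Tonelli, and use Lemma~\ref{l: measure} to collapse $\int_{\mathrm{Sym}^n}\int_{R_S^\prime\backslash G}\abs{\varphi_1(x_S\cdot hg^\prime)\varphi_2(x_S\cdot h)}\,dh\,dS$ into $\int_{V^n}\abs{\varphi_1(x\cdot g^\prime)}\,\abs{\varphi_2(x)}\,dx$; the resulting bound $\int_{\mathrm{Sym}^n}\abs{\mathcal G(S)}\,dS\le\int_G\abs{\langle\pi(g^\prime)\phi_1,\phi_2\rangle}\bigl(\int_{V^n}\abs{\varphi_1(x\cdot g^\prime)}\,\abs{\varphi_2(x)}\,dx\bigr)\,dg^\prime$ is finite by the very argument that establishes the absolute convergence of $Z^\flat$ in \eqref{e: weil hermitian2}, since the inner $V^n$-integral is, with all integrands replaced by their absolute values, precisely of the form $\mathcal B_\omega(\omega_\psi(g^\prime)\varphi_1,\varphi_2)$.

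\emph{Main obstacle.} The crux of the argument is the convergence estimate near the non-closed part of the orbit $x_S\cdot G$ for degenerate $S$ — equivalently, the failure of $h\mapsto\varphi_2(x_S\cdot h)$ to have compact support on $R_S^\prime\backslash G$ — together with the requirement that the resulting majorant be uniform in $S$ on compacta; once this is secured, both the regularity assertions and the $L^1$-membership follow formally.
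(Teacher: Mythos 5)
Your proposal follows the same route as the paper, which simply invokes \cite[Lemma~3.20]{Liu} for the non-archimedean case and \cite[Proposition~3.20]{Liu} for the real case without reproducing any estimates; you have correctly identified the ingredients (temperedness and $\Xi$-function bounds for pointwise convergence, uniformity and compact supports for local constancy/continuity, and the Tonelli/Lemma~\ref{l: measure} collapse to the absolute convergence of $Z^\flat$ for $L^1$-membership). The delicate estimates near the non-closed boundary of the orbit $x_S\cdot G$ that you flag as the main obstacle are precisely the substance of Liu's cited results, which the paper likewise leaves implicit.
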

%

Now for $\varphi,\varphi^\prime\in C_c^\infty\left(V^n\right)$
and $\phi,\phi^\prime\in V_\pi$, let
\begin{equation}\label{e: f}
f_{\varphi, \varphi^\prime,\phi,\phi^\prime}(n)
:=
\int_G\int_{V^n}
\\
\left(\omega_\psi\left(1,n\right)\varphi\right)\left(g^{-1} \cdot x \right)
\, \overline{\varphi^\prime\left(x\right)}
\, \langle\pi\left(g\right)\phi,\phi^\prime\rangle \, dx\, dg
\end{equation}
for $n\in N$.
Then
\begin{equation}\label{e: first step}
\mathcal W\left(
\theta\left(\varphi\otimes\phi
\right),
\theta\left(\varphi^\prime\otimes\phi^\prime\right)
\right)
=\int_{U_n}^{\mathrm{st}}
\int_{U_{\mathrm{Sp}}}^{\mathrm{st}}
f_{\varphi,\varphi^\prime}\left(m\left(u\right)v\right)
\psi_\lambda\left(m\left(u\right)v\right)^{-1}\, dv\, du.
\end{equation}

Since 
\[
U_{\mathrm{Sp}}=\left\{v\left(S\right)=\begin{pmatrix}1_n&S\\0&1_n\end{pmatrix}
: S\in\mathrm{Sym}^n\right\},
\]
we may regard 
$\displaystyle{\int_{U_{\mathrm{Sp}}}^{\mathrm{st}}}$ as 
$\displaystyle{\int_{\mathrm{Sym}^n}^{\mathrm{st}}}$.
Then by rewriting the integration over $V^n$ in \eqref{e: f}
using the integration formula \eqref{e: integration formula},
we have the following lemma.
%
\begin{lemma}\label{l: integral formula2}
We have
\begin{multline}
\label{lem LS}
\int_{U_{\mathrm{Sp}}}^{\mathrm{st}} f_{\varphi, \varphi^\prime}(v)
\psi_{\lambda}\left(v\right)^{-1} \, dv
\\
=
C_{E \slash F} \cdot  \int_{G}\int_{R_\lambda^\prime\backslash G}
\left(\omega_\psi\left(hg,
1\right)\varphi\right)\left(x_0\right)
\, \overline{\varphi^\prime\left(h^{-1} \cdot x_0 \right)}
\, \langle\pi\left(g\right)\phi,\phi^\prime\rangle \, dh\, dg.
\end{multline}
\end{lemma}
%
\begin{proof}
The argument using the Fourier inversion
for the proof of \cite[Proposition~3.21]{Liu}
in the non-archimedean case and the one for
\cite[Corollary~3.23]{Liu} in the archimedean case
work mutatis mutandis, 
since Lemma~\ref{l: integral formula1} holds.
Thus we obtain \eqref{lem LS} 
by taking into account \eqref{e: two measures} also.
\end{proof}
%
By Lemma~\ref{l: integral formula2}, we have
\begin{multline}\label{e: weil hermitian15}
\mathcal W\left(
\theta\left(\varphi\otimes\phi
\right),
\theta\left(\varphi^\prime\otimes\phi^\prime\right)
\right)
=C_{E \slash F} \cdot \int_{U_n}^{\mathrm{st}}\int_{G}\int_{R_\lambda^\prime\backslash G}
\\
\left(\omega_\psi\left(hg,
m\left(u\right)\right)\varphi\right)\left(x_0\right)
\, \overline{\varphi^\prime\left(h^{-1} \cdot x_0\right)}
\, \langle\pi\left(g\right)\phi,\phi^\prime\rangle
\,\psi_{\lambda}\left(m\left(u\right)\right)^{-1}\, dh\, dg\,du.
\end{multline}
Then by a change of variable $g\mapsto h^{-1}g$
and  also noting  that $\langle\pi\left(h^{-1}g\right)\phi,\phi^\prime\rangle=
\langle\pi\left(g\right)\phi,\pi\left(h\right)\phi^\prime\rangle$, we may
write \eqref{e: weil hermitian15} as
\begin{multline}\label{e: weil hermitian16}
\mathcal W\left(
\theta\left(\varphi\otimes\phi
\right),
\theta\left(\varphi^\prime\otimes\phi^\prime\right)
\right)
=C_{E \slash F} \cdot\int_{U_n}^{\mathrm{st}}\int_{G}\int_{R_\lambda^\prime\backslash G}
\\
\left(\omega_\psi\left(g,
m\left(u\right)\right)\varphi\right)\left(x_0\right)
\, \overline{\varphi^\prime\left(h^{-1} \cdot x_0\right)}
\, \langle\pi\left(g\right)\phi,\pi\left(h\right)\phi^\prime\rangle
\,\psi_{\lambda}\left(m\left(u\right)\right)^{-1}\, dh\, dg\,du.
\end{multline}
%
Here the inner double integral on the right hand side of
\eqref{e: weil hermitian16}
 converges absolutely by Lemma~\ref{l: integral formula1}.
 Hence we may change the order of integration
 and we have
\begin{multline*}
\mathcal W\left(
\theta\left(\varphi\otimes\phi
\right),
\theta\left(\varphi^\prime\otimes\phi^\prime\right)
\right)
=C_{E \slash F} \cdot\int_{U_n}^{\mathrm{st}}\int_{R_\lambda^\prime\backslash G}\int_{G}
\\
\left(\omega_\psi\left(g,
m\left(u\right)\right)\varphi\right)\left(x_0\right)
\, \overline{\varphi^\prime\left(h^{-1} \cdot x_0\right)}
\, \langle\pi\left(g\right)\phi,\pi\left(h\right)\phi^\prime\rangle
\,\psi_{\lambda}\left(m\left(u\right)\right)^{-1}\, dg\, dh\,du.
\end{multline*}
%
Moreover, since the 
inner-most
integral converges absolutely, we may telescope  
the $G$-integration and we have
\begin{multline}\label{e: weil hermitian17}
\mathcal W\left(
\theta\left(\varphi\otimes\phi
\right),
\theta\left(\varphi^\prime\otimes\phi^\prime\right)
\right)
=C_{E \slash F} \cdot\int_{U_n}^{\mathrm{st}}\int_{R_\lambda^\prime
\backslash G}\int_{R_\lambda^\prime\backslash G}\int_{R_\lambda^\prime}
\\
\left(\omega_\psi\left(g,
m\left(u\right)\right)\varphi\right)\left(x_0\right)
\, \overline{\varphi^\prime\left(h^{-1} \cdot x_0\right)}
\, \langle\pi\left(r^\prime g\right)\phi,\pi\left(h\right)\phi^\prime\rangle
\,\psi_{\lambda}\left(m\left(u\right)\right)^{-1}\, dr^\prime\,
dg\, dh \,du.
\end{multline}
%
\begin{Remark}
\label{rem; abs conv}
As we have seen, because of Lemma~\ref{l: integral formula1},
\begin{equation}\label{e: r_lambda^prime}
\int_{R_\lambda^\prime}
\langle\pi\left(r^\prime g \right)\phi, \pi(h)\phi^\prime\rangle\,
dr^\prime,
\end{equation}
the most inner integral of \eqref{e: weil hermitian17},
converges absolutely.
This $R_\lambda^\prime$-integration  appears as an inner integral of the definition \eqref{e: local integral 1} for
$\alpha \left(\pi(g)\phi, \pi(h)\phi^\prime \right)$
since $R_\lambda^\prime=D_\lambda S_\lambda^\prime\subset R_\lambda$
and $\chi_\lambda\left(r^\prime\right)=1$ for $r^\prime\in R_\lambda^\prime$.
\end{Remark}
%
\subsubsection{Stable integration over $U_n$}
\label{adapt Fu}
Suppose that $F$ is non-archimedean.
We shall transform  the stable integration
over $U_n$ as a subgroup of  
$\widetilde{\mathrm{Sp}}_n\left(F\right)$ in \eqref{e: weil hermitian17}  into an integration
over a subgroup of $R_\lambda$ by
adapting the global argument in 
\cite[p.97--98]{Fu}
to our local setting
and shall reduce Proposition~\ref{l: main lemma} to Lemma~\ref{lem; final step} below.

Recall the $U_n$ is the group of upper unipotent matrices in $\mathrm{GL}_n\left(F\right)$.
Let us identify $U_{n-1}$ with the subgroup $\left\{\begin{pmatrix}u&0\\0&1 \end{pmatrix} : u \in U_{n-1} \right\}$
of $U_n$. Let $U_0$ be the subgroup of $U_n$ defined by 
\[
U_0=\left\{
\begin{pmatrix}1_{n-1}&a\\0&1\end{pmatrix}\in U_n
\right\}.
\]
Thus we have $U_n=U_0\rtimes U_{n-1}$.
We note that for 
\begin{equation}\label{e: u prime}
u^\prime=\begin{pmatrix}1_{n-1}&a\\0&1\end{pmatrix}\in
U_0\quad\text{with $a=\begin{pmatrix}a_1\\ \vdots \\ a_{n-1}\end{pmatrix}$}
\end{equation}
and $u_1\in U_{n-1}$, we have 
\[
\omega_\psi(g, m(u^\prime u_1)) \varphi(x_0)
=\omega_\psi(g, m(u_1)) \varphi \left(e_{-1}, \dots, e_{-n+1}, e_\lambda+\sum_{j=1}^{n-1}a_j e_{-j} \right)
\]
by \eqref{weil action-b}. 
For $u^\prime\in U_0$ of the form \eqref{e: u prime}, let
\[
s\left(u^\prime\right):=s_n\left(a_{n-1}\right)\cdots s_1\left(a_1\right).
\]
We recall that $s_j(a)$ is defined by \eqref{e: s_j}.
Then by \eqref{weil action-a}, we have
\[
\omega_\psi(g, m(u_1)) \varphi \left(e_{-1}, \dots, e_{-n+1}, e_\lambda+\sum_{j=1}^{n-1}a_j e_{-j} \right)
= \omega_\psi(s\left(u^\prime\right)^{-1}g, m(u_1))  \varphi
(x_0).
\]
Further we note that by \eqref{weil action-a} and \eqref{weil action-b},
we have
\[
\omega_\psi(s\left(u^\prime\right)^{-1}g, m(u_1))  \varphi
(x_0)=
\omega_\psi\left(\check{u}_1^{-1}s\left(u^\prime\right)^{-1}g,
1\right)\varphi\left(x_0\right)
\]
where $\check{u}_1$ is defined by  \eqref{e: u check} for $u_1\in U_{n-1}$.
We also note that 
\[
\psi_\lambda\left(m\left(u^\prime u_1\right)\right)=
\chi_\lambda\left(s\left(u^\prime\right) \check{u}_1\right)
\]
by \eqref{e: character identity}.
Hence the integral of the right hand side of \eqref{e: weil hermitian17} is equal to
\begin{multline}
\label{eq0}
\int_{U_n}^{\mathrm{st}} \int_{R_\lambda^\prime
\backslash G}\int_{R_\lambda^\prime\backslash G}\int_{R_\lambda^\prime}
\omega_\psi\left(\check{u}_1^{-1}s\left(u^\prime\right)^{-1}g,
1\right)\varphi\left(x_0\right)
\, \overline{\varphi^\prime\left(h^{-1} \cdot x_0\right)}
\\
\times \langle\pi\left(r^\prime g\right)\phi,\pi\left(h\right)\phi^\prime\rangle
\,\chi_\lambda\left(s\left(u^\prime\right) \check{u}_1\right)^{-1}\, dr^\prime\,
dg\, dh \,du
\end{multline}
where $u=u^\prime u_1$, $u^\prime\in U_0$ and 
$u_1\in U_{n-1}$.
We note the following elementary lemma.
%
\begin{lemma}\label{l: compact}
\begin{enumerate}
\item
For a given compact open subgroup $U_{n-1}^\circ$ of $U_{n-1}$ and  a compact 
open subgroup $U_0^\prime$ of $U_0$, there exists a compact open 
subgroup $U_0^\circ$ of $U_0$ such that
$U_{n-1}^\circ U_0^\circ$ is a subgroup of $U_n$
and $U_0^\circ\supset U_0^\prime$.
\item
For a given compact open subgroup $U^\circ$ of $U_n$,
there exist a compact open subgroup $U_{n-1}^\circ$ of $U_{n-1}$
and a compact open subgroup $U_0^\circ$ of $U_0$
such that $U_{n-1}^\circ U_0^\circ$ is a subgroup of $U_n$ containing 
$U_n^\circ$.
\end{enumerate}
\end{lemma}
%
\begin{proof}
For a positive integer $r$, let $U_0^{(r)}=\left\{\begin{pmatrix}1_{n-1}&a\\0&1\end{pmatrix}
: a\in \varpi^{-r}\begin{pmatrix}\mathcal O\\ \vdots \\ \mathcal O\end{pmatrix}\right\}$.
\begin{enumerate}
\item
Take $r$ sufficiently large so that 
$U_0^{(r)}\supset U_0^\prime$.
Since $U^\circ_{n-1}$ is compact, there exists an integer $s$ such that
all entries of elements of $U_{n-1}^\circ$ are in $\varpi^{-s}\mathcal O$.
Let us take integers $r_1, \cdots , r_{n-1}$ inductively so that
$r_{n-1}=r$ and $r_{n-k}\ge \mathrm{max}\left\{r, sr_{n-1},\cdots , s r_{n-k+1}\right\}$
for $2\le k\le n-1$.
Let 
\begin{equation}\label{e: compact 0}
U_0^\circ=
\left\{\begin{pmatrix}1_{n-1}&a\\0&1\end{pmatrix}:
a\in\begin{pmatrix}\varpi^{-r_1}\mathcal O\\ \vdots \\
\varpi^{-r_{n-1}}\mathcal O\end{pmatrix}\right\}.
\end{equation}
Then $U_{n-1}^\circ U_0^\circ$ is a subgroup of $U_{n}$ and $U_0^\circ\supset
U_0^\prime$.
\item
Let $U_{n-1}^\circ=U^\circ\cap U_{n-1}$.
Then $U_{n-1}^\circ$ is a compact open subgroup of $U_{n-1}$.
Since  $\displaystyle{U^\circ\subset\, \cup_{r\ge 1} U_{n-1}^\circ U_0^{(r)}}$ and 
$U^\circ$ is compact, we have $U^\circ\subset U_{n-1}^\circ U_0^{(r)}$
for $r$ sufficiently large.
By (1), we may take a compact open subgroup of $U_0^\circ$ of $U_0$
so that $U_{n-1}^\circ U_0^\circ$ is a subgroup $U_{n}$
and $U_0^\circ\supset U_0^{(r)}$.
\end{enumerate}
\end{proof}
%
By the definition of the stable integration,
for any sufficiently large compact open subgroup $U^\circ$ of $U_n$, 
the integral \eqref{eq0}  is equal to
\begin{multline}
\label{eq1}
\int_{U_n} \int_{R_\lambda^\prime
\backslash G}\int_{R_\lambda^\prime\backslash G}\int_{R_\lambda^\prime}
\chi_{U^\circ}\left(u^\prime u_1\right)\cdot
\omega_\psi\left(\check{u}_1^{-1}s\left(u^\prime\right)^{-1}g,
1\right)\varphi\left(x_0\right)
\, \overline{\varphi^\prime\left(h^{-1} \cdot x_0\right)}
\\
\times \langle\pi\left(r^\prime g\right)\phi,\pi\left(h\right)\phi^\prime\rangle
\,\chi_\lambda\left(s\left(u^\prime\right) \check{u}_1\right)^{-1}\, dr^\prime\,
dg\, dh \,du^\prime \, du_1
\end{multline}
where $\chi_{U^\circ}$ is the characteristic function of $U^\circ$.
By Lemma~\ref{l: compact}, we may take a compact open subgroup $U_{n-1}^\circ$ of $U_{n-1}$
and  a compact open subgroup $U_0^\circ$ of $U_0$ of the form
\eqref{e: compact 0} so that $U_{n-1}^\circ U_0^\circ$ 
is a compact open subgroup of $U_{n}$ and 
$U_{n-1}^\circ U_0^\circ\supset U^\circ$.
Then \eqref{eq1} is equal to
\begin{multline}\label{eq1-a}
\int_{U_{n-1}^\circ} \int_{U_0^\circ} \int_{R_\lambda^\prime
\backslash G}\int_{R_\lambda^\prime\backslash G}\int_{R_\lambda^\prime}
\omega_\psi\left(\check{u}_1^{-1}s\left(u^\prime\right)^{-1}g,
1\right)\varphi\left(x_0\right)
\, \overline{\varphi^\prime\left(h^{-1} \cdot x_0\right)}
\\
\times \langle\pi\left(r^\prime g\right)\phi,\pi\left(h\right)\phi^\prime\rangle
\,\chi_\lambda\left(s\left(u^\prime\right) \check{u}_1\right)^{-1}\, dr^\prime\,
dg\, dh \,du^\prime \, du_1.
\end{multline}
Since  the argument to obtain \eqref{e: weil hermitian17} ensures 
the absolute convergence of the most inner triple integral
and the outer double integral is over a compact group $U_{n-1}^\circ
U_0^\circ$, the integral \eqref{eq1-a} converges absolutely.
Hence we may change the order of integration
and we obtain
\begin{multline}\label{suff U1}
\mathcal W\left(
\theta\left(\varphi\otimes\phi
\right),
\theta\left(\varphi^\prime\otimes\phi^\prime\right)
\right)
=C_{E \slash F} \cdot \int_{R_\lambda^\prime
\backslash G}\int_{R_\lambda^\prime\backslash G} \int_{R_\lambda^\prime} \int_{U_0^\circ} \int_{U_{n-1}^\circ}
\\
 \left(\omega_\psi\left(g \right)\varphi\right)\left(x_0\right)\overline{ \left(\omega_\psi\left(h \right)\varphi^\prime\right)\left(x_0\right)}
\, \langle\pi\left(r^\prime s(u^\prime) \check{u}_1 g\right)\phi,\pi\left(h\right)\phi^\prime\rangle
\\
\chi_\lambda\left(s\left(u^\prime\right) \check{u}_1\right)^{-1} \, 
du_1\, du^\prime\, dr^\prime 
\,dg\, dh .
\end{multline}
Then  Proposition~\ref{l: main lemma} is reduced to the following lemma.
\begin{lemma}
\label{lem; final step}
Keep the above notation. Then
\begin{multline}\label{e: final step}
\int_{R_\lambda^\prime} \int_{U_0^\circ} \int_{U_{n-1}^\circ}
 \langle\pi\left(r^\prime s(u^\prime)  \check{u}_1g\right)\phi,\pi\left(h\right)\phi^\prime\rangle
\,\chi_\lambda( s(u^\prime)  \check{u}_1)^{-1}\, du_1\,du^\prime \, dr^\prime
\\
=\alpha\left(\pi(g)\phi, \pi(h)\phi^\prime \right).
\end{multline}
\end{lemma}
Assume that \eqref{e: final step} holds.
Then by replacing the the most inner triple integral of \eqref{suff U1}
by $\alpha\left(\pi(g)\phi, \pi(h)\phi^\prime \right)$,
we obtain the equality \eqref{e: weil hermitian13} in
Proposition~\ref{l: main lemma}.
%
%
%
%
%
\subsubsection{Proof of Lemma~\ref{lem; final step}}
\label{s; final step}
Let us prove Lemma~\ref{lem; final step} and complete
 the proof of Proposition~\ref{prp: local equality} in the non-archimedean case.

Since $U_0^\circ$ and $U_{n-1}^\circ$ are compact,
the integral on the left hand side of \eqref{e: final step} converges absolutely
by Remark~\ref{rem; abs conv}.
Hence by noting that $R_\lambda^\prime = D_\lambda S_\lambda^\prime$
where $S_\lambda^\prime$ is as given in \eqref{e: stabilizer1}
and by changing the order of integration, we have
%
\begin{multline}\label{e: integral 1}
\int_{R_\lambda^\prime} \int_{U_0^\circ} \int_{U_{n-1}^\circ}
 \langle\pi\left(r^\prime s(u^\prime)  \check{u}_1g\right)\phi,\pi\left(h\right)\phi^\prime\rangle
\,\chi_\lambda( s(u^\prime)  \check{u}_1)^{-1}\, du_1\,du^\prime \, dr^\prime
\\
=\int_{D_\lambda} \int_{U_{n-1}^\circ}
\int_{U_0^\circ}\int_{S_\lambda^\prime}
 \langle\pi\left(s_0 s\left(u^\prime\right) \check{u}_1t
 g\right)\phi,\pi\left(h\right)\phi^\prime\rangle
\chi_\lambda\left(s_0 s\left(u^\prime\right) \check{u}_1 \right)^{-1}ds_0
\, du^\prime\, du_1\, dt.
\end{multline}
%

Let us define an open subgroup $S^\sharp$ of $S^\prime$ by
\[
S^\sharp:=
\left\{\begin{pmatrix}1_{n-1}&A&B\\0&1_3&A^\prime\\0&0&1_{n-1}\end{pmatrix}
: Ae_\lambda\in \varpi^{-r_1}\mathcal O e_{-1}
+\cdots+\varpi^{-r_{n-1}}\mathcal O e_{-n+1}
\right\}
\]
with $r_i$ given in \eqref{e: compact 0}.
Then by considering a filtration of $S^\sharp$ given by
\[
S_\lambda^\prime\lhd \left(S_1^\prime\cap
S^\sharp\right)\lhd\cdots\lhd \left(S_{n-1}^\prime\cap S^\sharp\right)=S^\sharp
\]
induced from \eqref{e: filtration of S prime}
and by taking \eqref{e: S quotient} into account,
the integral \eqref{e: integral 1}
is equal to 
\begin{equation}\label{e: stable 1}
\int_{D_\lambda} \int_{S^\star}
 \langle\pi\left(st
 g\right)\phi,\pi\left(h\right)\phi^\prime\rangle
\,\chi_\lambda\left(s\right)^{-1}\,ds\, dt.
\end{equation}
Here $S^\star$ is an open subgroup of $S$ given by
$S^\star=\check{U}_{n-1}^\circ S^\sharp $
where $\check{U}_{n-1}^\circ$ is a subgroup
$\left\{\check{u}: u\in U_{n-1}^\circ\right\}$ of $S^{\prime\prime}$.
Hence, 
by the definition \eqref{e: local integral 1} of $\alpha \left(\pi(g)\phi,\pi(h)\phi^\prime \right)$,
it suffices for us to show
\begin{equation}\label{e: at last}
 \int_{S^\star}
 \langle\pi\left(st
 g\right)\phi,\pi\left(h\right)\phi^\prime\rangle
\,\chi_\lambda\left(s\right)^{-1}\,ds
=\int_{S}^{\mathrm{st}}
\langle\pi\left(stg\right)\phi,\pi\left(h\right)\phi^\prime\rangle
\,\chi_\lambda \left(s \right)^{-1}\, ds
\end{equation}
 in order to prove Lemma~\ref{lem; final step}.

We recall that the integrand of \eqref{e: stable 1} has a stable integral
over $S$ by \cite[Proposition~3.1]{Liu}.
Hence
 there exists a compact open subgroup $S^\flat$ of $S$ such that
for any compact open subgroups $S^\circ$ of $S$ containing 
$S^\flat$, we have 
\[
\int_{S}^{\mathrm{st}}
\langle\pi\left(stg\right)\phi,\pi\left(h\right)\phi^\prime\rangle
\,\chi_\lambda \left(s \right)^{-1}\, ds
=
\int_{S^\circ}
\langle\pi\left(stg\right)\phi,\pi\left(h\right)\phi^\prime\rangle
\,\chi_\lambda \left(s \right)^{-1}\, ds.
\]
By taking $U_{n-1}^\circ$ and $r_i$ sufficiently large,
we may suppose that $S^\flat\subset S^\star$.
Then for any compact open subgroups $S^\circ$ of $S$ containing 
$S^\flat$, we have 
\[
\int_{S^\circ\cap S^\star}
\langle\pi\left(stg\right)\phi,\pi\left(h\right)\phi^\prime\rangle
\,\chi_\lambda \left(s \right)^{-1}\, ds
\\
=\int_{S}^{\mathrm{st}}
\langle\pi\left(stg\right)\phi,\pi\left(h\right)\phi^\prime\rangle
\,\chi_\lambda \left(s \right)^{-1}\, ds
\]
 since $S^\circ\cap S^\star$ is a compact open subgroup of $S$ 
containing $S^\flat$.
Let $f$ denote a function on $S$ defined 
by
\[
f\left(s\right):=\chi_{S^\star}\left(s\right)\cdot
\langle\pi\left(stg\right)\phi,\pi\left(h\right)\phi^\prime\rangle
\,\chi_\lambda \left(s \right)^{-1}
\]
for $s\in S$, where $\chi_{S^\star}$ 
denotes the characteristic function of $S^\star$.
Then we have
\begin{align*}
\int_{S^\circ}f\left(s\right)\, ds=&\int_{S^\circ\cap S^\star}
\langle\pi\left(stg\right)\phi,\pi\left(h\right)\phi^\prime\rangle
\,\chi_\lambda \left(s \right)^{-1}\, ds
\\
=&\int_{S}^{\mathrm{st}}
\langle\pi\left(stg\right)\phi,\pi\left(h\right)\phi^\prime\rangle
\,\chi_\lambda \left(s \right)^{-1}\, ds.
\end{align*}
This implies that $f$ has a stable integral over $S$ and we have
\[
\int_{S}^{\mathrm{st}}f\left(s\right)\, ds=\int_{S}^{\mathrm{st}}
\langle\pi\left(stg\right)\phi,\pi\left(h\right)\phi^\prime\rangle
\,\chi_\lambda \left(s \right)^{-1}\, ds.
\]
Hence by applying Remark~\ref{r: stable integral} to $f$, we have
\eqref{e: at last}.
This completes the proof of Lemma~\ref{lem; final step}
and the proof of Proposition~\ref{prp: local equality} in the non-archimedean case.
%
%
\subsubsection{Archimedean case}
Suppose that $F$ is archimedean.
Since  $\mathcal{X}_\lambda$ is locally closed in $V^n$, 
the function $R_\lambda^\prime \backslash G \ni g \mapsto \varphi(g^{-1} \cdot x_0 )$
is compactly  supported for any $\varphi \in C_c^\infty(V^n)$. Therefore, by  Liu~\cite[Proposition~3.5]{Liu}, 
the integral 
\begin{equation}\label{e: archimedean}
\int_{R_\lambda^\prime \backslash G} \int_{R_\lambda^\prime \backslash G}
\int_{D_\lambda}
\int_{S}
\langle\pi\left(st g\right)\phi,\pi\left(h\right)\phi^\prime\rangle
\,\chi_\lambda \left(s \right)^{-1}\, ds \,dt \, dg \, dh
\end{equation}
converges absolutely.
Then we may change the order of integration in \eqref{e: archimedean}
and, by an argument
similar to the one in \ref{adapt Fu} and \ref{s; final step} in the non-archimedean case, 
we may show that the integral \eqref{e: archimedean} is equal to the right hand side of
\eqref{e: weil hermitian17}.
Then the equality \eqref{e: weil hermitian13} readily follows
and Proposition~\ref{l: main lemma} is proved also in the archimedean case.
%
%
%
\subsubsection{Corollary of Proposition~\ref{l: main lemma}}
We note the following,
which is a local counterpart of
\cite[Proposition~2 and 3]{FM0},
as a corollary of 
Proposition~\ref{l: main lemma}.
%
\begin{corollary}\label{c: genericity}
Let $\pi$ be an irreducible unitary representation of $G$.
Suppose that $\pi$ is tempered when $F$ is non-archimedean and 
$\pi$ is a discrete series representation when $F$
is archimedean.
Then for 
$\sigma=\theta\left(\pi,\psi\right)$, we have
\begin{equation}\label{e: theta genericity}
\mathrm{Hom}_N\left(\sigma,\psi_\lambda\right)\ne
\left\{0\right\}
\Longleftrightarrow
\alpha\not\equiv 0.
\end{equation}
\end{corollary}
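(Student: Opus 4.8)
The plan is to deduce Corollary~\ref{c: genericity} directly from the local pull-back identity~\eqref{e: weil hermitian13} in Lemma~\ref{l: main lemma}, which we have proved without assuming $\alpha(\phi,\phi^\prime)\ne 0$. The two implications in~\eqref{e: theta genericity} are handled separately, and both amount to translating non-vanishing of one side of~\eqref{e: weil hermitian13} into non-vanishing of the other, together with the two multiplicity-one statements~\eqref{uniqueness} and~\eqref{e: metaplectic mult one}.

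First I would prove the implication ``$\alpha\not\equiv 0 \Rightarrow \mathrm{Hom}_N(\sigma,\psi_\lambda)\ne\{0\}$''. Choose $\phi,\phi^\prime\in V_\pi$ with $\alpha(\phi,\phi^\prime)\ne 0$; replacing $\phi^\prime$ by $\phi$ via~\eqref{e: multiplicity one2} (which applies since the right-hand condition of~\eqref{e: multiplicity one} is equivalent to it, and in the archimedean case we may invoke~\cite{JSZ} for uniqueness and argue as in the text), we may assume $\alpha(\phi,\phi)\ne 0$. Then by Lemma~\ref{l: non-vanishing} the linear form $\ell_\phi$ on $V_\sigma$ defined by~\eqref{e: weil hermitian8} is non-zero, and by~\eqref{e: weil hermitian9} it lies in $\mathrm{Hom}_N(\sigma,\psi_\lambda)$; hence that space is non-zero. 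Alternatively one can argue directly from~\eqref{e: weil hermitian11}: pick $\varphi\in\mathcal S(V^n)$ with $W(\varphi;\phi)\ne 0$ (possible by Lemma~\ref{l: non-vanishing} applied to $\mathcal V(\phi,\phi;\varphi)=\alpha(\phi,\phi)W(\varphi;\phi)$), so that $\mathcal W(\theta(\varphi\otimes\phi),\theta(\varphi\otimes\phi))=C_{E/F}\,\alpha(\phi,\phi)\,|W(\varphi;\phi)|^2\ne 0$, and since $\mathcal W$ is built from the stable integral against $\psi_\lambda$, its non-vanishing forces $\mathrm{Hom}_N(\sigma,\psi_\lambda)\ne\{0\}$.

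For the converse ``$\mathrm{Hom}_N(\sigma,\psi_\lambda)\ne\{0\}\Rightarrow\alpha\not\equiv 0$'', I would argue contrapositively: suppose $\alpha\equiv 0$ on $V_\pi$. Then for all $\phi,\phi^\prime,\varphi,\varphi^\prime$ the inner integrand $\alpha(\pi(h)\phi,\pi(h^\prime)\phi^\prime)$ on the right-hand side of~\eqref{e: weil hermitian13} vanishes identically, so $\mathcal W(\theta(\varphi\otimes\phi),\theta(\varphi^\prime\otimes\phi^\prime))=0$ for all such data. Since $\sigma=\theta(\pi,\psi)$ is generated by the vectors $\theta(\varphi\otimes\phi)$ and $\mathcal W$ is $N\times N$-equivariant against $\psi_\lambda\times\psi_\lambda^{-1}$ by~\eqref{e: weil hermitian5}, the vanishing of $\mathcal W$ on all pairs of such vectors, combined with the uniqueness of the $\psi_\lambda$-Whittaker functional~\eqref{e: metaplectic mult one}, forces $\mathrm{Hom}_N(\sigma,\psi_\lambda)=\{0\}$: indeed any non-zero $\ell\in\mathrm{Hom}_N(\sigma,\psi_\lambda)$ would, by uniqueness, make $\mathcal W$ proportional to $\ell\otimes\bar\ell$ up to a non-zero constant, contradicting $\mathcal W\equiv 0$ unless $\ell=0$.

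The main obstacle is ensuring that Lemma~\ref{l: main lemma} applies in exactly this generality --- in particular that~\eqref{e: weil hermitian13} holds for the unitary (tempered, resp.\ discrete series) $\pi$ of the corollary without the global hypotheses of Section~\ref{ss: set up}, and that all the convergence and stable-integral manipulations in its proof (Lemma~\ref{l: integral formula1}, the change of order of integration, and the archimedean absolute convergence via~\cite[Proposition~3.5]{Liu}) are valid under these local assumptions alone. Granting that, the deduction of~\eqref{e: theta genericity} is a formal consequence of~\eqref{e: weil hermitian13}, Lemma~\ref{l: non-vanishing}, and the two multiplicity-one results; the only care needed is the bookkeeping with the stable integral defining $\mathcal W$ and the fact that $\theta(\pi,\psi)$ is by construction spanned by the images $\theta(\varphi\otimes\phi)$, so that non-vanishing of $\mathcal W$ on these spanning vectors is equivalent to non-vanishing of the $\psi_\lambda$-Whittaker pairing on $\sigma$.
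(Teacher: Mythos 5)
Your forward direction ($\alpha\not\equiv 0\Rightarrow\mathrm{Hom}_N(\sigma,\psi_\lambda)\ne\{0\}$) is correct and essentially matches the paper: reduce to $\alpha(\phi,\phi)\ne 0$ and then either exhibit $\ell_\phi$ directly via Lemma~\ref{l: non-vanishing} and \eqref{e: weil hermitian9}, or use \eqref{e: weil hermitian10}/\eqref{e: weil hermitian11} to show $\mathcal W\ne 0$. Your caveat about checking that Lemma~\ref{l: main lemma} is genuinely local (no global hypotheses from \S\ref{ss: set up}) is a fair point and is exactly as the paper implicitly assumes.

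The backward direction, however, has a genuine gap. You argue contrapositively that $\alpha\equiv 0\Rightarrow\mathcal W\equiv 0$ via \eqref{e: weil hermitian13} (correct), and then claim that $\mathcal W\equiv 0$ forces $\mathrm{Hom}_N(\sigma,\psi_\lambda)=\{0\}$ because ``any non-zero $\ell\in\mathrm{Hom}_N(\sigma,\psi_\lambda)$ would, by uniqueness, make $\mathcal W$ proportional to $\ell\otimes\bar\ell$ up to a non-zero constant.'' That last clause does not follow from uniqueness: multiplicity one only gives $\mathcal W=c\cdot\ell\otimes\bar\ell$ for \emph{some} $c\in\mathbb C$, and nothing formal rules out $c=0$. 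The statement that $\mathrm{Hom}_N(\sigma,\psi_\lambda)\ne\{0\}$ implies the specific stable integral $\mathcal W$ (of the $\widetilde{\mathrm{Sp}}_n$-invariant form $\mathcal B_\sigma$ against $\psi_\lambda^{-1}$) is not identically zero is a substantive theorem: it is Lapid and Mao, \cite[Proposition~2.10]{LM0}, and it is exactly the ingredient the paper invokes at this point. You need to cite it (or reproduce an equivalent argument); without it, the implication $\mathcal W\equiv 0\Rightarrow\mathrm{Hom}_N=\{0\}$ is unjustified.
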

%
\begin{proof}
By Lapid and Mao~\cite[Proposition~2.10]{LM0},
$\mathrm{Hom}_N\left(\sigma,\psi_\lambda\right)\ne
\left\{0\right\}$ implies that $\mathcal W$ defined by \eqref{e: weil hermitian4}
is not identically zero.
Then \eqref{e: weil hermitian13} clearly implies that
$\alpha$ is not identically zero.
Conversely suppose that there exist $\phi,\phi^\prime\in V_\pi$
such that $\alpha\left(\phi,\phi^\prime\right)\ne 0$.
Then by 
Lemma~\ref{l: non-vanishing} and Proposition~\ref{p: local equality},
$\mathcal W$ is not identically zero
and it clearly implies that $\sigma$ is $\psi_\lambda$-generic,
i.e. $\mathrm{Hom}_N\left(\sigma,\psi_\lambda\right)\ne
\left\{0\right\}$, by \eqref{e: weil hermitian5}.
\end{proof}
%
%
%
%
%
%
%
%
%
%
\subsection{Proof of the statement (1) of Theorem~\ref{t: main theorem}}
\label{ss: local GGP real}
%
We return to the global setting.
As we noted in
\eqref{e: theta lift},  $\sigma=\Theta_n\left(\pi,\psi\right)$ is $\psi_\lambda$-generic when $B_{\lambda,\psi}\not\equiv 0$.
Hence its local component $\sigma_v$ is $\psi_{\lambda,v}$-generic at
every place $v$ of $F$.
Thus at any place $v$ of $F$, $\alpha_v$ does not vanish identically 
by Corollary~\ref{c: genericity}.
%
%
%
%
%
%
%
%
%
%
\section{Proof of Corollary~\ref{maincor}}\label{s: maincor}
%
%
By Theorem~\ref{t: main theorem}, it is enough for us to
show that
the right hand side of \eqref{e: main identity}
vanishes identically when $B_{\lambda,\psi}\equiv 0$.
Suppose on the contrary.
Then in particular  $L\left(1/2,\pi\right)L\left(1/2,\pi\times\chi_E\right)\ne 0$.
By the assumption that Conjecture~9.5.4 in Arthur~\cite{Ar} holds for
any group in $\mathcal G$, 
$\pi$ has a weak lift to $\mathrm{GL}_{2n}(\mA)$.
Then the global descent method by Ginzburg, Rallis and Soudry~\cite{GRS11} 
gives an irreducible cuspidal globally generic automorphic representation
$\pi^\circ$ of $\mathbb G\left(\mA\right)$ which is nearly equivalent to $\pi$.
Thus Proposition~5 in \cite{FM0} is applicable to $\pi$.
Hence there exist 
$G^\prime=\mathrm{SO}\left(V^\prime\right)\in\mathcal G$
where $\mathrm{disc}\left(V^\prime\right)=\left(-1\right)^n$
and an irreducible cuspidal automorphic representation 
$\pi^\prime$ of $G^\prime\left(\mA\right)$ 
having the special Bessel model of type $E$,
which is nearly equivalent
to $\pi$.
We shall reach a contradiction by
showing that $G=G^\prime$ and $\pi=\pi^\prime$.

Since $B_{\lambda,\psi}\not\equiv 0$ on
$V_{\pi^\prime}$, 
$\Theta_n\left(\pi^\prime,\psi\right)$,
the theta lift of $\pi^\prime$ to $\widetilde{\mathrm{Sp}}_{n}(\mA)$ with respect to $\psi$
is $\psi_\lambda$-generic by \cite[Proposition~2]{FM0}.
In particular $\theta\left(\pi^\prime_v,\psi_v\right)$
is $\psi_{\lambda, v}$-generic for any $v$.
On the other hand, we have $\alpha_v\not\equiv 0$  on $V_{\pi_v}$
since the right hand side of \eqref{e: main identity} is not identically zero.
Hence $\theta\left(\pi_v,\psi_v\right)$ is also $\psi_{\lambda,v}$-generic
for any $v$ by 
Corollary~\ref{c: genericity}.

Suppose that $v$ is finite.
Since $\pi$ and $\pi^\prime$ are nearly equivalent,
it is readily shown that they have the same $A$-parameter 
by an argument  similar to the one  in Atobe and 
Gan~\cite{AG}.
Further the temperedness of $\pi$ implies that
$\pi$ and $\pi^\prime$ share the same local $L$-parameter
at each finite place.
Here we recall the assumption that the local Langlands correspondence \cite[Conjecture~9.4.2]{Ar} holds for any element of $\mathcal{G}$.
Since $\pi_v$ and $\pi^\prime_v$ both have the special
Bessel model of type $E_v$, 
we have $G_v\simeq G^\prime_v$ and
$\pi_v\simeq\pi^\prime_v$ by Waldspurger~\cite{Wa1, Wa2}.

When $v$ is real, $\theta\left(\pi_v,\psi_v\right)$
and $\theta\left(\pi^\prime_v,\psi_v\right)$ have the same $L$-parameter
by Adams and Barbasch~\cite{AB}.
Then we have $\theta\left(\pi_v,\psi_v\right)\simeq
\theta\left(\pi^\prime_v,\psi_v\right)$ by the uniqueness
of generic element in tempered $L$-packets
(see Kostant~\cite{Ko}, Shelstad~\cite{Sh}
and Vogan~\cite{Vo}).
Since $V$ and $V^\prime$ have the same discriminant,
we have $G_v\simeq G^\prime_v$ by \cite{AB}.
Hence by the Howe duality, we have $\pi_v\simeq\pi^\prime_v$.

Thus we have shown that $G_v\simeq G^\prime_v$ and $\pi_v\simeq \pi^\prime_v$
for any place $v$ of $F$.
Hence we have $G=G^\prime$ and $\pi\simeq\pi^\prime$.
The latter actually implies that $\pi=\pi^\prime$ since the multiplicity of $\pi$
is one by Arthur~\cite[Conjecture~9.5.4]{Ar}.
%
%
%
%
%
%
%
%
%

%
%
%
%
%
%
%
%
%
%
%
%
%
%
\end{document}